\newtheorem{Theorem}{Theorem}[section]
\newtheorem{Proposition}[Theorem]{Proposition}
\newtheorem{Lemma}[Theorem]{Lemma}
\newtheorem{Claim}[Theorem]{Claim}
\newtheorem{Definition-Proposition}[Theorem]{Definition-Theorem}
\newtheorem{Main Conjecture}[Theorem]{Main Conjecture}
\newtheorem{Conjecture}[Theorem]{Conjecture}
\theoremstyle{remark}
\newtheorem{Example}[Theorem]{Example}
\newtheorem{Remark}[Theorem]{Remark}
\newcommand{\caO}{\mathcal{O}}
\newcommand{\field}{\mathbb}
\newcommand{\ga}{\alpha}
\newcommand{\C}{{\field C}}
\newcommand{\Z}{{\field Z}}
\theoremstyle{plain}
\newtheorem{Question}{Question}
\newcommand{\excise}[1]{}%{$\star$\textsc{#1}$\star$}
\newcommand{\comment}[1]{$\star${\sf\textbf{#1}}$\star$}
\newcommand{\cellsize}{11}
\newlength{\cellsz} \setlength{\cellsz}{\cellsize\unitlength}
\newsavebox{\cell}
\sbox{\cell}{\begin{picture}(\cellsize,\cellsize)
\put(0,0){\line(1,0){\cellsize}}
\put(0,0){\line(0,1){\cellsize}}
\put(\cellsize,0){\line(0,1){\cellsize}}
\put(0,\cellsize){\line(1,0){\cellsize}}
\end{picture}}
\newcommand\cellify[1]{\def\thearg{#1}\def\nothing{}%
\ifx\thearg\nothing
\vrule width0pt height\cellsz depth0pt\else
\hbox to 0pt{\usebox{\cell} \hss}\fi%
\vbox to \cellsz{
\vss
\hbox to \cellsz{\hss$#1$\hss}
\vss}}
\newcommand\tableau[1]{\vtop{\let\\\cr
\baselineskip -16000pt \lineskiplimit 16000pt \lineskip 0pt
\ialign{&\cellify{##}\cr#1\crcr}}}
\newcommand{\kellsize}{24}
\newlength{\kellsz} \setlength{\kellsz}{\kellsize\unitlength}
\newsavebox{\kell}
\sbox{\kell}{\begin{picture}(\kellsize,\kellsize)
\put(0,0){\line(1,0){\kellsize}}
\put(0,0){\line(0,1){\kellsize}}
\put(\kellsize,0){\line(0,1){\kellsize}}
\put(0,\kellsize){\line(1,0){\kellsize}}
\end{picture}}
\newcommand\kellify[1]{\def\thearg{#1}\def\nothing{}%
\ifx\thearg\nothing
\vrule width0pt height\kellsz depth0pt\else
\hbox to 0pt{\usebox{\kell} \hss}\fi%
\vbox to \kellsz{
\vss
\hbox to \kellsz{\hss$#1$\hss}
\vss}}
\newcommand\ktableau[1]{\vtop{\let\\\cr
\baselineskip -16000pt \lineskiplimit 16000pt \lineskip 0pt
\ialign{&\kellify{##}\cr#1\crcr}}}
\newcommand{\sellsize}{63}
\newlength{\sellsz} \setlength{\sellsz}{\sellsize\unitlength}
\newsavebox{\sell}
\sbox{\sell}{\begin{picture}(\sellsize,20)
\put(0,0){\line(1,0){\sellsize}}
\put(0,0){\line(0,1){\sellsize}}
\put(\sellsize,0){\line(0,1){\sellsize}}
\put(0,\sellsize){\line(1,0){\sellsize}}
\end{picture}}
\newcommand\sellify[1]{\def\thearg{#1}\def\nothing{}%
\ifx\thearg\nothing
\vrule width0pt height\sellsz depth0pt\else
\hbox to 0pt{\usebox{\sell} \hss}\fi%
\vbox to \sellsz{
\vss
\hbox to \sellsz{\hss$#1$\hss}
\vss}}
\newcommand\stableau[1]{\vtop{\let\\\cr
\baselineskip -16000pt \lineskiplimit 16000pt \lineskip 0pt
\ialign{&\sellify{##}\cr#1\crcr}}}
\begin{document}
\pagestyle{plain}
\mbox{}
%\title{Polynomials for $GL(p,\C) \times GL(q,\C)$-orbit closures in the flag va%riety}
\title{Polynomials for $GL_p\times GL_q$ orbit closures in the flag variety}
\author{Benjamin J.~Wyser}
\author{Alexander Yong}
\address{Department of Mathematics, University of Illinois at
Urbana-Champaign, Urbana, IL 61801, USA}
\email{bwyser@uiuc.edu, ayong@uiuc.edu}
\date{March 4, 2014}
\maketitle

\begin{abstract}
The subgroup $K=GL_p \times GL_q$ of $GL_{p+q}$ 
acts on the (complex) flag variety $GL_{p+q}/B$ with finitely many orbits. We introduce a family of polynomials specializing
to representatives for cohomology classes of the orbit
closures in the Borel model.  We define and study 
$K$-orbit determinantal ideals to support the geometric naturality of these representatives.
Using a modification of these ideals, we describe an analogy between two local singularity measures:
the $H$-polynomials and the Kazhdan-Lusztig-Vogan polynomials.
\end{abstract}

\tableofcontents

\newpage 

\section{Introduction}

\subsection{Polynomial representatives in ordinary cohomology}
Consider the Levi subgroup $K=GL_p \times GL_q$ of $GL_n$ ($n=p+q$).  (Throughout, we consider only
complex general linear groups.)
By a general result of T.~Matsuki \cite[Theorem 3]{Matsuki}, the flag variety $GL_n/B$ decomposes as a disjoint
union of finitely many $K$-orbits:
\[GL_n/B=\coprod_{\gamma} {\mathcal O}_{\gamma}.\]
The orbits ${\mathcal O}_{\gamma}$ are parameterized by
{\bf $(p,q)$-clans} $\gamma$, as described first by T.~Matsuki-T.~Oshima \cite[Theorem 4.1]{Matsuki-Oshima}, and later elaborated
upon by A.~Yamamoto \cite[Theorem 2.2.8]{Yamamoto}.  These clans are partial matchings of vertices $\{1,2,\ldots,n\}$, where
unpaired vertices are assigned $+$ or $-$; the difference in the number of $+$'s and $-$'s must be $p-q$.
Let ${\tt Clans}_{p,q}$ denote the set of all such clans.  Three clans from ${\tt Clans}_{6,4}$ are shown
below:

\[
\begin{picture}(280,10)
\put(0,0){\epsfig{file=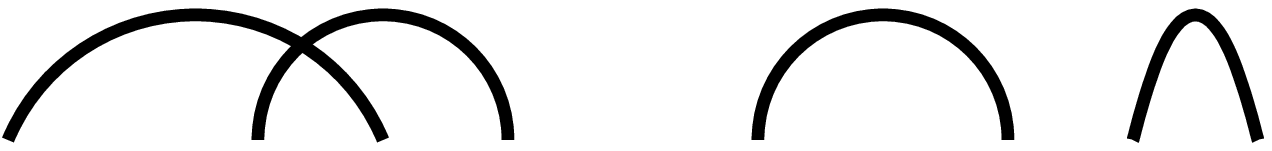, height=.8cm}}
\put(8,0){$+$}
\put(17,0){$-$}
\put(26,0){$+$}
\put(43,0){$+$}
\put(64,0){$-$}
\put(83,0){$+$,}

\put(104,0){$+$}
\put(120,0){$-$}
\put(130,0){$+$}
\put(139,0){$-$}
\put(159,0){$+$}
\put(177,0){$+$}
\put(196,0){,}
\put(210,0){$-\!+\!+\!+\!-\!+\!-\!-\!+\!+$}
\end{picture}
\]

Let $Y_{\gamma}$ be the Zariski closure of ${\mathcal O}_{\gamma}$. This is the union of ${\mathcal O}_{\beta}$ for $\beta\prec \gamma$,
where (by definition) $\prec$ is the {\bf closure order} on clans.
It is an irreducible variety.  By the formula of \cite[Proposition 2.3.8]{Yamamoto}, its dimension is 
${p\choose 2}+{q\choose 2}+\ell(\gamma)$ where
\begin{equation}
\label{eqn:Yam}
\ell(\gamma)=\sum_{\mbox{\tiny{vertices $i<j$ are matched}}}
j-i-\#\{\mbox{matchings of $s<t$ where $s<i<t<j$}\}.
\end{equation}

$Y_{\gamma}$ admits a class in singular cohomology (with $\Z$ coefficients):
\[[Y_{\gamma}]\in H^{\star}(GL_n/B)\cong {\mathbb Z}[x_1,\ldots,x_n]/I^{S_n},\]
where $I^{S_n}$ is the ideal generated by symmetric polynomials without constant term.  The above isomorphism, due to A.~Borel \cite{Borel} (cf. \cite[Section~10.2]{Fulton}),
is suggestive of the following problem:

\begin{quote}
Describe a choice of polynomial representatives $\{\Upsilon_{\gamma}\}$ 
for the cosets associated to
$\{[Y_{\gamma}]\}$ under Borel's isomorphism.
\end{quote}

One solution begins by assigning polynomials to the 
${n\choose p}$-many {\bf closed orbits}.
These orbits are indexed by {\bf matchless clans} $\tau$, i.e., those consisting of $p$ many $+$'s and $q$ many $-$'s (the third displayed
clan above is an example).  We will typically use $\tau$ to denote a matchless clan, and $\gamma$ to indicate an
arbitrary clan.
The {\bf divided difference operator} $\partial_i:{\mathbb Z}[x_1,\ldots,x_n]\to {\mathbb Z}[x_1,\ldots,x_n]$ is
\[\partial_i f=\frac{f-f^{s_i}}{x_i-x_{i+1}}.\]
Representatives for all
other orbits can be obtained by recursion using the $\partial_i$'s along a choice of path 
in \emph{weak order} (defined in Section \ref{sec:defs}).  
This approach was used by the first author in \cite{Wyser-13a}.

We consider a different choice of polynomial representatives for the closed orbits than that found in \emph{loc.~cit}.
From our perspective, this alternative choice of representatives is preferable for the following reasons:
\begin{itemize}
\item It is provably ``self-consistent'', by which we mean that each $\Upsilon_{\gamma}$ is a well-defined polynomial.
Specifically, $\Upsilon_{\gamma}$ depends neither on the choice of closed $K$-orbit $\caO_{\tau}$
at which we start the recurrence, nor on the aforementioned 
choice of path in weak order.
%(see Section~2.1) used to define the sequence of divided difference operators a%pplied to move from $[\caO_{\tau}]$ to $[Y_{\gamma}]$.
 \item Each $\Upsilon_{\gamma}$ has nonnegative integer coefficients, and in many cases the geometric reason for this
is transparent.
 \item Our choice extends simply to $T$-equivariant cohomology and ($T$-equivariant) 
$K$-theory, where $T$ is the torus of diagonal matrices in $GL_n$.  (\cite{Wyser-13a} covers the case of
$T$-equivariant cohomology, but neither ordinary nor $T$-equivariant $K$-theory are discussed.)  We mostly 
suppress discussion of these refinements until Section~2.
\end{itemize}

To formulate our answer, we associate to a matchless $(p,q)$-clan $\tau$ a partition, which we will denote $\lambda(\tau)$. We will also associate a sequence of nonnegative integers denoted by ${\vec f}(\tau)$; this sequence is called
a ``flagging'' in the context that we will use it below.
%(A flagging for a partition $\lambda$ with $m$ parts is simply an %$m$-tuple
%$(f_1,\hdots,f_m)$ of positive integers.  Its purpose is to restrict %attention to a subset of the semistandard Young
%tableaux on the shape $\lambda$, namely those with entries in row %$i$ at most $f_i$.)

The partition $\lambda(\tau)$ is formed as follows.  Start from the upper-right corner of a $p \times q$
rectangle, and trace a lattice path to the lower-left corner, by moving down at step $i$ if the $i$th character
of $\tau$ is a $+$, and left if it is a $-$.  Then $\lambda(\tau)$ is the partition whose Young diagram is the
portion of the $p \times q$ rectangle northwest of this path.  Clearly,
the assignment
of $\lambda(\tau)$ to $\tau$ defines a bijection between matchless $(p,q)$-clans (or, equivalently, $p$-element
subsets of $\{1,\hdots,n\}$) and partitions whose Young diagrams fit within a $p \times q$ rectangle.

Now, ${\vec f}(\tau) = (f_1,\hdots,f_p)$ for $\lambda(\tau)$ is defined by $f_i = \text{index of $i$th
$+$ of $\tau$}$.

Next, let $\widehat\tau$ denote the $(q,p)$-clan obtained from $\tau$ by flipping all signs.  Then we can also 
form the partition $\lambda(\widehat\tau)$ and the flagging ${\vec f}(\widehat\tau)$, as described above.
Note that this partition has $q$ parts, and its flagging is a $q$-tuple.

As an example, if $\tau=++--+-++$ then $\lambda(\tau)=(3,3,1,0,0)$ and ${\vec f}(\tau)=(1,2,5,7,8)$, while
$\lambda(\widehat\tau)=(3,3,2)$ and ${\vec f}(\widehat\tau)=(3,4,6)$.  The relevant pictures are as follows:

\begin{figure}[h]
\begin{picture}(220,75)
\thinlines
\put(0,0){\makebox[0pt][l]{\framebox(45,75)}}
\put(15,0){\line(0,1){75}}
\put(30,0){\line(0,1){75}}
\put(0,15){\line(1,0){45}}
\put(0,30){\line(1,0){45}}
\put(0,45){\line(1,0){45}}
\put(0,60){\line(1,0){45}}

\linethickness{2pt}
\put(45,76){\line(0,-1){30}}
\put(46,45){\line(-1,0){30}}
\put(16,46){\line(0,-1){16}}
\put(17,30){\line(-1,0){17}}
\put(0,31){\line(0,-1){32}}

\put(47,66){$+$}
\put(47,51){$+$}
\put(33,38){$-$}
\put(18,38){$-$}
\put(17,31){$+$}
\put(3,23){$-$}
\put(2,16){$+$}
\put(2,4){$+$}

\put(62,65){$1$}
\put(62,50){$2$}
\put(62,35){$5$}
\put(62,20){$7$}
\put(62,5){$8$}

%%% Picture to right

\thinlines
\put(120,15){\makebox[0pt][l]{\framebox(75,45)}}
\put(135,15){\line(0,1){45}}
\put(150,15){\line(0,1){45}}
\put(165,15){\line(0,1){45}}
\put(180,15){\line(0,1){45}}
\put(120,30){\line(1,0){75}}
\put(120,45){\line(1,0){75}}

\linethickness{2pt}
\put(196,60){\line(-1,0){30}}
\put(166,61){\line(0,-1){31}}
\put(166,31){\line(-1,0){16}}
\put(151,31){\line(0,-1){16}}
\put(151,16){\line(-1,0){31}}
\put(185,52){$-$}
\put(170,52){$-$}
\put(167,46){$+$}
\put(167,33){$+$}
\put(154,22){$-$}
\put(152,16){$+$}
\put(138,7){$-$}
\put(123,7){$-$}
\put(200,50){$3$}
\put(200,35){$4$}
\put(200,20){$6$}
\end{picture}
\caption{$\lambda(\tau),{\vec f}(\tau)$ and 
$\lambda(\widehat \tau),{\vec f}(\widehat \tau)$ for $\tau=++--+-++$.} 
\end{figure}
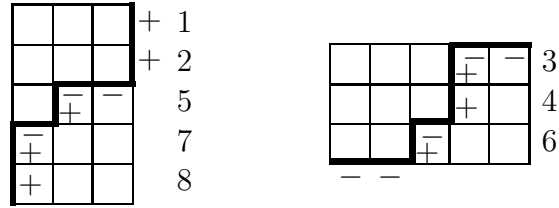

Now, given any partition $\lambda=(\lambda_1\geq\lambda_2\geq\cdots\geq \lambda_m\geq 0)$ and a sequence of nonnegative integers
$\vec f=(f_1,\ldots,f_m)$ (a flagging), one defines the {\bf flagged Schur polynomial} to be
\[s_{\lambda,\vec f}(X)=\sum_T {\bf x}^T,\]
where the sum is over all semistandard tableaux $T$ of shape $\lambda$ whose entries in row
$i$ are weakly bounded above by $f_i$; see \cite[Section~2.6]{Manivel} for a textbook treatment of flagged Schur polynomials. So considering the partition $\lambda(\tau)=(3,3,1,0,0)$ and the flagging
${\vec f}(\tau) = (1,2,5,7,8)$ coming from the clan $\tau$ in our example, 
\[s_{(3,3,1,0,0),(1,2,5,7,8)}(x_1,x_2,x_3,x_4,x_5)=x_1^3 x_2^3 x_3+x_1^3 x_2^3 x_4 + x_1^3 x_2^3 x_5.\]
The three monomials correspond to the tableaux
\[\tableau{1 & 1& 1\\ 2& 2&2 \\3}, \ \ \  
\tableau{1 & 1& 1\\ 2& 2&2 \\4} \ \ \ \mbox{and  \ \ \ }
\tableau{1 & 1& 1\\ 2& 2&2 \\5}.
\]
Since $\lambda(\tau)$ and ${\vec f}(\tau)$ are determined by $\tau$,
one can use the abbreviation ${\mathfrak s}_{\tau}(X) = s_{\lambda(\tau),{\vec f}(\tau)}(X)$, and similarly define
${\mathfrak s}_{\widehat\tau}(X)$.  For matchless $\tau$, define:
\begin{equation}
\label{eqn:defformatchless}
\Upsilon_{\tau}:={\mathfrak s}_{\tau}(X) \cdot {\mathfrak s}_{\widehat\tau}(X).
\end{equation}
Given a clan $\gamma$ which is not matchless, by \cite[Theorem 4.6]{Richardson-Springer} there exists a matchless
clan $\tau$ and a sequence $s_1,\hdots,s_l$
of simple transpositions such that $\gamma = s_1 \cdot s_2 \cdot \hdots s_l \cdot \tau$.  (This notation 
is explained in Section \ref{sec:defs}.)  In general, neither $\tau$ nor the permutation
$w=s_1 \hdots s_l$ is uniquely determined by $\gamma$.  Our wish is to define
\[ \Upsilon_{\gamma}=\partial_1 \hdots \partial_l \Upsilon_{\tau}. \]
However, in light of the preceding sentence, it is not at all clear that this is a valid ``definition''.  The main
purpose of this paper is to present the following (and its refinements):
\begin{Theorem}
\label{thm:intro}
Each $\Upsilon_{\gamma}$ is well-defined and represents $[Y_{\gamma}]$ under Borel's isomorphism. 
\end{Theorem}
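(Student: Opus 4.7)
The plan is to decompose Theorem \ref{thm:intro} into two tasks: a base case verifying that $\Upsilon_\tau$ represents $[Y_\tau]$ for each matchless clan $\tau$, and a recursive step showing that the divided difference recursion yields a well-defined polynomial that represents $[Y_\gamma]$ for arbitrary $\gamma$.

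For the base case, I would exploit the geometric description of the closed $K$-orbit $Y_\tau$. Via the $p$-subset encoded by $\tau$, it is isomorphic to an embedded copy of $GL_p/B_p \times GL_q/B_q$ inside $GL_n/B$, and its cohomology class decomposes as a product of two Schubert-type classes, one indexed by $\tau$ and the other by $\widehat\tau$. The key algebraic identification is that the flagged Schur polynomial $\mathfrak{s}_\tau(X)$ equals the Schubert polynomial of the Grassmannian permutation whose descent set is determined by the flagging $\vec f(\tau)$, and analogously for $\mathfrak{s}_{\widehat\tau}(X)$. The product $\Upsilon_\tau = \mathfrak{s}_\tau(X) \cdot \mathfrak{s}_{\widehat\tau}(X)$ then represents the product of the two corresponding Schubert classes, which is $[Y_\tau]$.

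For the recursion, I would establish the geometric identity $\partial_i [Y_{\gamma'}] = [Y_\gamma]$ whenever $\gamma' = s_i \cdot \gamma$ in the Richardson--Springer monoid action with $\ell(\gamma') = \ell(\gamma)+1$. This follows from analyzing the fiber-bundle projection $\pi_i : GL_n/B \to GL_n/P_i$: the generic $\Pone$-fiber over $\pi_i(Y_\gamma)$ sweeps out $Y_{\gamma'}$ and meets $Y_\gamma$ transversely, so pushforward followed by pullback realizes $\partial_i$ on cohomology classes. Combined with the base case, this immediately shows that any polynomial produced by the recursion represents $[Y_\gamma]$ \emph{modulo} $I^{S_n}$.

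The main obstacle is upgrading this congruence to true equality: the $\Upsilon_\gamma$ must be independent, as a polynomial in $\Z[x_1,\ldots,x_n]$, of the chosen matchless $\tau$ and the chosen word $s_1 \cdots s_l$. For fixed $\tau$, the length constraint forces $s_1\cdots s_l$ to be a reduced expression for a unique Weyl group element acting on $\tau$, so equality of the resulting polynomials reduces to the braid and commutation identities $\partial_i\partial_{i+1}\partial_i = \partial_{i+1}\partial_i\partial_{i+1}$ and $\partial_i \partial_j = \partial_j \partial_i$ ($|i-j|\geq 2$), applied to $\Upsilon_\tau$. The harder case, where two distinct matchless clans $\tau \neq \tau'$ each reach $\gamma$, would be handled by producing a choice-free description of $\Upsilon_\gamma$---for instance, as a sum of monomials indexed by a combinatorial object intrinsic to $\gamma$ (an analogue of the semistandard tableaux for $\mathfrak{s}_\tau$)---and then verifying that this formula satisfies both the base case \eqref{eqn:defformatchless} and the divided difference recursion. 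Verifying this combinatorial identity, and ensuring it interacts correctly with the factored shape of $\Upsilon_\tau$ coming from the pair $(\lambda(\tau),\lambda(\widehat\tau))$, is the technically delicate heart of the proof.
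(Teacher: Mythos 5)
Your base case and the push--pull step are broadly in line with the paper, modulo small slips: the identity should read $\partial_i[Y_\gamma]=[Y_{s_i\cdot\gamma}]$ when $s_i\cdot\gamma\succ\gamma$ (you have the direction reversed), the permutations attached to the flagged Schur factors are vexillary and \emph{inverse} to Grassmannian rather than Grassmannian, and the product formula for $[Y_\tau]$ is justified in the paper by the identification of non-crossing orbit closures with Richardson varieties $X_{v(\gamma)}^{w_0u(\gamma)}$, which your sketch would still need. The genuine gap is in the well-definedness argument. First, your claim that for a fixed matchless $\tau$ the length constraint forces $s_1\cdots s_l$ to be a reduced word of a unique Weyl group element is false: in the weak order on clans, minimal chains from the same $\tau$ to the same $\gamma$ can be reduced words of \emph{different} permutations --- the paper stresses exactly this point when contrasting with the Schubert case --- so the braid and commutation relations for the $\partial_i$ do not settle even the fixed-$\tau$ case. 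Second, for distinct matchless clans $\tau\neq\tau'$ you defer to a ``choice-free combinatorial description'' of $\Upsilon_\gamma$ that you do not produce; such an intrinsic monomial formula is essentially Question~\ref{question:C} of the paper and is open, so the step you yourself call the heart of the proof is missing.

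What actually closes the gap, and what your outline lacks, is a uniqueness mechanism that makes any combinatorial identity unnecessary: by Proposition~\ref{prop:Snonly} the initial products $\Upsilon_\tau$ lie in the $\Z$-span of $\{{\mathfrak S}_w: w\in S_n\}$ (the staircase bound), this span is preserved by the operators $\partial_i$, and a representative of the fixed class $[Y_\gamma]$ (resp.\ $[Y_{\gamma}]_T$) lying in that span is \emph{unique} --- in ordinary cohomology because such a polynomial is the Gr\"obner normal form of its coset modulo $I^{S_n}$ (Proposition~\ref{cor:normalform}), and equivariantly by localization at $T$-fixed points together with the vanishing property of double Schubert polynomials (Proposition~\ref{prop:tworeps}). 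Since every chain from every admissible $\tau$ produces, by the push--pull step, a representative of the same class lying in this span, all outputs coincide, with no analysis of words or tableaux required. Without this (or an equivalent) uniqueness argument, your proposal does not establish that $\Upsilon_\gamma$ is well-defined.
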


We now make a few easy observations about the
$\Upsilon_{\gamma}$'s.

The flagged Schur polynomials from (\ref{eqn:defformatchless}) are 
Schubert polynomials (see Proposition~\ref{prop:KMY}).
It is a standard fact that any product of Schubert classes expands as a nonnegative linear combination of Schubert classes, and moreover the Schubert polynomials represent the
Schubert classes under the Borel isomorphism; see, e.g., Chapter~10 (and specifically Section~10.4) of
\cite{Fulton}. It follows that
$\Upsilon_{\gamma}$ is a nonnegative linear combination of Schubert polynomials. 
Since Schubert polynomials have nonnegative integer coefficients,
\[\Upsilon_{\gamma}\in {\mathbb Z}_{\geq 0}[x_1,\ldots,x_n]
\mbox{ \ \  for all $\gamma\in {\tt Clan}_{p,q}$.}\]
We have emphasized the monomial expansion of $\Upsilon_{\gamma}$ since this positivity should have a geometric explanation (see Section~3).

Finally, by our definition of $\Upsilon_{\tau}$ for $\tau$ matchless, 
it is easy to see that $\Upsilon_{\tau}$ has degree
$\binom{n}{2} - \binom{p}{2} - \binom{q}{2} = pq$.  This reflects the 
fact that any closed $K$-orbit is isomorphic
to the flag variety for the group $K$, and hence has dimension equal to $\binom{p}{2} + \binom{q}{2}$.  Combining
this with the aforementioned dimension formula of A.~Yamamoto (cf.~(\ref{eqn:Yam})), 
and with the fact that application of $\partial_i$ lowers the degree of any 
polynomial by $1$, it follows that the degree of $\Upsilon_{\gamma}$ 
for arbitrary
$\gamma$ is $pq - l(\gamma)$,
the codimension of $\caO_{\gamma}$ in the flag variety.

\subsection{Further results and comparisons to the literature}
For a reductive algebraic group $G$ over ${\mathbb C}$,
let $B$ be a Borel subgroup and $K\subset G$ be a {\bf spherical} subgroup,
i.e., one which acts by left translations on $G/B$ with finitely many orbits.

The most widely analyzed case is when $K=B$, where the orbit closures
are Schubert varieties. In this setting, the polynomial representatives 
problem was studied for Schubert varieties (in general type) by 
I.~Bernstein-I.~Gelfand-S.~Gelfand \cite{BGG}. 
In type $A$, this led to the development of
\emph{Schubert polynomials} by A.~Lascoux-M.-P.~Sch\"{u}tzenberger
\cite{Lascoux.Schutzenberger}. Both papers begin with a 
choice of polynomial representative for the class of a point, with the remainder recursively obtained using $\partial_i$'s. However, the salient feature of Schubert polynomials is the
nonnegativity of their coefficients.  Since their discovery, many nice combinatorial properties of Schubert polynomials
have been found, including combinatorial formulas for their expansion; see, 
e.g., the textbook \cite{Manivel}. 
We will use properties of Schubert polynomials to establish our main results.

A spherical subgroup $K$ is {\bf symmetric} if $K=G^{\theta}$ is the fixed point 
subgroup for a holomorphic
involution $\theta$ of $G$. The symmetric pairs $(G,K)$ have a classification.  For generalities, the reader may consult, e.g., \cite{Matsuki,Springer,Matsuki-Oshima,Richardson-Springer}.  The case of 
$(GL_{p+q},GL_p \times GL_q)$ corresponds to the involution 
\[\theta(A)=I_{p,q}A I_{p,q}\]
where
$I_{p,q}$ is the diagonal $\pm 1$ matrix with $p$ many $1$'s followed by $q$ many $-1$'s. For more details about
this case, see, e.g., \cite{Yamamoto,McGovern,McGovern-Trapa,Wyser-13a}.  

The first author gave equivariant cohomology representatives 
for the closed orbits of cases of symmetric pairs $(G,K)$ with $G$ classical in \cite{Wyser-13a, Wyser-13b}.  For the case of $(GL_{p+q}, GL_p\times GL_q)$, small examples suggest that those representatives may also produce a self-consistent system, although we do not know a proof of this.  At any rate, their ordinary cohomology specializations do not have nonnegative integer coefficients in general. 

To our best knowledge,
this paper provides the first self-consistency proof of its kind for any symmetric pair $(G,K)$. In the case of
Schubert varieties, the divided difference recurrence has only one initial condition (the class of a point).
Further, minimal paths in the weak Bruhat order of $S_n$ correspond to reduced words of the same permutation.
Since divided differences satisfy the braid relations, self-consistency is automatic for Schubert polynomials.
As we have observed, neither of these two helpful properties hold for the symmetric pair we consider
here.  

For some other symmetric pairs (also defined over the complex numbers), such as $(GL_{2n},Sp_{2n})$ or $(GL_n,O_n)$, the property of having only one initial condition --- that is, a unique closed $K$-orbit --- \textit{does} hold.  However, even in such cases,
minimal chains in weak order can again correspond to reduced words of \textit{different} permutations, so
self-consistency is not a given in these cases either.  The two aforementioned additional cases are considered
in a sequel \cite{WyYo2}.

There is further support for the choice of $\Upsilon_{\gamma}$. We 
use a geometric perspective originally applied by
A.~Knutson-E.~Miller \cite{Knutson.Miller:annals} to justify Schubert polynomials.
For a variety $X\subset GL_n/B$, consider the preimage $\pi^{-1}(X)\subset GL_n$ under the natural projection,
and $\overline{\pi^{-1}(X)}\subset {\rm Mat}_{n\times n}$. Because $\pi^{-1}(X)$ is a union of
left cosets of $B$, $\overline{\pi^{-1}(X)}$ is stable under right multiplication by $B$.
Identifying
\[[\overline{\pi^{-1}(X)}]_B\in H^\star_B({\rm Mat}_{n\times n}) \mbox{\  with 
$[\overline{\pi^{-1}(X)}]_T \in H^\star_T({\rm Mat}_{n\times n})\cong {\mathbb Z}[x_1,\ldots, x_n]$}\]
(see \cite[Section~1.2]{Knutson.Miller:annals}) uniquely picks out a polynomial representative for $[X]\in H^{\star}(GL_n/B)$. In the case $X=X_w:={\overline{B_{-}wB/B}}$ of Schubert varieties,
to actually compute $[\overline{\pi^{-1}(X_w)}]_T$ they obtain, by Gr\"{o}bner degeneration, the multidegree of
Fulton's \emph{Schubert determinantal ideal} $I_w$, whose generators scheme-theoretically cut out $\overline{\pi^{-1}(X_w)}$.
Their conclusion is 
\[[\overline{\pi^{-1}(X_w)}]_T={\mathfrak S}_w(x_1,\ldots,x_n),\] 
the Schubert polynomial for $X_w$ \cite[Theorem A]{Knutson.Miller:annals}.

To study the case $X=Y_{\gamma}$, we define the \emph{$K$-orbit determinantal ideal} $I_{\gamma}$, generated by minors of the generic $n\times n$ matrix and certain auxiliary matrices.   
When $\gamma$ is {\bf non-crossing}, i.e., no two arcs overlap (see the second of the displayed clans on page~2 for a non-example),
these generators form a Gr\"{o}bner basis with squarefree lead terms. The prime decomposition of the Gr\"{o}bner limit is 
indexed by monomials of $\Upsilon_{\gamma}$. That is, $I_{\gamma}$
scheme-theoretically cuts out $\overline{\pi^{-1}(Y_\gamma)}$, and we show
\[[\overline{\pi^{-1}(Y_\gamma)}]_T=\Upsilon_{\gamma}(x_1,\ldots,x_n), \text{\ \  for non-crossing $\gamma$}.\]
See Theorem \ref{claim:main}, whose proof uses \cite{Knutson.Miller:annals, KMY,
Wyser12, Wyser-13a}.  This provides a geometric rationale for 
our choice of representatives, at least for the non-crossing case.  Furthermore, we conjecture that the above
equality holds for all $\Upsilon_{\gamma}$, whether $\gamma$ has crossings or not (cf. Section \ref{sec:conjectures}).  

	The non-crossing condition is special because then
$Y_{\gamma}$ is a \emph{Richardson variety} \cite{Wyser12}, or the intersection of a Schubert variety with an
opposite Schubert variety.  Such varieties are so named because they were first studied by R.~W.~Richardson in
\cite{Richardson}.  Properties of Richardson varieties can be transparently deduced from the two Schubert
varieties involved \cite{KWY}. These facts were our starting point for this project.

In \cite{Brion}, M.~Brion proves (in a general setting, which applies in particular to the case at hand)
a formula for $[Y_{\gamma}]$ as a sum of Schubert \emph{classes}.  In our example, this sum turns out to be
multiplicity-free, meaning that all Schubert classes occurring in the sum occur with coefficient $1$.
Thus taking Brion's formula and replacing each Schubert \textit{class} with its corresponding Schubert \textit{polynomial}
gives a cohomological representative of the type we are seeking.  Indeed, our arguments will make it apparent that
the representative so obtained is in fact equal to $\Upsilon_{\gamma}$. However, while Brion's formula applies in both
(ordinary) cohomology and $K$-theory, it does \textit{not} apply $T$-equivariantly in either theory.  Thus our 
representatives in the $T$-equivariant setting are truly ``new``, in the sense that they cannot be easily be
deduced from Brion's formula.

Finally, we consider a modification of the $K$-orbit determinantal ideal which we conjecture  
provides local equations of $Y_{\gamma}$, cf. Conjecture \ref{conj:D}.  Having such equations allows us to study
the singularities of the orbit closures inside $G/B$.  The \emph{Kazhdan-Lusztig-Vogan polynomials} are one local
measure of these singularities. 
We describe a conjectural analogy with another singularity measure, the \emph{$H$-polynomials} of $Y_{\gamma}$, 
defined in Section \ref{sec:h-poly}.
This analogy parallels that between \emph{Kazhdan-Lusztig polynomials} and $H$-polynomials of Schubert
varieties described by L.~Li and the second author in \cite[Section~2]{Li.Yong}.

\subsection{Organization}
In Section~2, we introduce a family of polynomials in two sets of variables, with a deformation parameter. This family is 
defined using Schubert polynomials and divided difference operators. With this,
we state our choice of polynomial representatives
for equivariant cohomology and equivariant $K$-theory. We establish our main theorems (Theorems~\ref{thm:intro},~\ref{thm:equivver}
and~\ref{thm:K}) that
they define a self-consistent system. In Section~3, we define the
$K$-orbit determinantal ideal and establish our Gr\"{o}bner basis theorem in the non-crossing case as well as formulate the more general conjectures. 
In Section~4, we use a modification of these ideals in our exploration of the singularities of $Y_{\gamma}$.

\section{More polynomial families and cohomology theories}

\subsection{Definition of $\Upsilon^{(\beta)}_{\gamma}$}\label{sec:defs}
For non-crossing $\gamma$, define $u(\gamma)\in S_n$ by assigning 
\begin{itemize}
\item $-$'s and left endpoints of arcs the labels $1,2,\ldots,q-1,q$ from left to right, and
\item $+$'s and right endpoints of arcs the labels $q+1,q+2,\ldots,n$ from left to right.
\end{itemize}  
Define $v(\gamma)\in S_n$ by assigning 
\begin{itemize}
\item $+$'s and left endpoints of arcs the labels $1,2,\ldots,p-1,p$ from left to right, and 
\item $-$'s and right endpoints of arcs the labels $p+1,p+2,\ldots,n$ from left to right. 
\end{itemize}

\begin{Example}
For the second clan $\gamma\in {\tt Clan}_{6,4}$ shown on page~2,
$u(\gamma)=512637849 \ 10$ and $v(\gamma)=127389456 \ 10$.
\end{Example}

\begin{Example}
\label{exa:nonmatching}
We are especially interested in matchless clans, which we typically denote by $\tau$. If $\tau=++--+-++$
(as in Section~1) then $u(\tau)=45126378\in S_8$ (in one-line notation) and $v(\tau)=12673845$.\qed
\end{Example}

The discussion that follows freely uses facts about Schubert varieties, flag varieties and Schubert polynomials. Material on Schubert varieties and flag varieties may be found in Chapters 9 and 10 of \cite{Fulton}. 
Material about Schubert polynomials appears in Chapter 10.4 of \emph{loc. cit} as well as Chapter 2 of \cite{Manivel}.

Let $X=\{x_1,x_2,\ldots,x_n\}$ and $Y=\{y_1,y_2,\ldots,y_n\}$ be independent and commuting
indeterminates. The $\beta$-{\bf double Schubert polynomial} ${\mathfrak S}^{(\beta)}_w(X;Y)$ is defined by setting
\[{\mathfrak S}^{(\beta)}_{w_0}(X;Y)=\prod_{i=1}^{n-1}\prod_{j=1}^{n-i}(x_i-y_j+\beta x_i y_j)\]
where $w_0$ is the long element of $S_n$.  Define $\partial^{(\beta)}_i$ by
\[\partial^{(\beta)}_i(f)=\partial_i((1-\beta x_{i+1})f).\]
Now, if $i$ is any position such that $w(i)<w(i+1)$ then 
\[{\mathfrak S}^{(\beta)}_w(X;Y)=\partial^{(\beta)}_i{\mathfrak S}^{(\beta)}_{ws_i}(X;Y)\] 
where $s_i$ is the simple reflection transposing $i$ and $i+1$. 
Recall that 
\[{\mathfrak S}_{w}(X;Y)={\mathfrak S}_{w}^{(0)}(X,Y)\] 
is the {\bf double Schubert polynomial} and
\[{\mathfrak S}_w(X)={\mathfrak S}_w^{(0)}(X;0)\]
is the  {\bf single Schubert polynomial}. Also,
\[{\mathfrak G}_w(X;Y)={\mathfrak S}^{(1)}_w\left(x_i\mapsto 1-x_i;y_j\mapsto \frac{1-y_j}{y_j}\right)\]
is the {\bf double Grothendieck 
polynomial} ${\mathfrak G}_w(X;Y)$ and finally,
\[{\mathfrak G}_w(X)={\mathfrak G}_w(X;y_j\mapsto 1)\]
is the {\bf single Grothendieck polynomial}. The use of a deformation parameter $\beta$ in Schubert polynomial theory is found in \cite{Fomin.Kirillov:groth}.
Below we remind the reader in what sense  the above substitutions give
representatives of the Schubert classes.

When $\tau$ is matchless, define
\begin{equation}
\label{eqn:matchlessdef}
\Upsilon^{(\beta)}_\tau(X,Y)={\mathfrak S}^{(\beta)}_{u(\tau)}(X;y_n,y_{n-1},\ldots,y_1)\cdot
								{\mathfrak S}^{(\beta)}_{v(\tau)}(X;Y).
\end{equation}

For clans $\gamma$ which are not matchless, $\Upsilon_{\gamma}$ will be defined using divided difference operators
according to the weak order on $K$-orbits, which we now define.  Geometrically, we say that an orbit closure $Y_{\gamma}$
covers another orbit closure $Y_{\gamma'}$, and write $\gamma = s_i \cdot \gamma'$, if 
\[ Y_{\gamma} = \pi_i^{-1}(\pi_i(Y_{\gamma'})), \]
where $\pi_i: G/B \rightarrow G/P_{s_i}$ is the natural projection.  Here, $P_{s_i}$ is the standard minimal parabolic
subgroup $B \cup Bs_iB$ of $G$.  Note that this definition makes sense not only in our current example, but in any
situation where we are dealing with varieties $Y$ which are closures of orbits of a spherical subgroup acting on
$G/B$.  Indeed, this is the appropriate definition of weak order in all such settings.

In our example, the weak order has the following combinatorial description \cite{Matsuki, Yamamoto}.  The
{\bf weak Bruhat order} on ${\tt Clans}_{p,q}$ is the transitive
closure of the covering relation $s_i \cdot \gamma \succ \gamma=(c_1,\ldots,c_n)$ if either:
\begin{itemize}
\item[(a)] $s_i \cdot \gamma=(\ldots,c_{i+1},c_i,\ldots)$ and
\begin{itemize}
\item[$\bullet$] $c_i$ is a sign and  $c_{i+1}$ is the end of an arc matching
with a vertex to its right;
\item[$\bullet$] $c_i$ is the end of an arc matching with a vertex to its left
and $c_{i+1}$ is a sign; or
\item[$\bullet$] $c_i$ and $c_{i+1}$ are endpoints of different arcs, and the mate of $c_i$ is left of the
mate of $c_{i+1}$
\end{itemize}
\item[(b)] $s_i \cdot \gamma$ is obtained from $\gamma$ by replacing
$c_i=\pm$ and $c_{i+1}=\mp$ by an arc.
\end{itemize}

If $\gamma$ is not matchless, it follows from \cite[Theorem 4.6]{Richardson-Springer} that there is a
matchless clan $\tau$ and a sequence of the form
\[ \gamma = s_1 \cdot s_2 \cdots s_l \cdot \tau. \]

(Here, $l = l(\gamma)$ in the notation of Section~1.)  In this event, let
\[\Upsilon^{(\beta)}_\gamma(X;Y) = \partial_1^{(\beta)} \hdots \partial_l^{(\beta)} \Upsilon^{(\beta)}_{\tau}(X;Y).\]
Just as representatives of Schubert classes are specializations of ${\mathfrak S}^{(\beta)}_w(X;Y)$,
we will see that the same specializations of $\Upsilon^{(\beta)}(X;Y)$ give representatives of the classes
of $Y_{\gamma}$'s:
\begin{eqnarray}\nonumber
%\Upsilon^{(\beta)}_\gamma(X) &= & \Upsilon^{(\beta)}_{\gamma}(X;0)\\ \nonumber
\Upsilon_\gamma(X;Y) & := & \Upsilon^{(0)}_{\gamma}(X;Y)\\ \nonumber
\Upsilon_{\gamma}(X) & := & \Upsilon_{\gamma}(X;0)\\ \nonumber
\Upsilon^K_\gamma(X;Y) & := & \Upsilon^{(1)}_{\gamma}\left(x_i\mapsto 1-x_i;y_j\mapsto \frac{1-y_j}{y_j}\right)\\ \nonumber
\Upsilon^K_{\gamma}(X) & := & \Upsilon^K_{\gamma}(X;y_j\mapsto 1)\nonumber
\end{eqnarray}

\subsection{Some combinatorial properties of $\Upsilon^{(\beta)}_{\gamma}$}
We assume familarity with standard permutation combinatorics such as the Rothe diagram, essential set, code of a permutation and pattern avoidance; see, e.g.,
\cite[Sections 2.1-2.2]{Manivel}.

A permutation is {\bf vexillary} if it is $2143$-avoiding.
\begin{Lemma}
\label{lemma:isvex}
If $\gamma$ is non-crossing, then $u(\gamma)$ and $v(\gamma)$ are vexillary permutations. In addition $u(\gamma)$ and $v(\gamma)$
are inverse to Grassmannian permutations with descents at $q$ and $p$ respectively.
\end{Lemma}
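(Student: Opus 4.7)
My plan is threefold. \emph{Step 1:} verify the Grassmannian-inverse assertion directly from the definition. \emph{Step 2:} invoke the elementary fact that Grassmannian permutations are vexillary. \emph{Step 3:} use that vexillarity is closed under inversion.

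For Step 1, the definition partitions $\{1,\ldots,n\}$ into the set $A$ of positions of $-$'s and left endpoints of arcs (of size $q$, since any $(p,q)$-clan with $k$ arcs contains $q-k$ many $-$'s and $k$ left endpoints) and the set $B$ of positions of $+$'s and right endpoints of arcs (of size $p$). The permutation $u(\gamma)$ enumerates $1,\ldots,q$ along $A$ from left to right, and $q+1,\ldots,n$ along $B$ from left to right. Consequently $u(\gamma)^{-1}(1),\ldots,u(\gamma)^{-1}(q)$ lists the elements of $A$ in increasing order, while $u(\gamma)^{-1}(q+1),\ldots,u(\gamma)^{-1}(n)$ lists the elements of $B$ in increasing order. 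Hence $u(\gamma)^{-1}$ has at most one descent, which must occur at position $q$, so $u(\gamma)^{-1}$ is Grassmannian with descent at $q$. The same argument, with the roles of $+/-$ and $p/q$ swapped, shows that $v(\gamma)^{-1}$ is Grassmannian with descent at $p$.

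For Step 2, any Grassmannian permutation is vexillary: a $2143$-pattern in the ambient permutation would force two disjoint descents (one between the values playing the roles of $2$ and $1$, another between those playing $4$ and $3$), whereas a Grassmannian permutation has a unique descent. For Step 3, I would use the involution argument: if $w$ contains a $2143$-pattern at positions $i_1<i_2<i_3<i_4$, then setting $j_a = w(i_a)$ one checks that $w^{-1}$ contains a $2143$-pattern at the sorted positions $j_2<j_1<j_4<j_3$, and vice versa. Combining all three steps yields that $u(\gamma)$ and $v(\gamma)$ are themselves vexillary.

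I do not anticipate a real obstacle, since the argument is entirely combinatorial and bookkeeping-based. The only point requiring care is tracking the label counts so that the unique descent of each Grassmannian inverse lands precisely at position $q$ (respectively $p$), which is immediate from $|A|=q$ and $|B|=p$. I note that the non-crossing hypothesis enters only through the blanket assumption in Section~\ref{sec:defs} under which $u(\gamma)$ and $v(\gamma)$ are defined; the combinatorics of the proof itself does not invoke it.
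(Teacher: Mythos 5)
Your proof is correct, but it is organized differently from the paper's. The paper proves vexillarity of $u(\gamma)$ head-on: supposing a $2143$-pattern at positions $i_1<i_2<i_3<i_4$, the fact that the values $1,\ldots,q$ and $q+1,\ldots,n$ each occur in increasing order forces the entries playing the roles of ``2'' and ``4'' to lie in $\{q+1,\ldots,n\}$ and those playing ``1'' and ``3'' to lie in $\{1,\ldots,q\}$, contradicting $u(i_1)<u(i_4)$; it then disposes of the second assertion by observing that the essential set of $u(\gamma)$ lies entirely in column $q$, which is equivalent to the inverse-Grassmannian claim. You instead make the inverse-Grassmannian statement the primary fact, proved directly from $|A|=q$, $|B|=p$ and the left-to-right labelling, and then deduce vexillarity from two standard lemmas (Grassmannian permutations are vexillary; $2143$-avoidance is preserved under inversion), both of which you justify correctly. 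The two routes rest on the same structural observation --- $u(\gamma)$ is a shuffle of two increasing sequences occupying complementary value intervals --- but yours gives a more self-contained treatment of the Grassmannian-inverse assertion (no appeal to essential sets and the paper's ``straightforward to see'' step), while the paper's pattern argument is the shorter path to vexillarity. One pedantic point, which the paper flags parenthetically: if $u(\gamma)$ is the identity (e.g.\ $\gamma=-\cdots-+\cdots+$), its inverse has no descent at all, so ``descent at $q$'' holds only in a degenerate sense; your ``at most one descent'' phrasing is the honest statement. Your closing remark that the non-crossing hypothesis enters only as the blanket setting in which $u(\gamma)$ and $v(\gamma)$ are defined is likewise consistent with the paper's proof, which never uses it.
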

\begin{proof}
Consider $u:=u(\gamma)$ and suppose $i_1<i_2<i_3<i_4$ where
$u(i_1),u(i_2),u(i_3),u(i_4)$ are in the relative order $2143$. Then since $1,2,\ldots,q$ and $q+1,q+2,\ldots,p+q$
form rising sequences in $u$,
$\gamma(i_1),\gamma(i_3)\in \{q+1,q+2,\ldots,p+q\}$ and  $\gamma(i_2),\gamma(i_4)\in \{1,2,\ldots,q\}$.
Hence $\gamma(i_1)>\gamma(i_4)$, a contradiction. Thus $u$ is vexillary.

It is straightforward to see that the essential set of $u$ (provided $u$ is not the identity) must all lie in column $q$. This is equivalent
to the inverse Grassmannian claim.

The arguments for $v(\gamma)$ are similar. 
\end{proof}

\begin{Example}
Continuing Example~\ref{exa:nonmatching}, where $\tau=++--+-++$,
the diagrams of $u(\tau)$ and $v(\tau)$ are given below. (The $\bullet$'s of $D(\pi)$ are in
positions $(i,\pi(i))$.)
\[\begin{picture}(330,96)

\put(0,44){$D(u(\tau))=$}
\put(60,0){\makebox[0pt][l]{\framebox(96,96)}}
\thicklines
\put(60,72){\makebox[0pt][l]{\framebox(36,24)}}
\put(72,96){\line(0,-1){24}}
\put(84,96){\line(0,-1){24}}
\put(60,84){\line(1,0){36}}
\put(84,36){\makebox[0pt][l]{\framebox(12,12)}}
\thinlines
\put(102,90){\circle*{4}}
\put(102,90){\line(1,0){54}}
\put(102,90){\line(0,-1){90}} 

\put(114,78){\circle*{4}}
\put(114,78){\line(1,0){42}}
\put(114,78){\line(0,-1){78}} 

\put(66,66){\circle*{4}}
\put(66,66){\line(1,0){90}}
\put(66,66){\line(0,-1){66}} 

\put(78,54){\circle*{4}}
\put(78,54){\line(1,0){78}}
\put(78,54){\line(0,-1){54}} 

\put(126,42){\circle*{4}}
\put(126,42){\line(1,0){30}}
\put(126,42){\line(0,-1){42}} 

\put(90,30){\circle*{4}}
\put(90,30){\line(1,0){66}}
\put(90,30){\line(0,-1){30}}

\put(138,18){\circle*{4}}
\put(138,18){\line(1,0){18}}
\put(138,18){\line(0,-1){18}} 

\put(150,6){\circle*{4}}
\put(150,6){\line(1,0){6}}
\put(150,6){\line(0,-1){6}} 

%%% now the diagram on the right

\put(170,44){$D(v(\tau))=$}
\put(230,0){\makebox[0pt][l]{\framebox(96,96)}}
\thicklines
\put(254,48){\makebox[0pt][l]{\framebox(36,24)}}
\put(266,72){\line(0,-1){24}}
\put(278,72){\line(0,-1){24}}
\put(254,60){\line(1,0){36}}
\put(266,24){\makebox[0pt][l]{\framebox(24,12)}}
\put(278,24){\line(0,1){12}}
\thinlines
\put(236,90){\circle*{4}}
\put(236,90){\line(1,0){90}}
\put(236,90){\line(0,-1){90}} 

\put(248,78){\circle*{4}}
\put(248,78){\line(1,0){78}}
\put(248,78){\line(0,-1){78}} 

\put(296,66){\circle*{4}}
\put(296,66){\line(1,0){30}}
\put(296,66){\line(0,-1){66}} 

\put(308,54){\circle*{4}}
\put(308,54){\line(1,0){18}}
\put(308,54){\line(0,-1){54}} 

\put(260,42){\circle*{4}}
\put(260,42){\line(1,0){66}}
\put(260,42){\line(0,-1){42}} 

\put(320,30){\circle*{4}}
\put(320,30){\line(1,0){6}}
\put(320,30){\line(0,-1){30}}

\put(272,18){\circle*{4}}
\put(272,18){\line(1,0){54}}
\put(272,18){\line(0,-1){18}} 

\put(284,6){\circle*{4}}
\put(284,6){\line(1,0){42}}
\put(284,6){\line(0,-1){6}} 
\end{picture}
\]
The essential set boxes of $u(\tau)$ all lie in column $q=3$ while the essential set boxes
of $v(\tau)$ lie in column $p=5$, in agreement with Lemma~\ref{lemma:isvex}.\qed
\end{Example}

We now define {\bf pipe diagrams} associated to  $u(\gamma)$ for non-crossing $\gamma$. (The nomenclature
alludes to the ``pipe dreams'' terminology 
of \cite{Knutson.Miller:annals}.)
To start, replace each box of $D(u(\gamma))$ by a $+$.  The result is one of the
pipe diagrams. All other pipe diagrams are obtained from this first one by 
iterating the use of the local operation
\begin{equation}
\label{eqn:transformsconj}
\begin{matrix}
\cdot & \cdot\\
\cdot & +
\end{matrix} \ \mapsto \
\begin{matrix}
+ & \cdot\\
\cdot & \cdot
\end{matrix}
\end{equation}
with the additional restriction that no $+$'s appear in columns $q+1,q+2,\ldots,n$. The collection of all such pipe diagrams is denoted
${\tt Pipe}(u(\gamma))$. We define ${\tt Pipe}(v(\gamma))$ in the same way but using $D(v(\gamma))$ and requiring that there
are no $+$'s in columns $p+1,p+2,\ldots,n$. In addition, given any configuration ${\mathcal P}$ of $+$'s in the $n\times n$ grid
define
\[{\tt wt}^{(\beta)}({\mathcal P})=\prod_{\tiny{\mbox{$+$ in position $(i,j)$}}} x_i-y_j+\beta x_i y_j.\]

	We now explain why the initial conditions (\ref{eqn:matchlessdef}) 
defining $\Upsilon_{\tau}(X)$ agree with the ones from Section~1. Actually,
we have an extension. For $\gamma$ non-crossing, let 
$\tau^-$ be the matchless clan obtained by replacing each
left end of an arc by $-$ and any right end of an arc by $+$. Also,
let $\tau^+$ be the matchless clan obtained by replacing each
left end of an arc by $+$ and each right end of an arc by $-$. Define 
$\lambda(\gamma)$ to be $\lambda(\tau^-)$, and $\lambda(\widehat\gamma)$ to be
$\lambda(\widehat\tau^+)$, in the notation of the introduction.  Define also flaggings ${\vec f}(\gamma)$
and ${\vec f}(\widehat \gamma)$ to be ${\vec f}(\tau^-)$ and ${\vec f}(\widehat\tau^+)$, respectively.
The following result is straightforward from the results of \cite[Section~5]{KMY} (see specifically
Theorem~5.8) and the definitions of $u(\gamma)$, $v(\gamma)$, $\lambda(\gamma)$, and $\lambda(\widehat\gamma)$:
\begin{Proposition}
\label{prop:KMY}
For non-crossing $\gamma$ we have
\[
{\mathfrak S}^{(\beta)}_{u(\gamma)}(X;Y) =  \sum_{{\mathcal P}\in {\tt Pipe}(u(\gamma))} {\tt wt}^{(\beta)}({\mathcal P})
\mbox{ \ and \ }
{\mathfrak S}^{(\beta)}_{v(\gamma)}(X;Y) =  \sum_{{\mathcal P}\in {\tt Pipe}(v(\gamma))} {\tt wt}^{(\beta)}({\mathcal P}).\]
There is a (weight preserving) bijection between ${\tt Pipe}(u(\gamma))$ and semistandard set-valued Young tableaux of shape
$\lambda(\gamma)$ with flagging ${\vec f}(\gamma)$. The same holds for ${\tt Pipe}(v(\gamma))$
and semistandard set-valued Young tableaux of shape
$\lambda(\widehat\gamma)$ with flagging ${\vec f}(\widehat\gamma)$. In particular,
\[{\mathfrak S}_{u(\gamma)}(X)=s_{\lambda(\gamma),{\vec f}(\gamma)}(X) \mbox{ \ \ and  \ \ }
{\mathfrak S}_{v(\gamma)}(X)=s_{\lambda(\widehat\gamma),{\vec f}(\widehat\gamma)}(X).\]
\end{Proposition}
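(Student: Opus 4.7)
The plan is to invoke Theorem~5.8 of \cite{KMY}, which expresses the $\beta$-double Schubert polynomial of an arbitrary vexillary permutation as a generating function over semistandard set-valued tableaux of a specific shape and flagging (equivalently, over pipe dreams of that shape and flagging). By Lemma~\ref{lemma:isvex}, both $u(\gamma)$ and $v(\gamma)$ are vexillary — in fact inverse to Grassmannian permutations with descents at $q$ and $p$ respectively — so KMY applies directly. The content of the proposition then reduces to matching the shape and flagging supplied by KMY with $\lambda(\gamma), \vec f(\gamma)$ (for $u(\gamma)$) and $\lambda(\widehat\gamma), \vec f(\widehat\gamma)$ (for $v(\gamma)$).

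For the shape, I would first observe that the Rothe diagram of an inverse-Grassmannian permutation with unique descent at $q$ is supported in columns $1,\ldots,q$ and, after reordering its rows by decreasing length, is the Young diagram of a partition. Reading $u(\gamma)$ left-to-right, each symbol of $\gamma$ is either \emph{small} (a $-$ or a left endpoint of an arc, receiving a label in $\{1,\ldots,q\}$) or \emph{large} (a $+$ or a right endpoint of an arc, receiving a label in $\{q+1,\ldots,n\}$); the length of the $i$th row of the resulting partition counts the number of small symbols appearing to the left of the $i$th large symbol. This is exactly the lattice-path recipe defining $\lambda(\tau^-) = \lambda(\gamma)$, since replacing each arc in $\gamma$ by $-+$ to form $\tau^-$ preserves the small/large classification symbol-by-symbol and so does not change $u$ at all. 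The KMY flagging is the $p$-tuple of positions of the large symbols in one-line notation, which is precisely $\vec f(\tau^-) = \vec f(\gamma)$. The analogous computation for $v(\gamma)$ reverses the roles of $+$/$-$ and left/right endpoints; after flipping all signs to convert the resulting $(p,q)$-shape to a $(q,p)$-shape, one recovers $\lambda(\widehat\gamma)$ and $\vec f(\widehat\gamma)$.

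Once the shape and flagging are identified, the pipe dream formula stated in the proposition is immediate from the pipe dream form of KMY's theorem: the restriction that no $+$'s appear in columns beyond $q$ (respectively, $p$) for ${\tt Pipe}(u(\gamma))$ (respectively, ${\tt Pipe}(v(\gamma))$) reflects the fact that the essential set lies in column $q$ (respectively, $p$), as already observed in the proof of Lemma~\ref{lemma:isvex}, together with the observation that the move (\ref{eqn:transformsconj}) only moves $+$'s leftward or upward. The weight-preserving bijection with semistandard set-valued tableaux is then precisely the one given by KMY for vexillary permutations, and the monomial $(\beta=0)$ specialization recovers the flagged Schur polynomial identities. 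The main obstacle throughout is the bookkeeping required to check that the shape and flagging produced by KMY agree with the combinatorially defined $\lambda(\gamma), \vec f(\gamma), \lambda(\widehat\gamma), \vec f(\widehat\gamma)$; once this translation is in hand, the rest is an immediate specialization.
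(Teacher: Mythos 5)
Your proposal is correct and takes essentially the same route as the paper, whose entire proof is the remark that the statement is ``straightforward from the results of \cite[Section~5]{KMY} (see specifically Theorem~5.8) and the definitions'' — i.e., exactly your combination of Lemma~\ref{lemma:isvex}, KMY's vexillary theorem, and the shape/flagging translation. One slip in your bookkeeping: the length of the $i$th row (both of the sorted Rothe diagram of $u(\gamma)$ and of $\lambda(\gamma)$ from the lattice-path recipe) is the number of small symbols strictly to the \emph{right} of the $i$th large symbol, i.e.\ $q$ minus the number to its left, not the number to its left; since both sides are given by this same corrected count, your identification of shapes and flaggings goes through as intended.
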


%The code ${\tt code}(w)$ of a permutation $w\in S_n$ is the vector
%$(c_1,\ldots,c_n)$ where $c_i$ is the number of boxes of the diagram of $w$
%in row $i$.
%\begin{Lemma}
%The support of ${\tt code}(u(\gamma))$ and ${\tt code}(v(\gamma))$ is disjoint
%and equals $[1,n-1]$.
%\end{Lemma}

\begin{Proposition}
\label{prop:Snonly}
Suppose $\gamma$ is non-crossing and 
\[{\mathfrak S}^{(\beta)}_{u(\gamma)}(X;y_n,y_{n-1},\ldots,y_1){\mathfrak S}^{(\beta)}_{v(\gamma)}(X;Y)=\sum_{\kappa\in {\mathbb Z}_{\geq 0}^{\infty}}
c^{(\beta)}_{\kappa}(Y){\bf x}^{\kappa},\]
where ${\bf x}^{\kappa}=x_1^{\kappa_1}x_2^{\kappa_2}\cdots$
 and $c^{(\beta)}_{\kappa}(Y)\in {\mathbb Z}[\beta][Y]$. Then $c^{(\beta)}_{\kappa}(Y)=0$ unless $\kappa\leq (n-1,n-2,\ldots,3,2,1,0,0,0,\ldots )$
(component-wise comparison).
\end{Proposition}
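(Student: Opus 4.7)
The plan is to reduce the statement to a combinatorial inequality on pairs of pipe diagrams, and then establish that inequality by analyzing flagged semistandard tableaux. By Proposition~\ref{prop:KMY},
\[
{\mathfrak S}^{(\beta)}_{u(\gamma)}(X; y_n, \ldots, y_1) = \sum_{P \in {\tt Pipe}(u(\gamma))} \prod_{(i,j) \in P} \bigl(x_i - y_{n-j+1} + \beta\, x_i y_{n-j+1}\bigr),
\]
and analogously for ${\mathfrak S}^{(\beta)}_{v(\gamma)}(X; Y)$. Each factor contributes $x_i$-degree at most $1$, so in every monomial of the product, the exponent of $x_i$ is bounded by $a_i(P) + b_i(Q)$, where $a_i(P), b_i(Q)$ count the $+$'s in row $i$ of $P, Q$. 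The asserted bound $\kappa \le (n-1, n-2, \ldots, 1, 0, \ldots)$ is the same as $\kappa_i \le n - i$ for all $i$, so it suffices to prove $a_i(P) + b_i(Q) \le n - i$ for every pair $(P, Q)$.

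Next I would translate to SSYT language. By Proposition~\ref{prop:KMY}, $a_i(P)$ is the number of entries equal to $i$ in the SSYT of shape $\lambda(\gamma)$ with flagging $\vec f(\gamma)$ associated to $P$. I would establish the following lemma: if $\vec f$ is strictly increasing and $r^*(i) := \min\{r : f_r \ge i\}$, then the maximum number of $i$'s in any SSYT of shape $\lambda$ flagged by $\vec f$ equals $\lambda_{r^*(i)}$ (interpreted as $0$ if $r^*(i)$ exceeds the length of $\lambda$). The upper bound holds because an $i$ can appear only in rows $r \in [r^*(i), i]$, each column contributes at most one $i$, and $\lambda$ is weakly decreasing. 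Achievability is realized by the explicit SSYT with row $r = (r, \ldots, r)$ for $r < r^*(i)$, row $r^*(i) = (i, \ldots, i)$, and row $r^*(i)+j = (i+j, \ldots, i+j)$; the strict monotonicity of $\vec f$ ensures the flag is respected in every row.

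Combining this lemma with the formulas $\lambda_r(\gamma) = q - f_r(\gamma) + r$ and $\lambda_r(\widehat\gamma) = p - f_r(\widehat\gamma) + r$ (which follow directly from the lattice-path description of $\lambda(\tau^-)$ and $\lambda(\widehat{\tau^+})$), and writing $M_i(\gamma), M_i(\widehat\gamma)$ for the corresponding maxima, one obtains
\[
M_i(\gamma) + M_i(\widehat\gamma) = n - \bigl(f_{r^*(i)}(\gamma) + f_{r^{**}(i)}(\widehat\gamma)\bigr) + \bigl(r^*(i) + r^{**}(i)\bigr).
\]
Partitioning the positions $\{1, \ldots, n\}$ as $P \sqcup N \sqcup L \sqcup R$ (positions of $+$ signs, $-$ signs, left arc endpoints, right arc endpoints), one has $\vec f(\gamma) = P \cup R$ and $\vec f(\widehat\gamma) = N \cup R$, and a direct count gives $r^*(i) + r^{**}(i) = i + 1 - A_i$, where $A_i$ counts arcs straddling position $i$ (left endpoint $< i$, right endpoint $\ge i$).

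The main obstacle is the complementary bound $f_{r^*(i)}(\gamma) + f_{r^{**}(i)}(\widehat\gamma) \ge 2i + 1 - A_i$. Each summand is individually $\ge i$; if they are distinct, the sum is automatically $\ge 2i + 1$, and we are done. If they coincide at a common value $k$, then $k$ must lie in $(P \cup R) \cap (N \cup R) = R$. If $k = i$, then position $i$ is a right arc endpoint, forcing $A_i \ge 1$ so that $2k = 2i \ge 2i + 1 - A_i$; if $k > i$, then $2k \ge 2i + 2$ already exceeds $2i + 1$. Combining this with the previous identity, the $A_i$ terms cancel, yielding $M_i(\gamma) + M_i(\widehat\gamma) \le n - i$, which is exactly the inequality we needed.
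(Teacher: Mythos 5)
Your argument is correct, and it takes a genuinely different route in its details from the paper's. The paper first proves the bound for the ordinary product ${\mathfrak S}_{u(\gamma)}(X){\mathfrak S}_{v(\gamma)}(X)$ (Claim~\ref{claim:staircase}) by bounding the exponent of $x_i$ by $\omega+\omega'$, the widths of the first nonempty rows of $D(u(\gamma))$ and $D(v(\gamma))$ at or below row $i$, and then showing $\omega+\omega'\leq n-i$ via the count $A+B\geq i$, which only uses that any initial segment of a clan contains at least as many left as right arc endpoints; the general $\beta$/$Y$ case is then reduced to this by extracting an ordinary tableau from each set-valued tableau. You instead handle $\beta$ and $Y$ directly --- each pipe-diagram factor $x_i-y_j+\beta x_iy_j$ has $x_i$-degree at most one, so no reduction step is needed --- and you bound the number of $i$'s in each family by the explicit quantity $\lambda_{r^*(i)}$ coming from the flagging, finishing with the identities $\lambda_r(\gamma)=q-f_r(\gamma)+r$, $\lambda_r(\widehat\gamma)=p-f_r(\widehat\gamma)+r$, the count $r^*(i)+r^{**}(i)=i+1-A_i$, and the case analysis on whether the two minimal flag entries $\geq i$ coincide (which forces a right endpoint and hence $A_i\geq 1$ when they coincide at $i$). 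The two proofs are cousins --- both bound the number of $i$'s by a row width and then count over the clan --- but yours makes the flagging data carry the argument explicitly and avoids the set-valued-to-ordinary reduction, at the cost of a slightly longer arithmetic finish; the paper's version isolates a clean $\beta=0$, $Y=0$ statement that it can then reuse for the deformation.

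Two small points to patch. First, for $\beta\neq 0$ the tableaux in Proposition~\ref{prop:KMY} are \emph{set-valued}, so your lemma should be stated in that generality; the upper-bound half (entries in row $r$ lie in $[r,f_r]$, and each column carries at most one $i$) goes through verbatim, and achievability is not needed. Second, your computation tacitly assumes $r^*(i)$ and $r^{**}(i)$ both exist. If, say, every element of $P\cup R$ is $<i$, then every position $\geq i$ is a $-$ (a left endpoint at position $\geq i$ would force a right endpoint $\geq i$), so $M_i(\gamma)=0$ while $f_{r^{**}(i)}(\widehat\gamma)=i$ and $M_i(\widehat\gamma)=p-i+r^{**}(i)\leq p+q-i=n-i$ since $r^{**}(i)\leq q$; the symmetric case is identical, and both cannot fail simultaneously. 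With these remarks the proof is complete.
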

\begin{proof}
Let us first show:

\begin{Claim}
\label{claim:staircase}
If ${\bf x}^{\kappa}$ appears in
${\mathfrak S}_{u(\gamma)}(X){\mathfrak S}_{v(\gamma)}(X)$ then $\kappa\subseteq (n-1,n-2,\ldots,2,1,0)\in {\mathbb Z}_{\geq 0}^n$. 
\end{Claim}
\begin{proof}
Suppose ${\bf x}^\kappa=\cdots x_i^m \cdots$. Let $\omega$ be
the width of the first non-empty row of $D(u(\gamma))$ 
that occurs in some row $s\geq i$ of $n\times n$.
Let $\omega'$ be the width of the first
nonempty row of $D(v(\gamma))$ that occurs in some row $t\geq i$
of $n\times n$. It is easy to see from the definitions that
\[m\leq \omega+\omega'.\]
We may assume without loss that $s$ and $t$ exist and also $t\geq s$  
(the alternate cases are proved similarly).

Let $A$ be the number of $-$'s or left ends of an arc occuring in the
leftmost $s$ positions of $\gamma$. Let $B$ be the number
of $+$'s or left ends of an arc occuring in the leftmost $t$ positions of
$\gamma$. Now
\[\omega=q-A \mbox{ \ \ \ and  \ \ \ } \omega'=p-B.\]
Since 
\[\omega+\omega'=p+q-A-B\]
it suffices to show $A+B\geq i$. Now, because in any left initial
segment of $\gamma$, the number of right ends of an arc is at most the number of left ends of an arc, we have:
\begin{eqnarray}\nonumber
A+B & \geq & A+\#\{\mbox{$+$ or left end of an arc in first $s$ 
								positions of $\gamma$}\}\\ \nonumber
        &\geq & A+\#\{\mbox{$+$ or right end of an arc in first $s$ 
								positions of $\gamma$}\}\\ \nonumber
       & = & s \ \ \geq i,\nonumber
\end{eqnarray} 
as desired.\end{proof}

\excise{We argue by double induction on $(p,q)$. First consider the case that $\gamma$ ends with a $+$.
Thus $u(\gamma)$ ends with $p+q$ and $v(\gamma)$ ends with $p$. Now
consider the clan $\gamma'$ obtained by removing that $+$. Thus $u(\gamma')$ is the same as $u(\gamma)$ with that
final $p+q$ removed; in particular, the diagrams of the two permutations is the same (up to an inclusion of an empty
row). Also, $v(\gamma')$ can be described as the unique permutation in $S_{p+q-1}$ which is obtained by removing
all boxes in column $p$. Here we have just used the fact that both $u(\gamma)$ and $v(\gamma)$ are inverse to a Grassmannian,
and in particular, all essential set boxes of $v(\gamma)$ lie in column $p$.
By induction any monomial ${\bf x}^{\kappa'}$ appearing in
${\mathfrak S}_{u(\gamma')}(X)\cdot {\mathfrak S}_{v(\gamma')}(X)$ satisfies
$\kappa'\subseteq (n-2,n-3,\ldots,3,2,1,0)\in {\mathbb Z}_{\geq 0}^{n-1}$. It then follows (using the ``$+$-diagram'' description
of flagged Schur polynomials) that the monomial ${\bf x}^{\kappa}$ is, at worst, equal to
${\bf x}^{\kappa'}\cdot x_1 x_2\cdots x_{n-1}$ and hence $\kappa\subseteq (n-1,n-2,\ldots,3,2,1,0)\subset {\mathbb Z}_{\geq 0}^n$,
as desired.

The argument in the case that $\gamma$ ends with $-$ is similar.}

Suppose the proposition is not true and 
there are set-valued tableaux $T$ and $U$ that contribute to ${\mathfrak S}^{(\beta)}_{u(\gamma)}(X;y_n,y_{n-1},\ldots,y_1)$ and ${\mathfrak S}^{(\beta)}_{v(\gamma)}(X;Y)$ respectively
(under the bijection of Proposition~\ref{prop:KMY}) such that 
the number of $i$'s in $T$ and $U$ combined strictly exceeds $n-i$, for
some $i$.
Now let $T'$ be the ordinary tableau that picks each of those $i$'s
as the representative of its box and picks any entry from the
remaining boxes. Since $T$ is semistandard, $T'$ is semistandard as well and contributes to ${\mathfrak S}_{u(\gamma)}(X)$.
Similarly, define $U'$, contributing to ${\mathfrak S}_{v(\gamma)}(X)$.
Then in ${\mathfrak S}_{u(\gamma)}(X){\mathfrak S}_{v(\gamma)}(X)$ the monomial
${\bf x}^{T'}{\bf x}^{U'}$ appears, contradicting Claim~\ref{claim:staircase}.
\end{proof}

It is well known (see, e.g., \cite[Proposition~2.5.4]{Manivel}) that the single Schubert polynomials $\{{\mathfrak S}_w(X): w\in S_{n}\}$ form a 
${\mathbb Z}$-linear basis of the vector space $\Gamma(X)$ of polynomials in $X$ using
only monomials ${\bf x}^{\kappa}$ where $\kappa\leq (n-1,n-2,\ldots,3,2,1)$. Now,
${\mathfrak G}_w(X)$ has the same lead term as ${\mathfrak S}_w(X)$ under the reverse
lexicographic order, namely ${\bf x}^{{\tt code}(w)}$. In addition, it is known (from
\cite{Fomin.Kirillov:groth}) that ${\mathfrak G}_{w}(X)\in \Gamma(X)$. Thus $\{{\mathfrak G}_w(X):w\in S_n\}$
also forms a basis of $\Gamma(X)$. Similarly, $\{{\mathfrak S}^{(\beta)}_w(X;Y): w\in S_n\}$ is a
${\mathbb Z}[\beta][Y]$-module basis of ${\mathbb Z}[\beta][Y]\otimes_{\mathbb Z} \Gamma(X)$. This is since ${\mathfrak S}^{(\beta)}_w(X;Y)$
also has leading term of ${\bf x}^{{\tt code}(w)}$ and if any term $c^{(\beta)}_{\kappa}(Y){\bf x}^{\kappa}$ is any
monomial then $\kappa\leq (n-1,n-2,\ldots,2,1,0)$.

Therefore, by Proposition~\ref{prop:Snonly}, when $\gamma$ is matchless
\[\Upsilon^{(\beta)}_{\gamma}(X;Y)=\sum_{w\in S_n} c^{(\beta)}_{\gamma,w}(Y){\mathfrak S}^{(\beta)}_{w}(X;Y).\]
Since $\partial^{(\beta)}_i$ sends $\beta$-Schubert polynomials to $\beta$-Schubert polynomials (or zero), such an expression where the summation is over $S_n$
holds for all clans.

	Given a clan $\gamma$ let $-\gamma$ be the clan where the $+$'s of $\gamma$ are replaced by $-$'s
and the $-$'s are replaced by $+$ (the arcs remain as is). We record the following property:

\begin{Proposition}[$\gamma\leftrightarrow -\gamma$ symmetry] Let $\gamma\in {\tt Clan}_{p,q}$. Then
\[\Upsilon_{-\gamma}^{(\beta)}(X;Y)=\Upsilon_{\gamma}^{(\beta)}(X;y_n,y_{n-1},\ldots,y_2,y_1).\]
\end{Proposition}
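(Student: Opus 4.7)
The plan is to check the identity first for matchless clans by direct inspection of the initial conditions (\ref{eqn:matchlessdef}), then propagate it through weak order using the fact that divided differences act only on the $X$-variables.

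\textbf{Step 1 (Matchless base case).}  Suppose $\tau \in {\tt Clan}_{p,q}$ is matchless, so $-\tau \in {\tt Clan}_{q,p}$.  First I would verify, directly from the definitions of $u(\cdot)$ and $v(\cdot)$ in Section~\ref{sec:defs}, the sign-swap identities
\[
u(-\tau) \;=\; v(\tau), \qquad v(-\tau) \;=\; u(\tau).
\]
Indeed, the $-$'s of $-\tau$ are exactly the $+$'s of $\tau$, and there are $p$ of them, so the labels $1,\ldots,p$ assigned by $u(-\tau)$ to the $-$'s of $-\tau$ are precisely the labels $1,\ldots,p$ assigned by $v(\tau)$ to the $+$'s of $\tau$; an analogous argument handles the $+$'s, and the same comparison gives $v(-\tau) = u(\tau)$.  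Substituting these into (\ref{eqn:matchlessdef}) yields
\[
\Upsilon_{-\tau}^{(\beta)}(X;Y) \;=\; {\mathfrak S}^{(\beta)}_{v(\tau)}(X;y_n,\ldots,y_1)\cdot {\mathfrak S}^{(\beta)}_{u(\tau)}(X;Y).
\]
On the other hand, applying the substitution $y_j \mapsto y_{n+1-j}$ to (\ref{eqn:matchlessdef}) swaps the second arguments of the two Schubert polynomial factors, giving exactly the same product.  This establishes the matchless base case.

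\textbf{Step 2 (Compatibility of $-$ with weak order).}  Next I would observe that the covering relations (a) and (b) recalled in Section~\ref{sec:defs} are invariant under the sign-flip $\gamma \mapsto -\gamma$: the three clauses of (a) do not distinguish $+$ from $-$, and clause (b) is symmetric in the two sign choices for the pair $(c_i,c_{i+1})$.  Hence $\gamma \mapsto -\gamma$ is an automorphism of weak Bruhat order on clans, and any reduced expression $\gamma = s_{i_1} \cdot s_{i_2} \cdots s_{i_l} \cdot \tau$ transports to $-\gamma = s_{i_1} \cdot s_{i_2} \cdots s_{i_l} \cdot (-\tau)$.

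\textbf{Step 3 (Propagation).}  Using the recursive definition, and invoking Theorem~\ref{thm:intro} (or its refined version) to know that $\Upsilon_\gamma^{(\beta)}$ is independent of the chosen path,
\[
\Upsilon_{-\gamma}^{(\beta)}(X;Y) \;=\; \partial_{i_1}^{(\beta)}\cdots \partial_{i_l}^{(\beta)}\, \Upsilon_{-\tau}^{(\beta)}(X;Y).
\]
By Step~1, $\Upsilon_{-\tau}^{(\beta)}(X;Y) = \Upsilon_{\tau}^{(\beta)}(X;y_n,\ldots,y_1)$.  Since each $\partial_{i_k}^{(\beta)}$ differentiates only in the $X$-variables and treats the $Y$-variables as scalars, it commutes with the substitution $y_j \mapsto y_{n+1-j}$.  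Therefore
\[
\partial_{i_1}^{(\beta)}\cdots \partial_{i_l}^{(\beta)}\, \Upsilon_{\tau}^{(\beta)}(X;y_n,\ldots,y_1) \;=\; \Upsilon_\gamma^{(\beta)}(X;y_n,\ldots,y_1),
\]
which is the desired identity.

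The only nontrivial step is Step~1, and the potential subtlety there is purely bookkeeping, namely correctly tracking how the reversal of the $Y$-sequence interacts with the two factors of (\ref{eqn:matchlessdef}); Steps~2 and~3 are essentially formal.
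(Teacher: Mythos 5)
Your proof is correct and follows essentially the same route as the paper: verify the identity for matchless clans directly from (\ref{eqn:matchlessdef}), note that sign-flip preserves the weak order covering relations so the same sequence of simple reflections carries $-\tau$ to $-\gamma$, and use that the $\partial_i^{(\beta)}$ act only on the $X$-variables. The paper's proof is simply terser, declaring the matchless base case ``clear from the definitions,'' whereas you spell it out via the identities $u(-\tau)=v(\tau)$ and $v(-\tau)=u(\tau)$.
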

\begin{proof}
Let $\tau$ be a matchless clan such that 
\[\Upsilon_{\gamma}^{(\beta)}(X;Y)=\partial_{i_m}^{(\beta)}\cdots \partial_{i_1}^{(\beta)} 
\Upsilon_{\tau}^{(\beta)}(X;Y),\]
for some chain in weak Bruhat order from $\tau$ to $\gamma$ defined by $i_1,\ldots,i_m$.
Now we are done since the same sequence defines a chain from $-\tau$ to $-\gamma$ and because
the proposition is clear from the definitions for matchless $\tau$.
\end{proof}

In 
the ordinary cohomology, there is a further sense in which the choice of $\Upsilon_{\gamma}$ is simple.
Consider the degree lexicographic term order on polynomials in ${\mathbb Q}[x_1,\ldots,x_n]$. The \emph{Gr\"{o}bner normal form} is a distinguished representative of any coset modulo
$I^{S_n}$. The Schubert polynomials ${\mathfrak S}_w$ for $w\in S_n$ are the normal forms for their cosets; this is a fact due to \cite[Section~12.1]{FGP}. Thus
any linear combination of these Schubert polynomials is the normal form for its coset modulo $I^{S_n}$. Concluding:

\begin{Proposition}[Gr\"{o}bner normal form property]
\label{cor:normalform}
$\Upsilon_{\gamma}(X)$ is the Gr\"{o}bner normal form representative for the class of $[Y_{\gamma}]$ under the degree lexicographic term order. 
In other words, it is 
the unique representative that is a linear combination of $\{{\mathfrak S}_w:w\in S_n\}$.
\end{Proposition}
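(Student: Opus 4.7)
The proof plan is to assemble three ingredients already established in the excerpt: (i) $\Upsilon_{\gamma}(X)$ is a ${\mathbb Z}$-linear combination of Schubert polynomials ${\mathfrak S}_w(X)$ indexed by $w \in S_n$; (ii) these Schubert polynomials are Gr\"obner normal forms under the degree lexicographic term order, by \cite[Section~12.1]{FGP}; and (iii) $\Upsilon_{\gamma}(X)$ represents $[Y_{\gamma}]$ under Borel's isomorphism, which is Theorem~\ref{thm:intro}. The proposition then follows immediately, since the set of normal forms is closed under ${\mathbb Z}$-linear combinations, so the Schubert expansion guaranteed by (i) is itself the normal form of its coset; by (iii) that coset is $[Y_{\gamma}]$, and uniqueness of normal forms gives both statements simultaneously.

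The only point that needs verification is (i). For matchless $\tau$, this is the specialization of Proposition~\ref{prop:Snonly} at $\beta=0$ and $Y=0$: the proposition bounds every monomial of $\Upsilon_\tau(X)$ componentwise by the staircase $(n-1,n-2,\ldots,1,0)$, so $\Upsilon_\tau(X)$ lies in the ${\mathbb Z}$-module $\Gamma(X)$ of polynomials spanned by such monomials, and $\{{\mathfrak S}_w(X) : w \in S_n\}$ is a ${\mathbb Z}$-basis of $\Gamma(X)$. For a general clan $\gamma$, write $\Upsilon_\gamma(X) = \partial_{i_1} \cdots \partial_{i_l} \Upsilon_\tau(X)$ from the definition and use the standard fact that each $\partial_i$ maps a Schubert polynomial to another Schubert polynomial indexed by $S_n$, or to zero. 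Hence $\Upsilon_\gamma(X)$ is again a ${\mathbb Z}$-linear combination of the ${\mathfrak S}_w(X)$, $w \in S_n$. I do not anticipate any serious obstacle here: the heavy lifting — namely the staircase bound of Proposition~\ref{prop:Snonly} and the identification of Schubert polynomials with Gr\"obner normal forms in \cite{FGP} — has already been done, and the present proposition is essentially a corollary.
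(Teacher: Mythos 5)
Your proposal is correct and follows essentially the same route as the paper: the Schubert expansion over $S_n$ (via the staircase bound of Proposition~\ref{prop:Snonly}, the basis fact for $\Gamma(X)$, and stability under the $\partial_i$'s), the normal-form property of Schubert polynomials from \cite[Section~12.1]{FGP}, and the representation statement of Theorem~\ref{thm:intro}. The only cosmetic difference is that you specialize to $\beta=0$, $Y=0$ at the outset, whereas the paper records the expansion at the level of $\Upsilon^{(\beta)}_{\gamma}(X;Y)$ and then specializes.
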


\subsection{Representatives in the Borel models}
We first explain our proof for equivariant cohomology (the argument in equivariant $K$-theory is completely
analogous). Let $T\subset GL_p\times GL_q$
be the torus of invertible diagonal matrices. Since each $Y_{\gamma}$ is $T$-stable, it admits a class $[Y_{\gamma}]_T\in H^\star_T(GL_n/B)$, a module over $H^\star_T(pt)\cong {\mathbb Z}[y_1,\ldots,y_n]$. The Borel-type model is
\begin{equation}
\label{eqn:equivBorel}
H^\star_T(GL_n/B)\cong {\mathbb Q}[X;Y]/J,
\end{equation}
where $J$ is the ideal generated by $e_i(X)-e_i(Y)$ and $e_i(X)$ is the elementary symmetric function in $X$, etc. 

\begin{Theorem}
\label{thm:equivver}
$\Upsilon_{\gamma}(X;Y)$ is well-defined and represents the coset of $[Y_{\gamma}]_T$ under (\ref{eqn:equivBorel}).
\end{Theorem}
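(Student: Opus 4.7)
The plan is to prove both assertions in one stroke: I will show that \emph{every} polynomial produced by the recursive recipe represents the equivariant class $[Y_\gamma]_T$ in the Borel model, then invoke Proposition~\ref{prop:Snonly} to argue that each such polynomial lies in a subspace on which the quotient map is injective, forcing all choices to agree on the nose.

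\textbf{Base case.} For matchless $\tau$ the closed $K$-orbit $Y_\tau$ is a transverse Richardson intersection $X_{v(\tau)}\cap X^{u(\tau)}$ of a Schubert variety with an opposite Schubert variety; this identification is due to \cite{Wyser12,KWY}, and the permutations $u(\tau),v(\tau)$ were defined precisely to effect it. Since the $T$-equivariant class of a transverse intersection is the product of the factors' classes, and since $\mathfrak{S}_w(X;Y)$ represents $[X_w]_T$ while $\mathfrak{S}_w(X;y_n,\ldots,y_1)$ represents $[X^w]_T$ under the Borel isomorphism, the initial condition~(\ref{eqn:matchlessdef}) with $\beta=0$ correctly represents $[Y_\tau]_T$.

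\textbf{Recursive step.} For a weak-order cover $\gamma = s_i\cdot\gamma'$, the defining condition $Y_\gamma = \pi_i^{-1}\pi_i(Y_{\gamma'})$ (where $\pi_i\colon GL_n/B\to GL_n/P_{s_i}$) together with the standard projection-formula calculation yields $[Y_\gamma]_T = \partial_i[Y_{\gamma'}]_T$ in $H^\star_T(GL_n/B)$. Applied along any chain $\tau \to \gamma$, this shows that every polynomial produced by the recursive definition of $\Upsilon_\gamma(X;Y)$ represents $[Y_\gamma]_T$ as a coset.

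\textbf{Promotion to polynomial equality.} To pass from ``representing the same coset'' to ``being the same polynomial,'' I invoke Proposition~\ref{prop:Snonly}: for each matchless $\tau$, the initial polynomial $\Upsilon_\tau(X;Y)$ has $x$-support contained in the staircase $(n-1,n-2,\ldots,1,0)$, so it expands as a $\mathbb{Z}[Y]$-linear combination of $\{\mathfrak{S}_w(X;Y):w\in S_n\}$. Because $\partial_i$ sends each $\mathfrak{S}_w$ to $\mathfrak{S}_{ws_i}$ or to $0$, this Schubert-span property is preserved throughout the recursion. Under~(\ref{eqn:equivBorel}) the polynomials $\{\mathfrak{S}_w:w\in S_n\}$ map to a $\mathbb{Z}[Y]$-basis of $H^\star_T(GL_n/B)$, so two polynomials in the Schubert span that represent the same class must coincide. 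Combining this with the base case and the recursion gives simultaneously the well-definedness of $\Upsilon_\gamma(X;Y)$ and the fact that it represents $[Y_\gamma]_T$.

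The main obstacle is the geometric content of the base case: one must identify each closed $K$-orbit as a transverse Richardson intersection with the specific permutations $u(\tau),v(\tau)$ prescribed in Section~2.1, so that the product~(\ref{eqn:matchlessdef}) actually computes $[Y_\tau]_T$. Without this identification the recursion has no geometric anchor; with it in hand, the remaining pieces follow cleanly from Proposition~\ref{prop:Snonly}, the compatibility of $\partial_i$ with $\pi_i$, and the fact that the double Schubert polynomials give a canonical basis of the equivariant cohomology.
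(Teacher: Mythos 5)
Your proposal is correct and follows essentially the same route as the paper: the base case via the identification of closed orbits as Richardson varieties from \cite{Wyser12} together with the known double Schubert representatives of Schubert and opposite Schubert classes, the recursion via the geometric push-pull operators, and the promotion to a polynomial identity via Proposition~\ref{prop:Snonly} and the stability of the Schubert span under $\partial_i$. The only cosmetic difference is that you justify the uniqueness step by citing that $\{\mathfrak{S}_w(X;Y):w\in S_n\}$ maps to an $H^\star_T(pt)$-basis of $H^\star_T(GL_n/B)$, whereas the paper proves the equivalent statement (Proposition~\ref{prop:tworeps}) by equivariant localization at $T$-fixed points.
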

(The forgetful map from $H^\star_T(GL_n/B)\twoheadrightarrow H^{\star}(GL_n/B)$ in this context amounts to setting each
$y_i=0$ and sends $[Y_{\gamma}]_T$ to $[Y_{\gamma}]$. Thus Theorem~\ref{thm:intro} follows from Theorem~\ref{thm:equivver}
since the forgetful maps and the Borel isomorphisms commute.)

The following is essentially standard. We include a proof for sake of completeness.

\begin{Proposition}
\label{prop:tworeps}
Suppose $f_1(X;Y)$ and $f_2(X;Y)$ are representatives of $[Y_{\gamma}]_T$ such that
\[f_1(X;Y)=\sum_{w\in S_n} a_w(Y){\mathfrak S}_w(X;Y) \mbox{ \ \  and \ \ }
f_2(X;Y)=\sum_{w\in S_n} b_w(Y){\mathfrak S}_w(X;Y).\]
Then $f_1(X;Y)=f_2(X;Y)$.
\end{Proposition}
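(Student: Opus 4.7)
The plan is to reduce the proposition to the standard statement that the $T$-equivariant Schubert classes form a free basis of $H^\star_T(GL_n/B)$ as a module over $H^\star_T(\mathrm{pt}) = \mathbb{Z}[Y]$. Consider the difference
\[ f(X;Y) := f_1(X;Y) - f_2(X;Y) = \sum_{w \in S_n} (a_w(Y) - b_w(Y))\, {\mathfrak S}_w(X;Y). \]
Since $f_1$ and $f_2$ are, by hypothesis, both representatives of $[Y_\gamma]_T$ under the Borel model (\ref{eqn:equivBorel}), we have $f \in J$, so $f$ represents the zero class in $H^\star_T(GL_n/B)$.

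Next, I would invoke the well-known fact (see, e.g., \cite[Chapter~10]{Fulton}) that under the Borel isomorphism (\ref{eqn:equivBorel}), the double Schubert polynomial ${\mathfrak S}_w(X;Y)$ represents the equivariant Schubert class $[X_w]_T$, and that the classes $\{[X_w]_T : w \in S_n\}$ form a free $\mathbb{Z}[Y]$-module basis for $H^\star_T(GL_n/B)$. Applying this to the displayed equation for $f$, we obtain
\[ 0 = [f]_T = \sum_{w \in S_n} (a_w(Y) - b_w(Y))\, [X_w]_T \quad \text{in } H^\star_T(GL_n/B), \]
and the basis property forces $a_w(Y) = b_w(Y)$ for every $w \in S_n$.

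Finally, to pass from equality of coefficients back to equality of polynomials, I would use the fact (cited in the paragraph just before Proposition~\ref{prop:tworeps}) that $\{{\mathfrak S}_w(X;Y) : w \in S_n\}$ is itself a $\mathbb{Z}[Y]$-module basis for the subring of $\mathbb{Z}[Y]\otimes \Gamma(X)$ consisting of polynomials whose $X$-multidegrees $\kappa$ satisfy $\kappa \leq (n-1,n-2,\ldots,1,0)$. Since all the Schubert polynomial coefficients of $f$ vanish, we conclude $f = 0$ in the polynomial ring, i.e., $f_1(X;Y) = f_2(X;Y)$ as desired.

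The only conceptual obstacle is the invocation of the free basis property for equivariant Schubert classes, but this is entirely standard and well-documented in the literature, so the proof is essentially a formal two-step unwinding: descend to $H^\star_T(GL_n/B)$ to kill the difference class, then use the polynomial-level basis property of $\{{\mathfrak S}_w\}$ to lift the vanishing back to the ring of polynomials.
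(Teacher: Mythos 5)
Your proof is correct, but it follows a different route from the paper's. You reduce the statement to the standard fact that the equivariant Schubert classes $\{[X_w]_T\}$ (represented by the double Schubert polynomials under the Borel presentation) form a free $H^\star_T(\mathrm{pt})$-module basis of $H^\star_T(GL_n/B)$: the difference $f_1-f_2$ lies in $J$, hence its class vanishes, and freeness kills all coefficients $a_w(Y)-b_w(Y)$ at once. The paper does not invoke this basis property as a black box; instead it proves the needed injectivity by hand, using equivariant localization (restriction of a Borel-model representative to the fixed point $\sigma$ is evaluation at $X=Y_\sigma$) together with the vanishing $\mathfrak{S}_w(Y_\sigma;Y)=0$ for $\sigma\not\geq w$, and then an induction along a linear extension of Bruhat order that peels off one coefficient at a time. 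The two arguments rest on essentially the same underlying triangularity --- indeed the freeness you cite is usually itself established by exactly the localization argument the paper spells out --- so what your version buys is brevity at the cost of importing a (perfectly standard) structural result, while the paper's version is more self-contained and, importantly, transfers verbatim to the $T$-equivariant $K$-theory setting of Theorem~\ref{thm:K}, where the authors only need the localization and vanishing statements rather than a citation for a $K_T$-basis theorem. Two small remarks: your final paragraph is superfluous, since once $a_w(Y)=b_w(Y)$ for all $w$ the equality $f_1=f_2$ is immediate from the two given expansions, with no need to re-invoke a polynomial-level basis; and \cite[Chapter~10]{Fulton} treats the non-equivariant theory, so it does not literally cover the equivariant basis statement you use --- a reference in the spirit of \cite[Section~1.2]{Wyser-13b} or a standard equivariant Schubert calculus source would be more apt.
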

\begin{proof}
We need that $a_w(Y)=b_w(Y)$ for all $w \in S_n$. 

Since $f_1$ and $f_2$ are equivariant cohomology class representatives of $[Y_\gamma]$ any substitution of $X$ by
a permutation $Y_{\sigma}=(y_{\sigma(1)},\ldots,y_{\sigma(n)})$ gives $f_1(Y_{\sigma};Y)=f_2(Y_{\sigma};Y)$ (this is where
we need that $\sigma\in S_n$).  This follows from the localization theorem for equivariant cohomology, combined with the fact 
that restriction to the $T$-fixed point $\sigma$ is given by 
\[ [Y_{\gamma}]_T|_{\sigma}= f_1(Y_{\sigma}; Y)=f_2(Y_{\sigma};Y). \]
These are standard facts, but the reader seeking a reference may consult \cite[Section~1.2]{Wyser-13b} for an expository treatment.  Also, \begin{equation}
\label{eqn:vanishing}
{\mathfrak S}_w(Y_\sigma;Y)=0 \mbox{\ if $\sigma\not\geq w$ in strong Bruhat order.}
\end{equation} 
Now, pick any linear extension
\[\pi^{(1)}=id,\pi^{(2)},\ldots,\pi^{(n!)}=w_0\]
of Bruhat order. Hence
\[a_{\pi^{(1)}}(Y){\mathfrak S}_{\pi^{(1)}}(Y_{\pi^{(1)}};Y)=f_1(Y_{\pi^{(1)}};Y)=f_2(Y_{\pi^{(1)}};Y)=
b_{\pi^{(1)}}(Y){\mathfrak S}_{\pi^{(1)}}(Y_{\pi^{(1)}};Y).\]
Since ${\mathfrak S}_{w}(Y_{w};Y)\neq 0$, dividing we conclude $a_{\pi^{(1)}}(Y)=b_{\pi^{(1)}}(Y)$.

Now set
\[f'_1(X;Y)=f_1(X;Y)-a_{\pi^{(1)}}(Y){\mathfrak S}_{\pi^{(1)}}(X;Y),\]
and
\[f'_2(X;Y)=f_2(X;Y)-a_{\pi^{(1)}}(Y){\mathfrak S}_{\pi^{(1)}}(X;Y).\]
Thus 
\[a_{\pi^{(2)}}(Y){\mathfrak S}_{\pi^{(2)}}(Y_{\pi^{(2)}};Y)=f'_1(Y_{\pi^{(2)}};Y)=f'_2(Y_{\pi^{(2)}};Y)=
b_{\pi^{(2)}}(Y){\mathfrak S}_{\pi^{(2)}}(Y_{\pi^{(2)}};Y),\]
and so $a_{\pi^{(2)}}(Y)=b_{\pi^{(2)}}(Y)$. 

Repeating, set
\[f''_1(X;Y)=f'_1(X;Y)-a_{\pi^{(2)}}(Y){\mathfrak S}_{\pi^{(2)}}(X;Y),\]
and
\[f''_2(X;Y)=f'_2(X;Y)-a_{\pi^{(2)}}(Y){\mathfrak S}_{\pi^{(2)}}(X;Y).\]
In this manner, we conclude all $n!$ desired equalities.
\end{proof}

We will establish the assumption of the following claim at the end of this section, and in a different way, in the next section.
\begin{Claim}
Assuming $\Upsilon_{\tau}(X;Y)$ represents $[Y_{\tau}]_T$ when $\tau$ is matchless, then
$\{\Upsilon_{\gamma}(X;Y)\}$ is self-consistent.
\end{Claim}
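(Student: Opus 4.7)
The plan is to combine three ingredients: (i) the expansion of $\Upsilon_\gamma(X;Y)$ in the double Schubert polynomial basis (already in hand from Proposition~\ref{prop:Snonly} together with the divided difference recursion), (ii) the classical fact that weak-order divided differences compute $T$-equivariant classes of $K$-orbit closures, and (iii) the uniqueness statement of Proposition~\ref{prop:tworeps}.

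First, I would make the Schubert basis remark completely explicit for clans. By Proposition~\ref{prop:Snonly}, combined with the fact that $\{\mathfrak{S}_w(X;Y):w\in S_n\}$ is a ${\mathbb Z}[Y]$-basis for the module of polynomials in $X$ whose monomials fit inside the staircase $(n-1,n-2,\ldots,1,0)$, the polynomial $\Upsilon_\tau(X;Y)$ for matchless $\tau$ has the form $\sum_{w\in S_n} c_{\tau,w}(Y)\,\mathfrak{S}_w(X;Y)$. Since $\partial_i\mathfrak{S}_w(X;Y)$ is either another double Schubert polynomial (with $w$ replaced by $ws_i$) or zero, an immediate induction along any chain in weak order shows that every candidate polynomial $\partial_{i_1}\cdots\partial_{i_l}\Upsilon_\tau(X;Y)$ produced by the recursion lies in the ${\mathbb Z}[Y]$-span of $\{\mathfrak{S}_w(X;Y):w\in S_n\}$.

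Next, I would invoke the geometric meaning of the weak-order covers. If $\gamma = s_i\cdot\gamma'$ in the weak order, then the projection $\pi_i:G/B\to G/P_{s_i}$ satisfies $Y_\gamma = \pi_i^{-1}(\pi_i(Y_{\gamma'}))$, and a standard push-pull computation in $T$-equivariant cohomology gives $[Y_\gamma]_T = \partial_i^{(0)}[Y_{\gamma'}]_T$ in the Borel model (\ref{eqn:equivBorel}); this is the same fact that underlies the Schubert case. Therefore, by induction on $l(\gamma)$, every polynomial obtained from the recursion $\partial_{i_1}\cdots\partial_{i_l}\Upsilon_\tau(X;Y)$ is a representative of $[Y_\gamma]_T$, given the assumption for matchless $\tau$.

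Finally, suppose $\gamma$ is expressed in two ways, $\gamma = s_1\cdots s_l\cdot\tau = s_1'\cdots s_{l'}'\cdot\tau'$, using possibly different matchless starting clans and different chains in weak order. The two resulting polynomials $f_1 := \partial_1\cdots\partial_l\Upsilon_\tau$ and $f_2 := \partial_1'\cdots\partial_{l'}'\Upsilon_{\tau'}$ are both representatives of $[Y_\gamma]_T$ and both lie in the ${\mathbb Z}[Y]$-span of $\{\mathfrak{S}_w(X;Y):w\in S_n\}$ by the previous two steps. Proposition~\ref{prop:tworeps} then forces $f_1 = f_2$, so $\Upsilon_\gamma(X;Y)$ is well-defined. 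The main obstacle in this plan is really external to this claim, namely justifying the matchless assumption itself; the self-consistency argument then boils down to observing that we never leave the Schubert basis, and that Proposition~\ref{prop:tworeps} kills any potential ambiguity among such representatives.
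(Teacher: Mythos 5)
Your proposal is correct and follows essentially the same route as the paper: expand the matchless representatives into double Schubert polynomials indexed by $S_n$ (via Proposition~\ref{prop:Snonly}), note that the $\partial_i$'s preserve both this span and the property of representing $[Y_\gamma]_T$ (by the geometric push-pull along weak-order covers), and conclude equality of any two candidates from Proposition~\ref{prop:tworeps}. The only difference is that you make explicit the compatibility of the algebraic $\partial_i$ with the geometric push-pull, which the paper leaves implicit.
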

\begin{proof}
Pick a (non-matchless) clan $\\gamma$ and suppose there are two matchless clans $\tau_1$ and $\tau_2$
(possibly with $\tau_1=\tau_2$) such that
\[[Y_{\gamma}]_T=\partial_{i_m}\cdots\partial_{i_1}[Y_{\tau_1}]_T \mbox{\ \ and \ \ }
[Y_{\gamma}]_T=\partial_{j_m}\cdots\partial_{j_1}[Y_{\tau_2}]_T;\]
where we have mildly abused $\partial_i$ to mean the geometrically defined (equivariant)
push-pull operator on classes. We need to establish the \emph{polynomial} equality:
\[\partial_{i_m}\cdots\partial_{i_1}\Upsilon_{\tau_1}(X;Y)=
\partial_{j_m}\cdots\partial_{j_1}\Upsilon_{\tau_2}(X;Y).\]
Since we know $\Upsilon_{\tau_1}(X;Y)$ and $\Upsilon_{\tau_2}(X;Y)$ expand into double Schubert
polynomials (from $S_n$), the claim follows from Proposition~\ref{prop:tworeps}.
\end{proof}

%%%%%%%%%% OLD GROBNER NORMAL FORM ARGUMENT NOW EXCISED
\excise{Since we will establish that $\Upsilon_{\gamma}$ and $\Upsilon_{\delta}$
are polynomial representatives for $[Y_{\gamma}]$ and $[Y_{\delta}]$ respectively,
we automatically know the weaker fact that
\[\partial_{i_m}\cdots\partial_{i_1}\Upsilon_{\gamma}=
\partial_{j_m}\cdots\partial_{j_1}\Upsilon_{\delta} \mbox{\ \ \  (mod $I^{S_n}$)},\]
i.e., that $\Upsilon_{\alpha}^{\circ}:=\partial_{i_m}\cdots\partial_{i_1}\Upsilon_{\gamma}$ and $\Upsilon'_{\alpha}:=\partial_{j_m}\cdots\partial_{j_1}\Upsilon_{\delta}$ are both polynomial representatives of the same coset.

We use some facts from the theory of Gr\"{o}bner bases.
Consider the degree lexicographic term order on polynomials in ${\mathbb Q}[x_1,\ldots,x_n]$. With this we have the \emph{normal form} with respect to this
order. The normal form is a distinguished representative of any coset modulo
$I^{S_n}$. It is known that the Schubert polynomials ${\mathfrak S}_w$ for $w\in S_n$ are the normal forms for their cosets. It follows therefore that
any linear combination of these Schubert polynomials is the normal form for
its coset modulo $I^{S_n}$.

To conclude, by (\ref{eqn:isS_n}) we have that both
$\Upsilon_{\alpha}^{\circ}$ and $\Upsilon'_{\alpha}$ are linear combinations
of Schubert polynomials from $S_n$. Thus they are the normal forms for their
cosets. However, they are both representatives of the same coset, hence
$\Upsilon_{\alpha}^{\circ}=\Upsilon'_{\alpha}$, as desired.}
%%%%%%% END OF EXCISED NORMAL FORM ARGUMENT

Following \cite[Section 2.3]{Knutson.Miller:annals}, the 
$K$-cohomology ring $K^{\circ}(GL_n/B)$ has the presentation
\[K^{\circ}(GL_n/B)\cong {\mathbb Z}[x_1,\ldots,x_n]/K\]
where $K$ is the ideal generated by $e_d(x_1,\ldots,x_n)-{n\choose d}$ for $d\leq n$; here $e_d(x_1,\ldots,x_n)$ is the
elementary symmetric function of degree $d$. Next, following \cite{Fulton.Lascoux}
if we let $K^\circ_T(GL_n/B)$ denote the $T$-equivariant $K$-theory ring of $GL_n/B$ then
\[K^\circ_T(GL_n/B)=K^\circ_T(pt)[x_1,\ldots,x_n]/J\cong {\mathbb Z}[y_1^{\pm 1},\ldots y_n^{\pm 1}][x_1,\ldots,x_n]/J,\]
where $J$ is as in (\ref{eqn:equivBorel}).
In these senses, one can speak of a (Laurent) polynomial ``representing'' the class of a structure sheaf of a ($T$-stable) variety in $GL_n/B$.
%Of note is a paper by [Graham-Kumar] which shows how to express
%the $K_T$ class of a permuted Schubert variety. I believe that this amounts to ``a cross multiplied divided difference trick''.

\begin{Theorem}
\label{thm:K}
The families $\{\Upsilon^K_{\gamma}(X)\}$ and $\{\Upsilon^K_{\gamma}(X;Y)\}$
are well-defined. Moreover, 
$\Upsilon^K_{\gamma}(X)$ represents $[{\mathcal O}_{Y_{\gamma}}]\in K^{\circ}(GL_n/B)$ and
$\Upsilon^K_{\gamma}(X;Y)$ represents $[{\mathcal O}_{Y_{\gamma}}]_T \in K^{\circ}_T(GL_n/B)$.
\end{Theorem}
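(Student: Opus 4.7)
The plan is to mirror the proof of Theorem~\ref{thm:equivver}, working with the specialization $\beta = 1$ (so $\mathfrak{S}^{(1)}_w$) together with the change of variables $x_i \mapsto 1 - x_i$, $y_j \mapsto (1 - y_j)/y_j$ that converts $\beta$-Schubert polynomials into double Grothendieck polynomials $\mathfrak{G}_w(X;Y)$. The $T$-equivariant statement will imply the ordinary one via the forgetful map $K^\circ_T(GL_n/B) \twoheadrightarrow K^\circ(GL_n/B)$ sending $y_j \mapsto 1$.

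First I would establish the initial conditions: for matchless $\tau$, $\Upsilon^K_\tau(X;Y)$ represents $[\mathcal{O}_{Y_\tau}]_T$. By \cite{Wyser12}, each closed $K$-orbit $Y_\tau$ is a Richardson variety, realized as the transverse intersection of the Schubert variety indexed by $u(\tau)$ and the opposite Schubert variety indexed by $v(\tau)$. In $T$-equivariant $K$-theory the structure sheaf class of such a Richardson variety factors as the product of the classes of the two Schubert varieties (the intersection being Cohen--Macaulay and generically transverse). Since $[\mathcal{O}_{X_w}]_T$ is represented by $\mathfrak{G}_w(X;Y)$ and the class of the opposite Schubert variety is represented by $\mathfrak{G}_w(X; y_n, y_{n-1}, \ldots, y_1)$, this product is exactly $\Upsilon^K_\tau(X;Y)$ as defined by the $\beta = 1$ specialization of (\ref{eqn:matchlessdef}).

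Next I would prove the $K$-theoretic analog of Proposition~\ref{prop:tworeps}: if two Laurent polynomials $f_1, f_2 \in \mathbb{Z}[y_1^{\pm 1}, \ldots, y_n^{\pm 1}][x_1, \ldots, x_n]$ both represent the same class in $K^\circ_T(GL_n/B)$ and both expand as $\mathbb{Z}[y^{\pm 1}]$-linear combinations of $\{\mathfrak{G}_w(X;Y) : w \in S_n\}$, then $f_1 = f_2$ as polynomials. The argument is the same iterative elimination: equivariant localization at the $T$-fixed point $\sigma \in S_n$ gives $f_i(Y_\sigma; Y) = [\mathcal{O}_{Y_\gamma}]_T|_\sigma$, and the triangularity $\mathfrak{G}_w(Y_\sigma; Y) = 0$ unless $\sigma \geq w$ in strong Bruhat order, together with $\mathfrak{G}_w(Y_w; Y) \neq 0$, lets one solve successively for the coefficients along a linear extension of Bruhat order. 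The fact that each $\Upsilon^K_\gamma(X;Y)$ actually lies in the required span follows from Proposition~\ref{prop:Snonly} applied at $\beta = 1$ (so the matchless initial data lies in the $\{\mathfrak{S}^{(1)}_w : w \in S_n\}$ module spanned by $S_n$), combined with the observation that $\partial^{(1)}_i$ preserves this span (or sends basis elements to $0$); the variable substitution transports all of this to the Grothendieck setting.

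With these ingredients, self-consistency and representation are simultaneous: given two chains $\gamma = s_{i_1} \cdots s_{i_m} \cdot \tau_1 = s_{j_1} \cdots s_{j_\ell} \cdot \tau_2$ in weak order from matchless clans, both $\partial^{(1)}_{i_m} \cdots \partial^{(1)}_{i_1} \Upsilon^K_{\tau_1}$ and $\partial^{(1)}_{j_\ell} \cdots \partial^{(1)}_{j_1} \Upsilon^K_{\tau_2}$ (under the Grothendieck substitution) expand in $\{\mathfrak{G}_w(X;Y) : w \in S_n\}$, and both represent $[\mathcal{O}_{Y_\gamma}]_T$ because on classes the operator $\partial^{(1)}_i$ after substitution implements the $K$-theoretic push-pull $(\pi_i)_\star \pi_i^\star$, which carries $[\mathcal{O}_{Y_{\gamma'}}]_T$ to $[\mathcal{O}_{Y_{s_i \cdot \gamma'}}]_T$ along weak order covers (as $\pi_i$ restricted to $Y_{\gamma'}$ is birational onto its image, whose preimage is $Y_{s_i \cdot \gamma'}$). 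The uniqueness statement then forces the two polynomials to agree. The ordinary (non-equivariant) statement follows by specializing $y_j \mapsto 1$. I expect the main obstacle to be bookkeeping in the initial-condition step, specifically matching the equivariant $K$-theoretic class of the opposite Schubert variety with the correct reversal of $Y$ and verifying the Richardson product formula with the conventions in force; once that is in place, the remainder is a mechanical translation of the cohomological argument.
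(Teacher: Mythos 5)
Your overall architecture is the paper's: initial conditions for the closed (indeed all non-crossing) clans via the Richardson description $Y_\gamma = X_{v(\gamma)}^{w_0u(\gamma)}$ of \cite{Wyser12}, the Fulton--Lascoux representatives $\mathfrak{G}_w(X;Y)$ and $\mathfrak{G}_{w_0w}(X;y_n,\ldots,y_1)$, and the product formula for Richardson classes; uniqueness via equivariant $K$-theory localization together with Bruhat triangularity of Grothendieck polynomials (the $K$-analogue of Proposition~\ref{prop:tworeps}); and control of the expansion by Proposition~\ref{prop:Snonly} at $\beta=1$, transported by the Grothendieck substitution. All of this is how the paper argues, so no disagreement there.

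The genuine gap is the step you justify only in passing: that $\pi_i^*(\pi_i)_*$ carries $[\mathcal{O}_{Y_{\gamma'}}]_T$ to $[\mathcal{O}_{Y_{s_i\cdot\gamma'}}]_T$ ``because $\pi_i|_{Y_{\gamma'}}$ is birational onto its image.'' Birationality (plus degree-one considerations) is exactly what suffices for fundamental classes in cohomology, but it is not sufficient for structure sheaves in $K$-theory: to get $(\pi_i)_![\mathcal{O}_{Y_{\gamma'}}]=[\mathcal{O}_{\pi_i(Y_{\gamma'})}]$ you need $(\pi_i)_*\mathcal{O}_{Y_{\gamma'}}=\mathcal{O}_{\pi_i(Y_{\gamma'})}$ (which already requires normality of the image, not merely birationality of the map) and the vanishing of all higher direct images $R^k(\pi_i)_*\mathcal{O}_{Y_{\gamma'}}$, $k>0$. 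This is precisely the point for which the paper inserts a separate proposition: it uses multiplicity-freeness of the orbit closures and \cite[Theorem 6]{Brion} to conclude that $Y_{\gamma'}$ (and $Y_{s_i\cdot\gamma'}$) have rational singularities, uses the $\mathbb{P}^1$-bundle $Y_{s_i\cdot\gamma'}\to\pi_i(Y_{\gamma'})$ to deduce rational singularities of $\pi_i(Y_{\gamma'})$, and then runs the argument of \cite[Lemma 4.12]{KK} or \cite[Theorem~3]{Fulton.Lascoux}. Without some such input your inductive step is unproved, and this is the one place where the $K$-theoretic proof genuinely differs from the $H^\star_T$ argument of Theorem~\ref{thm:equivver} that you are mirroring. (Two smaller points: the relevant operator is $\pi_i^*(\pi_i)_*$, not $(\pi_i)_*\pi_i^*$; and in the initial-condition step, generic transversality alone does not give the $K$-theoretic product formula --- one needs Tor-vanishing/reducedness of the scheme-theoretic intersection, which the paper absorbs into the cited fact $[X_u^v]=[X_u][X^v]$.)
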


The proof is exactly the same 
as in equivariant cohomology, except one must use equivariant $K$-theory localization.  This requires the now standard fact that, in equivariant $K$-theory, $[{\mathcal O}_{Y_{\gamma}}]_T|_{\sigma} = f(Y_{\sigma}; Y)$
when $f(X;Y)$ is a representative of $[{\mathcal O}_{Y_{\gamma}}]_T$ in the Borel model.  We are unaware of a specific 
reference for it in the literature, so we remark here that the argument of \cite[Proposition 1.3]{Wyser-13b} can be modified to apply to $K$-theory simply by replacing the first Chern classes of the tautological line bundles 
by (the classes of) the bundles themselves. A recent reference for equivariant
localization in $K$-theory is \cite{Harada}. The analogues of the vanishing conditions on Schubert classes (\ref{eqn:vanishing})
also hold. One also needs the following, which should also be straightforward to experts,
but for which we are also not aware of a proof in the literature:

\begin{Proposition}
The isobaric divided difference operator $\pi_i=\partial^{(1)}_i$ takes a representative of the class of 
$Y_{\gamma}$ to one for $Y_{s_i\gamma}$ in (equivariant) $K$-theory of $GL_n/B$.
\end{Proposition}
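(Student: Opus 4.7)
The plan is to translate the statement into geometry and then prove it at the level of sheaves. Write $p_i\colon GL_n/B\to GL_n/P_{s_i}$ for the natural smooth $\mathbb{P}^1$-bundle (using $p_i$ for the geometric projection to avoid conflict with the algebraic operator $\pi_i=\partial_i^{(1)}$). A standard computation in the Borel model identifies the action of $\pi_i$ on representatives with the $K$-theoretic push--pull $p_i^*\,(p_i)_*$; see, e.g., \cite{Fulton.Lascoux}. This identification holds in both ordinary and $T$-equivariant $K$-theory, the equivariant case following by the same computation while tracking the $T$-action. It therefore suffices to prove the sheaf-level equality
\[
p_i^*\,(p_i)_*\,[\mathcal{O}_{Y_\gamma}]_T \;=\; [\mathcal{O}_{Y_{s_i\gamma}}]_T
\qquad \text{in } K^\circ_T(GL_n/B).
\]

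The argument then divides into two cases according to whether $Y_\gamma$ is $P_{s_i}$-stable. In the stable case, $s_i\gamma=\gamma$ by the definition of the weak order, and the restriction $p_i|_{Y_\gamma}\colon Y_\gamma\to Z:=p_i(Y_\gamma)$ is itself a $\mathbb{P}^1$-bundle. Since $H^0(\mathbb{P}^1,\mathcal{O})$ is the ground field and $H^j(\mathbb{P}^1,\mathcal{O})=0$ for $j>0$, cohomology and base change give $R(p_i|_{Y_\gamma})_*\mathcal{O}_{Y_\gamma}=\mathcal{O}_Z$, and flat pullback then recovers $p_i^*\mathcal{O}_Z=\mathcal{O}_{p_i^{-1}(Z)}=\mathcal{O}_{Y_\gamma}$. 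In the unstable case, the combinatorial description (a)--(b) of weak order on clans forces $Y_{s_i\gamma}=p_i^{-1}(p_i(Y_\gamma))$ to have dimension one more than $Y_\gamma$, and $p_i|_{Y_\gamma}\colon Y_\gamma\to Z$ is birational (a generic fibre of $p_i$ meets $Y_\gamma$ in exactly one reduced point). Granting the sheaf identity $R(p_i|_{Y_\gamma})_*\mathcal{O}_{Y_\gamma}=\mathcal{O}_Z$ in this case as well, flat pullback yields $p_i^*[\mathcal{O}_Z]_T=[\mathcal{O}_{Y_{s_i\gamma}}]_T$, as required.

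The main obstacle is verifying this last sheaf identity in the birational case: both $(p_i|_{Y_\gamma})_*\mathcal{O}_{Y_\gamma}=\mathcal{O}_Z$ and $R^1(p_i|_{Y_\gamma})_*\mathcal{O}_{Y_\gamma}=0$. This is a rational-resolution statement for the birational map $p_i|_{Y_\gamma}$, and the cleanest route is to invoke M.~Brion's theorem that orbit closures of spherical subgroups of $G/B$ have rational singularities, which applies in our setting since $K=GL_p\times GL_q$ is spherical. Alternatively, one can argue directly: the vanishing of the higher direct image on a dense open of $Z$ is immediate from the fibre description, while the equality on sections follows from Zariski's main theorem once the normality of $Z$ is established, which is again guaranteed by the spherical orbit-closure structure. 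With these geometric ingredients in place, the rest of the proof is formal manipulation with flat pullback and the projection formula, and carries over verbatim to the $T$-equivariant setting.
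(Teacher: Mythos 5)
Your overall architecture is the same as the paper's: identify $\pi_i=\partial_i^{(1)}$ with the $K$-theoretic push--pull $p_i^*(p_i)_*$ (pointing to \cite{Fulton.Lascoux}, as the paper does, together with \cite{KK}), reduce the statement to the sheaf identity $R(p_i)_*\mathcal{O}_{Y_\gamma}=\mathcal{O}_{p_i(Y_\gamma)}$, and deduce that identity from rational singularities of the source and of the image of the contraction. The genuine gap is in how you justify those rational-singularity statements. You invoke ``Brion's theorem that orbit closures of spherical subgroups of $G/B$ have rational singularities.'' No such blanket theorem is available: for general spherical (even symmetric) subgroups, orbit closures need not be normal, let alone have rational singularities. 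Brion's actual result, cited in the paper as \cite[Theorem 6]{Brion}, gives rational singularities under the additional hypothesis that the orbit closure is \emph{multiplicity-free}, and the paper's proof routes through exactly this: every $GL_p\times GL_q$-orbit closure is multiplicity-free (\cite{Brion}, elaborated in \cite{Wyser12}), hence $Y_\gamma$ has rational singularities; and rational singularities of $Z=p_i(Y_\gamma)$ are then obtained not from any general statement about images in $G/P_{s_i}$, but from the fact that $Y_{s_i\gamma}$ --- again multiplicity-free, hence with rational singularities --- is a ${\mathbb P}^1$-bundle over $Z$. Your argument becomes correct once these substitutions are made, but as written its key geometric input rests on a theorem that does not exist.

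Two further points. Your ``alternative direct argument'' does not close the gap: vanishing of $R^1(p_i)_*\mathcal{O}_{Y_\gamma}$ on a dense open subset of $Z$ does not rule out that it is supported over the locus where the fibres are one-dimensional, and Zariski's main theorem (plus normality of $Z$, which you again attribute to the unavailable blanket statement) only controls $(p_i)_*$, not $R^1$; the $R^1$-vanishing genuinely requires the resolution/spectral-sequence argument, i.e.\ rational singularities of both $Y_\gamma$ and $Z$. Finally, your assertion that in the unstable case a general fibre of $p_i$ meets $Y_\gamma$ in a single reduced point is true for this symmetric pair (case (a) coverings are of complex type, and case (b) coverings are of type I, the two clans obtained from $+-$ and $-+$ both lying below $s_i\cdot\gamma$), but it does not follow formally from the list (a)--(b) alone and should be justified, since a type II situation would make the map generically two-to-one and break the computation.
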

\begin{proof}
Let $Y = Y_{\gamma}$, and $Y' = Y_{s_i \gamma}$.  First, recall that all orbit closures for this case are multiplicity-free, meaning that their cycle classes in the Chow ring can be expressed in the Schubert basis with all coefficients $0$ or $1$.  This is noted in \cite{Brion} and further 
elaborated upon in \cite{Wyser12}.  Thus by \cite[Theorem 6]{Brion}, $Y$ has rational singularities.  Let $\pi: G/B \rightarrow G/P_{\ga_i}$ be the natural projection where $P_{\alpha_i}$ is the minimal parabolic associated
to $\alpha_i$.  Since $Y' = \pi^{-1}(\pi(Y))$ is a ${\mathbb P}^1$-bundle over $\pi(Y)$, and since $Y'$ has rational singularities (being another multiplicity-free $K$-orbit closure), $\pi(Y)$ has rational singularities as well.

Now we note that the proof of \cite[Lemma 4.12]{KK} or
\cite[Theorem~3]{Fulton.Lascoux}, given there for (equivariant) $K$-classes of Schubert varieties, applies to the case at hand.  
\excise{
Indeed, one argues as suggested there that the rationality of singularities of both $Y$ and $\pi(Y)$ implies that 
\[ \pi_![\caO_Y] = [\caO_{\pi(Y)}] \]
as elements of $K_0(H,G/P_i)$.  The argument is as follows.  Consider a desingularization of $Y$ followed by the projection $\pi$:
\[ Z \stackrel{f}{\longrightarrow} Y \stackrel{\pi}{\longrightarrow} \pi(Y). \]
The map $\pi \circ f$ is birational and proper, since both $\pi$ and $f$ are, so normality of $\pi(Y)$ implies that 
\[ (\pi \circ f)_* \caO_Z = \pi_* f_* \caO_Z = \caO_{\pi(Y)}. \]
By normality of $Y$, we have that $f_* \caO_Z = \caO_Y$, and so $\pi_*\caO_Y = \caO_{\pi(Y)}$.  Now, by definition, 
\[ \pi_![\caO_Y] = \sum_{k \geq 0} (-1)^k [R^k \pi_* \caO_Y], \]
so it suffices to show that $R^k \pi_* \caO_Y = 0$ for $k > 0$.  For this, we use rationality of singularities of both $Y$ and $\pi(Y)$.  Consider the Grothendieck spectral sequence 
\[ E_2^{i,j} = (R^i \pi_* \circ R^j f_*)(\caO_Z) \Rightarrow R^{i+j} \pi_* f_* \caO_Z. \]
Since $Y$ has rational singularities, $R^j f_* \caO_Z = 0$, so $R^i \pi_* \circ f_* \caO_Z = R^i (\pi \circ f)_* \caO_Z$.  Since $\pi(Y)$ has rational singularities, this vanishes for $i>0$.  Thus $R^i \pi_* \caO_Y = 0$, as desired.

From here, we simply note that
\[ \pi^* \pi_! [\caO_Y] = \pi^* [\caO_{\pi(Y)}] = [\caO_{\pi^{-1}(\pi(Y))}] = [\caO_{Y'}] \]
on the one hand, and 
\[ \pi^* \pi_! [\caO_Y] = * D_i(* [\caO_Y]) \]
on the other, where $D_i$ is the $i$th Demazure operator, see \cite[Proposition 4.11]{KK}.  Thus 
\[ D_i(* [\caO_Y]) = * [\caO_{Y'}], \]
as claimed.

*** We need to sort out how the * ``dual" operators work.  Is $* D_i *$ the isobaric?  Or is $D_i$ the isobaric? ***

\comment{Add some discussion about how the ``algebraic'' $\pi_i$ matches with the ``geometric $\pi_i$''; maybe this is just \cite{Fulton.Lascoux}.}}
\end{proof}

\noindent
\emph{Conclusion of proof of Theorems~\ref{thm:intro},~\ref{thm:equivver}
and~\ref{thm:K}:} It remains to show that the proposed representatives are indeed representatives
for the closed orbits. This follows from three facts. First, by \cite{Wyser12}, when $\gamma$ is non-crossing, 
\[Y_{\gamma}=X_{v(\gamma)}^{w_0u(\gamma)}:={\overline{B_{-}v(\gamma)B/B}}\cap {\overline{Bw_0 u(\gamma)B/B}}.\]
Second, in the case of equivariant $K$-theory, the representative of the Schubert variety $X_{w}$ is ${\mathfrak G}_w(X;Y)$; this is proved
in \cite[Theorem~3]{Fulton.Lascoux}. It also follows from \emph{loc. cit.} that 
 ${\mathfrak G}_{w_0w}(X;y_n,y_{n-1},\ldots,y_1)$ represents the opposite Schubert variety
$X^w={\overline{BwB/B}}$. Similarly, it is known that ${\mathfrak S}_w(X)$, ${\mathfrak S}_w(X;Y)$
and ${\mathfrak G}_w(X)$ represent the Schubert classes in the corresponding cohomology theories, and ${\mathfrak S}_{w_0w}(X)$,
${\mathfrak S}_{w_0w}(X;y_n,\ldots,y_1)$ and ${\mathfrak G}_{w_0w}(X)$ represent the opposite Schubert classes.
Finally, $[X_u^v]=[X_u][X^v]$ (interpreted in any of the cohomology theories
we are using). \qed

\begin{Remark}[Positivity]
The argument of the introduction that $\Upsilon_{\gamma}(X)\in {\mathbb Z}_{\geq 0}[x_1,\ldots,x_n]$ extends to prove appropriate notions of ``positivity'' for each of the given representatives associated to the other three cohomology theories. This is since in each
case there is an available notion of positivity of Schubert calculus. See \cite{AGM} and the references therein.\qed
\end{Remark}

\begin{Remark}
Consider $X_{2143}^{3412}$. It is true that
${\mathfrak S}_{2143}(x_1,x_2,x_3,x_4)^2=x_1^4+\ldots$ represents the class of the Richardson variety. However, this polynomial is not the normal form representative of its coset because it involves $x_1^4$ (cf. Proposition~\ref{cor:normalform} and see also \cite{Lenart.Sottile}). 
This emphasizes the role of Proposition~\ref{prop:Snonly} in our proofs.\qed 
\end{Remark}

\begin{Remark}
Our arguments show that if any collection of varieties in $GL_n/B$ have their classes related by 
(isobaric) divided difference operators then any choice of polynomial representatives
for their minimal elements that expand into Schubert polynomials
from $S_n$ gives a self-consistent family of representatives. In particular 
this can also be applied to the
cases where $(G,K)=(GL_{2n}, Sp_{2n})$ and $(G,K)=(GL_n,O_n)$; cf. \cite{Wyser-13b} and \cite{WyYo2}.
\end{Remark}

\section{The $K$-orbit determinantal ideal}

\subsection{Geometric naturality of $\Upsilon_{\gamma}^{(\beta)}$} 
The {\bf $K$-orbit determinantal ideal} $I_\gamma$ is defined as follows.
Fix $\gamma\in {\tt Clan}_{p,q}$.  For $i=1,\hdots,n$, let:
\begin{itemize}
	\item $\gamma(i;+)$ = the total number of $+$'s and matchings in the first $i$ vertices, and
	\item $\gamma(i;-)$ = the total number of $-$'s and matchings in the first $i$ vertices.
\end{itemize}

For $1 \leq i < j \leq n$, define
\begin{itemize}
	\item $\gamma(i;j) = \#\{k \in [1,i] \mid \mbox{$k$ and $l$ are matched} \text{ and } l > j\}$.
\end{itemize}

Let $R_{+}(\gamma)$ be the vector with $i$-th entry equal to $i+1-\gamma(i;+)$
and $R_{-}(\gamma)$ be the vector with $i$-th entry $i+1-\gamma(i;-)$. Also,
let $W(\gamma)$ be the $n\times n$ matrix whose $(i,j)$-th entry
is $j+\gamma(i;j)+1$ if $i<j$ and is zero otherwise.

	Identify ${\rm Fun}({\rm Mat}_{n\times n})$ with ${\mathbb C}[z_{i,j}]$ where $z_{i,j}$ is the coordinate
function of matrix coordinate $(i,j)$. Let $M_n$ be the generic $n\times n$ matrix with entry $z_{i,j}$. Now define
$I_\gamma$ to have the following generators:

\begin{enumerate}
	\item[(i)] For each $i = 1,\hdots,n$, the minors of size 
$R_{+}(\gamma)_i$ of the lower-left $q \times i$ submatrix of $M_n$.
	\item[(ii)] For each $i =  1,\hdots,n$, the minors of size 
$R_{-}(\gamma)_i$ of the upper-left $p \times i$ submatrix of $M_n$.
	\item[(iii)] For each $1 \leq i < j \leq n$, the minors of size 
$W(\gamma)_{ij}$ of the following $n \times (i+j)$ matrix $P_{i,j}$:   The upper-left $p \times i$ block coincides with the upper-left $p \times i$ block of $M_n$, the lower-left $q \times i$ block is zero, and the last $j$ columns coincide with the first $j$ columns of $M_n$.
\end{enumerate}

\begin{Example}
Let $\gamma=
\begin{picture}(30,10)
\put(0,0){\epsfig{file=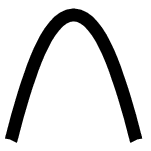, height=.4cm}}
\put(11,1){$+-$}
\end{picture}$.
 Then
\[
\begin{array}{ccc}
\gamma_+ & = & (0,1,2,2)\\
R_{+}(\gamma) & = & (2,{\underline 2},{\underline 2},3)
\end{array}
, \ 
\begin{array}{ccc}
\gamma_{-} & = & (0,1,1,2)\\
R_{-}(\gamma) & = & (2,{\underline 2},3,3)
\end{array}, \ 
W(\gamma)=\begin{tabular}{c|cccc}
$i$/$j$ & $1$ & $2$ & $3$ & $4$ \\
\hline
$1$ & \ & \underline{$3$} & \underline{$4$} & $5$\\
$2$ & \ & \ & {\underline{$4$}} & $5$\\
$3$ & \ & \ & \ & $5$ \\
$4$ & \ & \ & \ & \ \\
\end{tabular}\]
Not all rank conditions give rise to non-trivial minors; we have underlined those that do. Specifically $R_+$ demands that the $2\times 2$ minors
of the southwest $2\times 3$ submatrix of
\[M_4=\left(\begin{matrix}
z_{11} & z_{12} & z_{13} & z_{14}\\
z_{21} & z_{22} & z_{23} & z_{24}\\
z_{31} & z_{32} & z_{33} & z_{34}\\
z_{41} & z_{42} & z_{43} & z_{44}
\end{matrix}\right)\]
be among the generators. $R_{-}$ contributes the $2\times 2$ northwest minor
of this matrix. Here,
\[P_{1,2}=\left(\begin{matrix}
z_{11} & z_{11} & z_{12} \\
z_{21} & z_{21} & z_{22} \\
0 & z_{31} & z_{32} \\
0 & z_{41} & z_{42} 
\end{matrix}\right),
P_{1,3}=\left(\begin{matrix}
z_{11} & z_{11} & z_{12} & z_{13}\\
z_{21} & z_{21} & z_{22} & z_{23}\\
0 & z_{31} & z_{32} & z_{33}\\
0 & z_{41} & z_{42} & z_{43}
\end{matrix}\right),
P_{2,3}=\left(\begin{matrix}
z_{11} &  z_{12} & z_{11} & z_{12} & z_{13}\\
z_{21} & z_{22} & z_{21} & z_{22} & z_{23}\\
0 & 0 & z_{31} & z_{32} & z_{33}\\
0 & 0 & z_{41} & z_{42} & z_{43}
\end{matrix}\right).
\]
The conditions from $W(\gamma)$ say that we add the $3\times 3$ minors
of $P_{1,2}$ and the $4\times 4$ minors of $P_{1,3}$ and of $P_{2,3}$.

Actually, the rank conditions from $R_{+}$ and $R_{-}$
already imply the minors from $W(\gamma)$. This is true for all 
non-crossing $\gamma$, as explained below. 
\qed
\end{Example}

Our reference for combinatorial commutative algebra, specifically the notion of multigrading, multidegree and
$K$-polynomial is \cite[Chapter~8]{Miller.Sturmfels} as well as the connection to equivariant cohomology. For brevity, we refer the reader to that textbook for basic
definitions and notions.

Let $\prec_{p,q}$ be the lexicographic term order on monomials in $\{z_{i,j}\}$ that orders the variables by reading the
bottom $q$ rows, from left to right and from bottom to top, followed by the top $p$ rows from left to right and from top to bottom.
The $T\times T$ action on $M_n$ restricts to an action on $M_{\gamma}=\overline{\pi^{-1}(Y_{\gamma})}$. The associated grading associated to multidegrees
assigns the variable $z_{ij}$ the weight $x_j-y_i$. For $K$-polynomials, the grading assigns $z_{ij}$ the weight $1-\frac{x_j}{y_i}$. 

The following result explains the geometric naturality of our choices for representatives of the closed orbits.
It also applies more generally to orbit closures indexed by
non-crossing clans.

\begin{Theorem}
\label{claim:main}
Suppose $\gamma$ is non-crossing. Then $M_{\gamma}$ is scheme-theoretically cut out by $I_{\gamma}$. Also:
\begin{itemize}
\item[(I)] The defining equations of $I_{\gamma}$ 
form a Gr\"{o}bner basis with squarefree lead terms, with respect to the
term order $\prec_{p,q}$. 
\item[(II)] The
Gr\"{o}bner limit ${\rm init}_{\prec_{p,q}}(I_{\gamma})$ 
has a prime decomposition whose components are naturally indexed by pairs of semistandard tableaux $(T,U)$ 
where 
\begin{itemize}
\item[$\bullet$] $T$ is a flagged tableaux of shape $\lambda(\gamma)$ with flagging ${\vec f}(\lambda(\gamma))$; and
\item[$\bullet$] $U$ is a flagged tableaux of shape $\mu(\gamma)$ with flagging ${\vec f}(\mu(\gamma))$.
\end{itemize}
\item[(III)] ${\rm multidegree}_{{\mathbb Z}^{2n}}({\mathbb C}[Z]/I_{\gamma}) =  \Upsilon_{\gamma}(X;Y)$ and ${\mathcal K}_{{\mathbb Z}^{2n}}({\mathbb C}[Z]/I_{\gamma}) =  \Upsilon^K_{\gamma}(X;Y)$.
\end{itemize}
\end{Theorem}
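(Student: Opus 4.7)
The plan is to reduce the theorem to known results about Schubert determinantal ideals, exploiting the fact (from \cite{Wyser12}) that for non-crossing $\gamma$, $Y_\gamma$ is the Richardson variety $X_{v(\gamma)}^{w_0 u(\gamma)}$. The guiding principle is: the conditions (i) coming from $R_{+}(\gamma)$ encode the opposite Schubert variety $\overline{Bw_0 u(\gamma)B/B}$ (these are southwest rank conditions), the conditions (ii) coming from $R_{-}(\gamma)$ encode the Schubert variety $\overline{B_{-}v(\gamma)B/B}$ (these are northwest rank conditions), while the mixed conditions (iii) coming from $W(\gamma)$ are the rank conditions coming from intersecting the two. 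When $\gamma$ is non-crossing, I expect the type (iii) conditions to be redundant modulo (i) and (ii); this will follow by comparing the ranks dictated by $W(\gamma)$ with those forced by the combined northwest and southwest constraints, using the non-crossing hypothesis to guarantee no overlapping arcs produce independent conditions.

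Once the redundancy is established, the strategy is to write $I_\gamma = I_{v(\gamma)} + I^{\mathrm{op}}_{u(\gamma)}$, where $I_{v(\gamma)}$ is Fulton's Schubert determinantal ideal and $I^{\mathrm{op}}_{u(\gamma)}$ is its opposite (obtained by reading the matrix from the southwest). By Lemma~\ref{lemma:isvex}, $u(\gamma)$ and $v(\gamma)$ are vexillary. Hence the main Gr\"obner basis and multidegree theorems of \cite{KMY} apply to each piece separately: the Fulton generators are a Gr\"obner basis under the appropriate (antidiagonal-type) order, with squarefree lead terms whose prime decomposition of the Gr\"obner limit is indexed by flagged semistandard tableaux of shape $\lambda(\gamma)$ (resp. $\lambda(\widehat\gamma)$) with flaggings $\vec f(\gamma)$ (resp. $\vec f(\widehat\gamma)$). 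The term order $\prec_{p,q}$ is built precisely so that the antidiagonal lead terms of type (i) minors live in the bottom $q$ rows, while those of type (ii) minors live in the top $p$ rows; these involve disjoint sets of variables. Consequently, the union of the two Gr\"obner bases is a Gr\"obner basis for $I_\gamma$, establishing (I), and the prime decomposition of $\mathrm{init}_{\prec_{p,q}}(I_\gamma)$ factors as a ``product'' of the two individual decompositions, yielding the tableau-pair indexing of (II).

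For part (III), the multidegree of a sum of ideals whose initial ideals involve disjoint variable sets equals the product of the individual multidegrees (and the analogous statement holds for $K$-polynomials, via \cite[Chapter~8]{Miller.Sturmfels}). By \cite{KMY} and the vexillarity of $u(\gamma),v(\gamma)$, each factor is the corresponding double Schubert (resp. double Grothendieck) polynomial; combining with Proposition~\ref{prop:KMY} identifies the product with $\mathfrak{S}^{(\beta)}_{u(\gamma)}(X;y_n,\ldots,y_1) \cdot \mathfrak{S}^{(\beta)}_{v(\gamma)}(X;Y)$, which is exactly $\Upsilon^{(\beta)}_\gamma(X;Y)$ by definition~\eqref{eqn:matchlessdef}. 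The convention of reading $Y$ in reverse for the $u(\gamma)$-factor matches the fact that the opposite Schubert variety is represented by $\mathfrak{S}_{w_0 u(\gamma)}$ with $Y$ reversed (cf. \cite{Fulton.Lascoux}). Finally, the scheme-theoretic statement that $I_\gamma$ cuts out $M_\gamma$ follows because $M_\gamma$ is the scheme-theoretic intersection of the preimages of the two Schubert varieties (being reduced and irreducible, and Cohen--Macaulay by rational singularities of Richardson varieties), and each preimage is scheme-theoretically cut out by its Fulton ideal.

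The main obstacle will be the reduction step: verifying that the $W(\gamma)$-generators are contained in the ideal generated by (i) and (ii) for non-crossing $\gamma$. This requires a careful bookkeeping argument translating the arc structure of $\gamma$ into rank bounds on the mixed matrices $P_{i,j}$, and demonstrating that non-crossing forbids the sort of ``independent'' rank drop that would make a type (iii) minor non-redundant. A secondary technical point is carefully matching the antidiagonal term order of \cite{KMY} with $\prec_{p,q}$ on each block, so that the two Gr\"obner bases genuinely fit together with no interference between the top-$p$ and bottom-$q$ lead monomials.
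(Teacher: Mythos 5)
Your overall skeleton coincides with the paper's: split the generators of type (i) and (ii) into two vexillary Schubert determinantal ideals living in the bottom $q$ and top $p$ rows respectively, invoke \cite[Theorem~3.8]{KMY} for each, observe that the two Gr\"obner bases involve disjoint variables so their union is Gr\"obner for the ideal sum (the paper's Lemma~\ref{lemma:easygrobner}), and obtain (II) and (III) from the product structure together with Proposition~\ref{prop:KMY}. The genuine gap is the step you yourself flag as the main obstacle: showing that the type (iii) generators are redundant. A direct ``bookkeeping'' derivation of the $W(\gamma)$-minors from the minors of types (i) and (ii) is not carried out in your proposal, and as an ideal-membership statement it would be a nontrivial determinantal computation. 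The paper avoids it entirely: it proves that the ideal $\widetilde I_\gamma$ generated by (i) and (ii) alone is \emph{prime} (squarefree Gr\"obner lead terms give radicality by semicontinuity, and $V(\widetilde I_\gamma)$ is irreducible, being a product of two matrix Schubert varieties), identifies $V(\widetilde I_\gamma)$ with $\overline{\pi^{-1}(X_{u(\gamma)}^{v(\gamma)})}$, and then uses the geometric input $Y_\gamma=X_{u(\gamma)}^{v(\gamma)}$ from \cite{Wyser12} together with Lemma~\ref{lemma:vanish} (the type (iii) minors vanish on $\pi^{-1}(Y_\gamma)$, hence on its closure) and the Nullstellensatz to conclude $I_\gamma=\widetilde I_\gamma$. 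So if you keep your Gr\"obner/radicality argument for $\widetilde I_\gamma$, you do not need any rank-implication combinatorics: set-theoretic vanishing of the (iii) generators on $\pi^{-1}(Y_\gamma)$ plus primality of $\widetilde I_\gamma$ already gives the ideal equality, and the scheme-theoretic statement and (I) follow at once (adding elements of the ideal, with squarefree lead terms, to a Gr\"obner basis does no harm). Your alternative route to the scheme-theoretic claim via Cohen--Macaulayness and rational singularities of Richardson varieties is not needed and, as stated, conflates properties of $M_\gamma$ with the reducedness of the ideal sum, which is exactly what the squarefree Gr\"obner degeneration is there to supply.

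A second, smaller slip is in (III): the identification of the product $\mathfrak{S}_{u(\gamma)}(X;y_n,\ldots,y_1)\cdot\mathfrak{S}_{v(\gamma)}(X;Y)$ with $\Upsilon_\gamma(X;Y)$ is \emph{not} ``by definition'' (\ref{eqn:matchlessdef}) unless $\gamma$ is matchless; for a general non-crossing $\gamma$, $\Upsilon_\gamma$ is defined by divided differences from a matchless clan, and one must argue the two agree. The paper does this by noting both are representatives of $[Y_\gamma]_T$ that expand into double Schubert polynomials indexed by $S_n$ (Proposition~\ref{prop:Snonly}), hence coincide by Proposition~\ref{prop:tworeps}. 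You should add this uniqueness step (or an equivalent argument) to close (III) for non-matchless non-crossing clans.
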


\begin{Example}
Continuing our previous example, one checks that 
\begin{eqnarray}\nonumber 
{\rm init}_{\prec_{2,2}}(I_{\begin{picture}(30,10)
\put(0,0){\epsfig{file=matchingsSection4.eps, height=.4cm}}
\put(11,1){$+-$}
\end{picture}})& =& \langle z_{42}z_{33}, 
z_{41}z_{33}, z_{41}z_{32}, z_{11}z_{22}\rangle\\ \nonumber
\ & = & \langle z_{11}, z_{41},z_{42}\rangle \cap \langle z_{11},z_{41},z_{33}\rangle \cap \langle z_{11},z_{32}, z_{33}\rangle \cap \langle z_{22},z_{41},z_{42}\rangle \cap \\ \nonumber
\ & \ & \langle z_{22},z_{41},z_{33}\rangle \cap 
 \langle z_{22},z_{32},z_{33}\rangle. \nonumber
\end{eqnarray}
 Now consider the pipe diagrams associated to each prime component
of ${\rm init}_{\prec_{2,2}}(I_{\begin{picture}(30,10)
\put(0,0){\epsfig{file=matchingsSection4.eps, height=.4cm}}
\put(11,1){$+-$}
\end{picture}})$: we define them to be obtained by placing a ``$+$'' in 
position $(i,j)$ if $z_{ij}$ appears in the component. These are respectively:
\[\left[\ \begin{matrix}
+ & . & . & . \\
. & . & . & . \\
\hline
. & . & . & . \\
+ & + & . & . \\
\end{matrix}\ \right], 
\left[\ \begin{matrix}
+ & . & . & . \\
. & . & . & . \\
\hline
. & . & + & . \\
+ & . & . & . \\
\end{matrix}\ \right], 
\left[\ \begin{matrix}
+ & . & . & . \\
. & . & . & . \\
\hline
. & + & + & . \\
. & . & . & . \\
\end{matrix}\ \right], 
\left[\ \begin{matrix}
. & . & . & . \\
. & + & . & . \\
\hline
. & . & . & . \\
+ & + & . & . \\
\end{matrix}\ \right], 
\]
\[\left[\ \begin{matrix}
. & . & . & . \\
. & + & . & . \\
\hline
. & . & + & . \\
+ & . & . & . \\
\end{matrix}\ \right], 
\left[\ \begin{matrix}
. & . & . & . \\
. & + & . & . \\
\hline
. & + & + & . \\
. & . & . & . \\
\end{matrix}\ \right].\]
To compute the ${\mathbb Z}^{2n}$ multidegree one uses additive grading that assigns $z_{i,j}$ the weight $x_j-y_i$.
Then
\begin{eqnarray}\nonumber
{\rm multidegree}_{{\mathbb Z}^{2n}}({\mathbb C}[Z]/I_{\gamma}) & = & (x_1-y_4)(x_2-y_4)\cdot  (x_1-y_1)
+ (x_1-y_4)(x_3-y_3)\cdot (x_1-y_1) \\ \nonumber
& & + (x_3-y_2)(x_3-y_3)\cdot (x_1-y_1) + (x_1-y_4)(x_1-y_3)\cdot (x_2-y_2) \\ \nonumber
& & + (x_1-y_4)(x_3-y_3)\cdot (x_2-y_2) + (x_2-y_3)(x_3-y_3)\cdot (x_2-y_2) \nonumber
\end{eqnarray}
In each term, we use ``$\cdot$'' to separate the factors coming from $+$'s below and above the horizontal line
of the corresponding pipe diagram. Factoring gives
\begin{eqnarray}\nonumber
& [(x_1-y_4)(x_2-y_4)+(x_1-y_4)(x_2-y_3)+(x_2-y_3)(x_3-y_3)]\cdot  [(x_1-y_1)+(x_2-y_2)]\\ \nonumber
= & s_{(1,1),(2,3)}(x_1,x_2,x_3,x_4;y_4,y_3,y_2,y_1)s_{(1,0),(2,4)}(x_1,x_2,x_3,x_4;y_1,y_2,y_3,y_4)\nonumber
\end{eqnarray}
in agreement with the theorem. One can also similarly verify the $K$-polynomial claim by computing the $K$-polynomial
of the simplicial complex associated to ${\rm init}_{\prec_{p,q}}I_{\gamma}$.\qed
\end{Example}

\noindent
\emph{Proof of Theorem~\ref{claim:main}:} We recall \cite[Theorem 2.5]{Wyser-13a}: Let
\[E_p={\rm span}\{\vec e_1,\vec e_2,\ldots,\vec e_p\} \mbox{\ and \ }  E^q={\rm span}\{\vec e_{p+1}, \vec e_{p+2},\ldots, \vec e_n\}, \] 
where $\vec e_i$ is the $i$-th standard basis vector of $\C^n$ and $\rho: \C^n \rightarrow E_p$ is the natural projection map. 
\begin{Theorem}
\label{thm:wyserbruhat}
$Y_{\gamma}$ is the set of flags $F_{\bullet}$ such that the following three conditions hold:
\begin{enumerate}
	\item $\dim(F_i \cap E_p) \geq \gamma(i;+)$ for all $i$;
	\item $\dim(F_i \cap E^q) \geq \gamma(i;-)$ for all $i$;
	\item $\dim(\rho(F_i) + F_j) \leq j + \gamma(i;j)$ for all $i<j$.
\end{enumerate}
\end{Theorem}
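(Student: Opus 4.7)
The plan is to prove this by a two-sided containment $Y_\gamma = Z_\gamma$, where $Z_\gamma$ denotes the closed subvariety of $GL_n/B$ cut out by the three numerical conditions (1)--(3).

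For the inclusion $Y_\gamma \subseteq Z_\gamma$, I would first construct an explicit representative flag $F_\bullet^\gamma$ for the orbit $\caO_\gamma$, built directly from the clan data: each unpaired $+$ at position $i$ contributes a standard basis vector of $E_p$ to $F_i$, each unpaired $-$ contributes a basis vector of $E^q$, and an arc from position $i$ to position $j$ (with $i<j$) contributes a mixed vector $e_a + e_b$ (with $e_a \in E_p$, $e_b \in E^q$) at step $i$ and then $e_a$ (or $e_b$) separately at step $j$. For this explicit flag the three dimension quantities can be computed by direct count, and equality in each of (1)--(3) follows by matching the counts against the definitions of $\gamma(i;+)$, $\gamma(i;-)$, and $\gamma(i;j)$. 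Since $K = GL_p \times GL_q$ preserves $E_p$, $E^q$, and hence the projection $\rho$, each of the three conditions is $K$-stable; being rank/dimension inequalities on incidence subvarieties of the flag variety, they are also Zariski-closed. Hence they hold on $\overline{K \cdot F_\bullet^\gamma} = Y_\gamma$.

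For the reverse inclusion $Z_\gamma \subseteq Y_\gamma$, I would invoke the parametrization of $K$-orbits by clans due to Matsuki--Oshima and Yamamoto: the orbit $\caO_\beta$ through a flag $F_\bullet$ is uniquely determined by, and can be read off from, the invariants $\dim(F_i \cap E_p)$, $\dim(F_i \cap E^q)$, $\dim(\rho(F_i) + F_j)$, which for a representative of $\caO_\beta$ equal $\beta(i;+)$, $\beta(i;-)$, and $j + \beta(i;j)$ respectively. If $F_\bullet \in Z_\gamma$ lies in $\caO_\beta$, then the defining inequalities of $Z_\gamma$ translate directly to the system
\[ \beta(i;+) \geq \gamma(i;+), \quad \beta(i;-) \geq \gamma(i;-), \quad \beta(i;j) \leq \gamma(i;j) \]
for all relevant $i,j$. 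It then suffices to show this system implies $\beta \preceq \gamma$ in the closure order on clans, which would give $\caO_\beta \subseteq \overline{\caO_\gamma} = Y_\gamma$.

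The main obstacle is this last combinatorial step: translating the numerical inequalities on the clan statistics into the generators of the closure order on $\mathtt{Clans}_{p,q}$. My plan is to induct on $\ell(\gamma) - \ell(\beta)$, producing at each stage an elementary cover $\beta' \succ \beta$ with $\beta' \preceq \gamma$, chosen so as to reduce the first nontrivial slack in one of the three families of inequalities. The case analysis splits naturally according to whether the slack arises from an unpaired sign, an endpoint of an arc, or a pair of nested/overlapping arcs, mirroring the covering moves (a) and (b) of the weak order description recalled in Section~2. Once this combinatorial lemma is in place, the two containments combine to give $Y_\gamma = Z_\gamma$.
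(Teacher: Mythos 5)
The paper does not prove this statement at all: it is recalled verbatim as \cite[Theorem 2.5]{Wyser-13a}, so the only ``proof'' in the paper is a citation to the first author's earlier work. Your proposal therefore attempts something the paper deliberately outsources, and it is worth judging on its own terms. The first inclusion $Y_\gamma \subseteq Z_\gamma$ is sound in outline: the representative flag you describe is essentially the $F_\bullet^{\gamma,\sigma}$ of Section~4.1, the Matsuki--Oshima/Yamamoto description gives equality of the three invariants on the open orbit, and the conditions are closed and $K$-stable, hence pass to the closure.

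The genuine gap is in the reverse inclusion. The statement ``the system $\beta(i;+)\geq\gamma(i;+)$, $\beta(i;-)\geq\gamma(i;-)$, $\beta(i;j)\leq\gamma(i;j)$ implies $\beta\preceq\gamma$ in the closure order'' is not a routine lemma to be filled in later --- it \emph{is} the theorem, and you have only announced a plan for it. Worse, the plan as stated is likely to fail: you propose to build a chain of covers ``mirroring the covering moves (a) and (b) of the weak order,'' but the closure order on clans is strictly larger than the transitive closure of the weak order covers. For symmetric subgroups there are containments $\caO_\beta\subseteq\overline{\caO_\gamma}$ admitting no saturated chain of weak-order covers between $\beta$ and $\gamma$ (this is precisely why Richardson--Springer work with the full ``standard order'' rather than the weak order). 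So your induction would need the covering relations of the full closure order on clans, whose combinatorial description is considerably more delicate, and the case analysis you sketch does not engage with that. As written, the argument establishes only $Y_\gamma\subseteq Z_\gamma$ together with a correct reformulation of what remains to be shown.
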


\noindent
Recall that $\pi:GL_n\to GL_n/B$ is the natural map. Consider the following diagram:
\[\begin{CD}
\pi^{-1}(Y_{\gamma}) @. \ \ \subset \ \ @. GL_n @. \subset {\rm Mat}_n\supset \overline{\pi^{-1}(Y_\gamma)}:=M_{\gamma}\subseteq V(I_\gamma)\\
                      @. @.        @VV{\pi}V\\
Y_{\gamma} @.\  \subset \ \ @.  GL_n/B @.
\end{CD}\]

\begin{Lemma}
\label{lemma:vanish}
Suppose $g\in GL_n$. Then $g\in \pi^{-1}(Y_{\gamma})$ if and only if $g$ vanishes on all generators (i), (ii) and (iii)
of $I_{\gamma}$.
\end{Lemma}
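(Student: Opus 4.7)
The plan is to reduce the lemma to a translation between the three flag conditions of Theorem~\ref{thm:wyserbruhat} and the three families of generators (i), (ii), (iii) of $I_\gamma$. For $g\in GL_n$, write $F_\bullet=\pi(g)$ explicitly as the standard flag whose $i$-th piece $F_i$ is the span of the first $i$ columns of $g$. Thus, if we view $g$ as the specialization of the generic matrix $M_n$ obtained by substituting the entries of $g$ into the variables $z_{i,j}$, then the first $j$ columns of $g$ are precisely the coordinates of a basis of $F_j$, and the upper-left $p\times i$ (resp. lower-left $q\times i$) block gives the coordinates of the basis of $F_i$ in the $E_p$ (resp. $E^q$) projection. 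With this dictionary in hand, the argument becomes purely a standard linear algebra identification of rank conditions with minor vanishing.

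First I would handle conditions (1) and (2) of Theorem~\ref{thm:wyserbruhat}. Since $F_i=F_i\cap E_p\oplus \rho'(F_i)$ in the appropriate sense, the rank-nullity theorem gives
\[\dim(F_i\cap E^q)=i-\mathrm{rank}(\text{upper $p\times i$ block of }g),\qquad \dim(F_i\cap E_p)=i-\mathrm{rank}(\text{lower $q\times i$ block of }g).\]
Hence $\dim(F_i\cap E^q)\geq\gamma(i;-)$ is equivalent to the upper $p\times i$ block having rank at most $i-\gamma(i;-)=R_-(\gamma)_i-1$, which in turn holds iff all $R_-(\gamma)_i$-minors of that block vanish at $g$. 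That is exactly the vanishing of the generators (ii) of $I_\gamma$. Likewise, condition (1) translates into the vanishing of the generators (i).

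Next I would handle condition (3). The key point is that the auxiliary matrix $P_{i,j}$ appearing in (iii) is designed precisely so that its columns, evaluated at $g$, span $\rho(F_i)+F_j$: the first $i$ columns of $P_{i,j}$ agree with the first $i$ columns of $g$ with the bottom $q$ entries zeroed, i.e., they span $\rho(F_i)$, and the remaining $j$ columns span $F_j$. Therefore
\[\dim(\rho(F_i)+F_j)=\mathrm{rank}\,P_{i,j}(g),\]
and the condition $\dim(\rho(F_i)+F_j)\le j+\gamma(i;j)$ is equivalent to $\mathrm{rank}\,P_{i,j}(g)<W(\gamma)_{ij}$, i.e., to the vanishing at $g$ of all $W(\gamma)_{ij}$-minors of $P_{i,j}$. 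These are exactly the generators (iii).

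Combining the three equivalences, $g$ vanishes on all generators of $I_\gamma$ if and only if the flag $F_\bullet=\pi(g)$ satisfies the three conditions of Theorem~\ref{thm:wyserbruhat}, hence if and only if $g\in\pi^{-1}(Y_\gamma)$. The only mild subtlety, which I expect to be the single point requiring care, is verifying that the columns of $P_{i,j}(g)$ really do span $\rho(F_i)+F_j$ rather than a larger or smaller subspace; this is immediate from the block description of $P_{i,j}$ but should be spelled out explicitly to make the identification of minors with rank conditions unambiguous.
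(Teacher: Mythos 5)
Your proposal is correct and follows essentially the same route as the paper's proof: identify $F_i$ with the span of the first $i$ columns of $g$, translate conditions (1) and (2) of Theorem~\ref{thm:wyserbruhat} into rank bounds on the lower $q\times i$ and upper $p\times i$ blocks via rank-nullity, and observe that the column space of $P_{i,j}(g)$ is exactly $\rho(F_i)+F_j$, so each condition is equivalent to the vanishing of the corresponding family of minors. The one point you flag as a subtlety (that $P_{i,j}(g)$ spans $\rho(F_i)+F_j$) is handled in the paper exactly as you describe, by reading off the block structure of $P_{i,j}$.
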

\begin{proof}
We make the usual identification of $gB\in GL_n/B$ with the flag 
\[F_{\bullet}:\langle {\vec 0}\rangle\subset F_1\subset F_2\subset \ldots \subset F_{n-1}\subset {\mathbb C}^n,\]
where $F_i$ is spanned by the leftmost $i$ columns of $g$.

Fix $F_{\bullet}\in Y_{\gamma}$. By Theorem~\ref{thm:wyserbruhat}, the conditions (1), (2) and (3) of that theorem
hold. We examine their implications on $g$:

(1) and (2): Consider the map $\phi:F_i\to E^q$ obtained by the projection of ${\vec v}\in F_i\subset {\mathbb C}^n$
onto ${\mathbb C}^n/E_p\cong E^q$. Since $\ker \phi = F_i\cap E_p$, by the rank-nullity theorem, (1) is equivalent to
\[{\rm rank} \ \phi = \dim F_i- \dim \ker \phi \leq i-\gamma(i,+).\]
Equivalently, the $g$ associated to $F_{\bullet}$ vanishes on the minors (i). Similarly, $F_{\bullet}$ satisfies 
(2) if and only if $g$ vanishes on the minors (ii).

(3): $\rho(F_i) + F_j$ is isomorphic to the 
 column space of the $n \times (i+j)$ matrix whose first $i$ columns coincide with the first $i$ columns of $g$, 
but with the lower-left $q \times i$ submatrix zeroed out, and whose next $j$ columns coincide 
with the first $j$ columns of $g$ (unaltered). Thus $g$ vanishes on the generators (iii) if and only if
$F_{\bullet}$ satisfies (3). 
\end{proof}

Since $I_{\gamma}$ vanishes on $\pi^{-1}(Y_{\gamma})$ we must have $M_{\gamma}:=\overline{\pi^{-1}(Y_{\gamma})}\subseteq V(I_{\gamma})$. We would know $M_{\gamma}=V(I_{\gamma})$ (as sets) if the latter
is shown to be irreducible.

Let ${\widetilde I}_{\gamma}$ be generated by the generators (i) and (ii).  We will need to recall the following well-known and easy fact about 
Gr\"{o}bner bases, stated in the specific form we need:

\begin{Lemma}
\label{lemma:easygrobner}
Let $A$ and $B$ be disjoint collections of commuting variables.
Suppose $f_1,\ldots, f_n$ is a Gr\"{o}bner basis
of ${\Bbbk}[A]$ with respect to a pure lexicographic term order $\prec_A$, and that $g_1,\ldots,g_m$ is a Gr\"{o}bner basis of
${\Bbbk}[B]$ with respect to a pure lexicographic term order $\prec_B$. Let $\prec_{A;B}$ be the pure lexicographic term order on
${\Bbbk}[A,B]$ extending $\prec_A$ and $\prec_B$ that favors $A$ over $B$. Then $G=\{f_1,\ldots,f_n, g_1,\ldots g_m\}$ is a Gr\"{o}bner basis with respect to $\prec_{A,B}$. 
\end{Lemma}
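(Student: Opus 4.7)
My plan is to invoke Buchberger's criterion: $G$ is a Gr\"obner basis with respect to $\prec_{A;B}$ if and only if every S-polynomial $S(h_1, h_2)$ with $h_1, h_2 \in G$ reduces to zero modulo $G$ under $\prec_{A;B}$. The first preparatory observation is that, because $\prec_{A;B}$ favors $A$ over $B$, the leading monomial of $f_i$ under $\prec_{A;B}$ coincides with its leading monomial under $\prec_A$, and lives in $\Bbbk[A]$; similarly the leading monomial of $g_j$ under $\prec_{A;B}$ agrees with its leading monomial under $\prec_B$, and lives in $\Bbbk[B]$.

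I would then split the check into three cases. If $h_1, h_2 \in \{f_1, \ldots, f_n\}$, then $S(h_1, h_2) \in \Bbbk[A]$, and since $\{f_i\}$ is already a Gr\"obner basis in $\Bbbk[A]$ with respect to $\prec_A$, the standard division algorithm in $\Bbbk[A]$ expresses $S(h_1, h_2)$ as a combination of the $f_i$'s with remainder $0$; because leading monomials in $\Bbbk[A]$ agree under $\prec_A$ and $\prec_{A;B}$, this same reduction is valid in $(\Bbbk[A,B], \prec_{A;B})$. The case $h_1, h_2 \in \{g_1, \ldots, g_m\}$ is symmetric.

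The remaining case is $h_1 = f_i$, $h_2 = g_j$. Here the leading monomials involve disjoint variable sets (one purely in $A$, the other purely in $B$), hence they are coprime. By Buchberger's first criterion (see, e.g., \cite[Chapter 2, Proposition 4]{Miller.Sturmfels} or any standard reference), an S-polynomial of two generators with relatively prime leading monomials automatically reduces to zero modulo the pair, hence also modulo $G$.

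Since all S-polynomials reduce to zero, Buchberger's criterion finishes the proof. I do not expect any substantive obstacle: the content is the coprime-leading-terms S-polynomial lemma together with the elementary fact that the lex order favoring $A$ over $B$ preserves the leading terms of the $f_i$ and $g_j$.
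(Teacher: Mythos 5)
Your proposal is correct and follows essentially the same route as the paper: both arguments apply Buchberger's criterion, with the key point being that the mixed $S$-polynomials $S(f_i,g_j)$ reduce to zero because the leading terms lie in disjoint variable sets (the paper verifies this directly via the identity $S(f_i,g_j)=-(g_j-{\tt LT}(g_j))f_i+(f_i-{\tt LT}(f_i))g_j$, whereas you cite the standard coprime-leading-terms lemma). Your explicit treatment of the pure-$A$ and pure-$B$ pairs and of the compatibility of leading terms under $\prec_{A;B}$ is left implicit in the paper, but there is no substantive difference.
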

\begin{proof}
Indeed, if $S(f_i,g_j)$ is the $S$-polynomial then
\[S(f_i,g_j):={\tt LT}(g_j)f_i - {\tt LT}(f_i)g_j=-(g_j-{\tt LT}(g_j))f_i+(f_i-{\tt LT}(f_i))g_j.\]
Thus, using the multivariate division algorithm, dividing $S(f_i,g_j)$ by $G$ (listed in the order $f_i,g_j,\ldots$) gives remainder $0$. Now apply Buchberger's criterion \cite[Section~15.4]{Eisenbud}.
\end{proof}
% The proof is clear by example, let f=x^a+x^b and g=y^c+y^d

\begin{Claim}
\label{claim:grobfortilde}
${\widetilde I}_{\gamma}$ is a prime ideal that scheme-theoretically cuts
out $\overline{\pi^{-1}(X_{u(\gamma)}^{v(\gamma)})}$. The generators form a Gr\"{o}bner basis (with squarefree lead terms)
with respect to $\prec_{p,q}$.
\end{Claim}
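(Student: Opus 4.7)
The crucial structural observation is that the generators (i) of $\widetilde{I}_\gamma$ involve only the variables $z_{ij}$ with $i > p$ (the bottom $q$ rows of $M_n$), while the generators (ii) involve only the variables $z_{ij}$ with $i \leq p$ (the top $p$ rows). Thus $\widetilde{I}_\gamma = I^{(+)} + I^{(-)}$ where $I^{(+)} \subset {\mathbb C}[z_{ij} : i > p]$ is generated by the type (i) minors and $I^{(-)} \subset {\mathbb C}[z_{ij} : i \leq p]$ is generated by the type (ii) minors. The two ideals live in disjoint polynomial subrings, so Lemma~\ref{lemma:easygrobner} will let us combine Gröbner bases once we have them separately.

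The next step is to identify $I^{(+)}$ and $I^{(-)}$ with Schubert determinantal ideals of vexillary permutations. By Lemma~\ref{lemma:isvex}, $u(\gamma)$ and $v(\gamma)$ are vexillary, with essential sets confined to columns $q$ and $p$ respectively. Unwinding the definitions of $R_+(\gamma)$ and $R_-(\gamma)$ in terms of the rank jumps of $u(\gamma)$ and $v(\gamma)$, one matches $I^{(-)}$ with the Schubert determinantal ideal $I_{v(\gamma)}$ cutting out $\overline{\pi^{-1}(X_{v(\gamma)})}$ (rank conditions on northwest $p \times i$ submatrices), and $I^{(+)}$ with the Schubert determinantal ideal cutting out $\overline{\pi^{-1}(X^{u(\gamma)})}$ after the evident row-reversal identification that turns southwest rank conditions into northwest ones. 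For vexillary permutations, the results of \cite{KMY} (quoted above as Proposition~\ref{prop:KMY} at the level of formulas) apply: the essential-set minors give a Gröbner basis with squarefree (antidiagonal) lead terms, and by Fulton's work the ideal is prime and scheme-theoretically cuts out the matrix Schubert variety. The term order $\prec_{p,q}$ is by design a product order that restricts to an antidiagonal-type lex on each of the two row-blocks, so the KMY conclusion applies individually to $I^{(+)}$ and $I^{(-)}$.

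Now apply Lemma~\ref{lemma:easygrobner}. The union of the two Gröbner bases is a Gröbner basis for $\widetilde{I}_\gamma$ with respect to $\prec_{p,q}$, and the squarefree lead-term property is inherited from each piece. Primeness is immediate: the coordinate ring ${\mathbb C}[Z]/\widetilde{I}_\gamma \cong \big({\mathbb C}[z_{ij}: i \leq p]/I^{(-)}\big) \otimes_{\mathbb C} \big({\mathbb C}[z_{ij}: i > p]/I^{(+)}\big)$ is a tensor product of two integral domains over an algebraically closed field, hence a domain. For the scheme-theoretic identification, setwise we have $V(\widetilde{I}_\gamma) = V(I^{(+)}) \cap V(I^{(-)}) = \overline{\pi^{-1}(X^{u(\gamma)})} \cap \overline{\pi^{-1}(X_{v(\gamma)})} = \overline{\pi^{-1}(X_{u(\gamma)}^{v(\gamma)})}$ (using that $\pi^{-1}$ commutes with intersection and that Richardson varieties are irreducible and reduced of the expected dimension); combined with primeness of $\widetilde{I}_\gamma$ and irreducibility of the target, this gives scheme-theoretic equality.

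The main obstacle I anticipate is Step 2 — the precise bookkeeping that matches $R_+(\gamma)$, $R_-(\gamma)$ with the rank-condition data of the essential sets of $u(\gamma)$, $v(\gamma)$, including getting the row-reversal right for the opposite Schubert variety side. Once that dictionary is pinned down and one checks that $\prec_{p,q}$ restricts to an antidiagonal-compatible order on each block, the rest is a formal combination of \cite{KMY} and Lemma~\ref{lemma:easygrobner}.
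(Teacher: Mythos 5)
Your proposal follows essentially the same route as the paper: split $\widetilde{I}_\gamma$ into two Schubert determinantal ideals living in the disjoint row blocks (top $p$ rows for $v(\gamma)$, bottom $q$ rows for $u(\gamma)$), invoke the vexillary Gr\"obner basis theorem of \cite{KMY} on each block, and combine via Lemma~\ref{lemma:easygrobner}. The only real divergence is in the last step: the paper deduces primeness from radicality (squarefree Gr\"obner lead terms plus semicontinuity) together with irreducibility of the product of the two matrix Schubert varieties and the Nullstellensatz, whereas you quote Fulton's primeness of each factor and use that a tensor product of domains over $\mathbb{C}$ is a domain --- both work, and your dimension/irreducibility argument for identifying $V(\widetilde{I}_\gamma)$ with $\overline{\pi^{-1}(X_{u(\gamma)}^{v(\gamma)})}$ matches the paper's.
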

\begin{proof}
By definition, ${\widetilde I}_\gamma$ is the ideal sum of a Schubert determinantal ideal associated
to $v(\gamma)$ living in the
first $p$ rows with a Schubert determinantal ideal associated to $u(\gamma)$ living in the bottom $q$ rows. 
The generators for each of these is individually Gr\"{o}bner (with squarefree lead terms) for the term order given \cite[Theorem~3.8]{KMY}. Now the Gr\"{o}bner assertion
holds by Lemma~\ref{lemma:easygrobner}.

Since $\pi^{-1}(X_{u(\gamma)}^{v(\gamma)})$ clearly vanishes on ${\widetilde I}_{\gamma}$ we have 
$\overline{\pi^{-1}(X_{u(\gamma)}^{v(\gamma)})}\subseteq V({\widetilde I}_{\gamma})$ (and both zero sets
are of the same dimension).

Since its generators are squarefree and Gr\"{o}bner, by semicontinuity, ${\widetilde I}_{\gamma}$ is a radical ideal. 
On the other hand, $V({\widetilde I}_{\gamma})$ is clearly irreducible since it is the Cartesian product of two (irreducible)
matrix Schubert varieties. Hence by the Nullstellensatz, ${\widetilde I}_{\gamma}$ is prime and so 
$\overline{\pi^{-1}(X_{u(\gamma)}^{v(\gamma)})}= V({\widetilde I}_{\gamma})$ (scheme-theoretic equality). 
\end{proof}

Now we have 
\[\overline{\pi^{-1}(X_{u(\gamma)}^{v(\gamma)})}= V({\widetilde I}_{\gamma})\supseteq V(I_{\gamma})\supseteq M_{\gamma}.\]
However, by \cite{Wyser12} we know $Y_{\gamma}=X_{u(\gamma)}^{v(\gamma)}$ so $M_{\gamma}=\overline{\pi^{-1}(X_{u(\gamma)}^{v(\gamma)})}$
and hence $V({\widetilde I}_{\gamma})=V(I_{\gamma})$.
Furthermore, by the Nullstellensatz, $I_\gamma\subseteq {\widetilde I}_{\gamma}(=\sqrt {\widetilde I}_{\gamma})$. However, by definition $I_{\gamma}\supseteq 
{\widetilde I}_{\gamma}$ and hence $I_{\gamma}={\widetilde I}_{\gamma}$. Thus (I) now follows by Claim~\ref{claim:grobfortilde}
since the additional generators (with squarefree lead terms) that are in $I_{\gamma}$ but not ${\widetilde I}_{\gamma}$ do not affect
Gr\"{o}bnerness of the latter's generators, for general reasons.

Since $M_{\gamma}\subseteq V(I_\gamma)$ and now $I_{\gamma}={\widetilde I}_{\gamma}$ is prime, we must have $M_{\gamma}=V(I_{\gamma})$ and the first sentence of the theorem holds.

In view of the equality $I_{\gamma}={\widetilde I}_{\gamma}$, (II) is now easy from \cite[Section~4]{KMY} since the latter
is the ideal sum of two vexillary Schubert determinantal ideals. (Specifically note that our grading of $z_{ij}$ is transpose to the
convention used in \emph{loc. cit.})

Given (II), (III) follows from Proposition~\ref{prop:KMY} and the conclusion of our 
proof of the main theorems of Section~2. Alternatively, by the same line of reasoning as
\cite[Corollary~2.3.1]{Knutson.Miller:annals},  ${\rm multidegree}_{{\mathbb Z}^{2n}}({\mathbb C}[Z]/I_{\gamma})$ represents $[Y_{\gamma}]_T$. However, \emph{a priori} this representative is not the same as $\Upsilon_{\gamma}(X;Y)$.
That these are in fact equal follows from (II), Proposition~\ref{prop:Snonly} and Proposition~\ref{prop:tworeps}. The authors
of \emph{loc. cit.} in fact explain how their argument works in ordinary $K$-theory; cf. \cite[Remarks 2.3.3 and 2.3.4]{Knutson.Miller:annals}.
\qed

\subsection{Conjectures}\label{sec:conjectures}
Some of the assertions of Theorem~\ref{claim:main} seem to hold generally.

\begin{Conjecture}
\label{conj:A}
The generators of $I_{\gamma}$ are a Gr\"{o}bner basis with respect to some lexicographic ordering. In particular, $I_{\gamma}$ is a radical ideal. 
\end{Conjecture}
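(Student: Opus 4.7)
The plan is to attempt an extension of the non-crossing case (Theorem~\ref{claim:main}). Recall that in the non-crossing case the key was the collapse $I_\gamma = \widetilde I_\gamma$, reducing the problem to a sum of two Schubert determinantal ideals living in disjoint sets of variables, where one could invoke the KMY Gröbner theorem and Lemma~\ref{lemma:easygrobner}. For general $\gamma$ this collapse fails: the mixed generators of type (iii), coming from the matrices $P_{i,j}$ with a ``shifted duplicate'' column block, are no longer consequences of (i) and (ii). So a new input is needed.

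My first step would be to keep $\prec_{p,q}$ (or a close refinement tailored to the arcs of $\gamma$) as the candidate term order and compute $\text{init}_{\prec_{p,q}}(I_\gamma)$ on small crossing examples (e.g.\ $\gamma \in {\tt Clan}_{2,2}$ with one crossing pair). One hopes to verify (a) squarefreeness of the lead terms of each listed generator, and (b) that the number of minimal primes of the initial ideal matches the number of monomials predicted by the conjectural extension of~(III), giving both combinatorial intuition for the right pipe-diagram model and confidence in the term order.

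With the term order fixed, I would try two parallel attacks. The \emph{algebraic} attack is direct Buchberger: classify $S$-pairs among generators by their types $\{(i),(ii),(iii)\}$. The $S$-pairs $((i),(i))$, $((ii),(ii))$, and $((i),(ii))$ are handled by Claim~\ref{claim:grobfortilde} and Lemma~\ref{lemma:easygrobner}. What remains are $S$-pairs involving type (iii): $((i)$--$(iii))$, $((ii)$--$(iii))$, and $((iii)$--$(iii))$. Here the duplicated columns in $P_{i,j}$ and the overlap structure between $P_{i,j}$ and $P_{i',j'}$ (for different $(i,j)$) make the reductions delicate. I would exploit the Plücker-style identities among minors of matrices sharing column blocks to show each $S$-polynomial reduces to zero modulo the full generating set. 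The \emph{geometric} attack would instead construct a $\mathbb{G}_m$-equivariant flat degeneration of $M_\gamma$, adapted to the term order, whose special fiber is a reduced union of coordinate subspaces indexed by a pipe-diagram model generalizing that of Theorem~\ref{claim:main}(II); reducedness of the special fiber would then imply that the given generators are Gröbner and that $I_\gamma$ itself is radical (via semicontinuity, as in the proof of Claim~\ref{claim:grobfortilde}).

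The main obstacle is the absence of a Richardson-style geometric presentation of $Y_\gamma$ when $\gamma$ has crossings. In the non-crossing case, $Y_\gamma = X_{v(\gamma)} \cap X^{u(\gamma)}$ supplied both the ideal collapse $I_\gamma = \widetilde I_\gamma$ and the irreducibility needed to invoke the Nullstellensatz. No such clean decomposition is available for crossing $\gamma$, and indeed the very point of the generators (iii) is to encode the refined rank jumps forced by arcs. I therefore expect that a successful proof will either require a new ``clan matrix Schubert variety'' framework that systematically packages the (i)+(ii)+(iii) data and admits its own Gröbner theorem (in the spirit of~\cite{KMY}), or a clever degeneration argument that resolves crossings one at a time, tracking how $I_\gamma$ and its Gröbner basis transform along a chain in the closure order. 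The second route would most naturally be organized by induction on the number of crossings of $\gamma$, with the non-crossing case of Theorem~\ref{claim:main} as the base.
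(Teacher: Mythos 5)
The statement you are addressing is a \emph{conjecture} in the paper, not a theorem: the authors offer no proof of Conjecture~\ref{conj:A}, only exhaustive computational verification for $p+q\leq 6$ together with the explicit warning that the term order must in general depend on $\gamma$ --- they exhibit crossing clans at $(p,q)=(3,3)$ for which $\prec_{p,q}$ fails to make the defining generators (or even a modification obtained by column operations on the $P_{i,j}$) a Gr\"{o}bner basis. Your proposal should therefore be judged as an attempt to settle an open problem, and as such it contains a genuine gap: it is a research plan, not a proof. No term order is actually exhibited for any crossing clan, no $S$-pair involving the type (iii) generators is actually reduced to zero, and no flat degeneration with reduced special fiber is actually constructed; each of the two ``parallel attacks'' is described at the level of intent (``I would exploit Pl\"{u}cker-style identities\ldots'', ``would instead construct a $\mathbb{G}_m$-equivariant flat degeneration\ldots'') without carrying out the step that constitutes the mathematical content. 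In particular, your opening move of keeping $\prec_{p,q}$ (or a mild refinement) as the candidate order runs directly into the obstruction the paper records: already at $(3,3)$ there are crossing clans where $\prec_{p,q}$ does not work, so any successful argument must produce a clan-dependent order together with a mechanism for proving Gr\"{o}bnerness that does not reduce to the non-crossing collapse $I_\gamma=\widetilde I_\gamma$.

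On the positive side, your diagnosis of the difficulty is accurate and matches the paper's own framing: the collapse to a sum of two Schubert determinantal ideals (Claim~\ref{claim:grobfortilde} plus Lemma~\ref{lemma:easygrobner}) is exactly what is lost when $\gamma$ has crossings, because $Y_\gamma$ is no longer a Richardson variety and the type (iii) generators carry genuinely new rank conditions. Your suggestion of a ``clan matrix Schubert variety'' framework in the spirit of \cite{KMY}, or a degeneration resolving crossings one at a time along the closure order, is a reasonable direction --- but until one of these is executed (a concrete term order for each $\gamma$, verified $S$-pair reductions or a reduced degenerate fiber, and the resulting radicality), the conjecture remains exactly as open as the paper leaves it.
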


We emphasize that the term order needed generally depends on $\gamma$.  Conjecture \ref{conj:A} has been verified exhaustively for $p+q\leq 6$ as well as in enough cases for larger $p+q$ for us to be convinced. %testingYong2

\begin{Example}
When $(p,q)=(1,2),(2,1)$, all $\gamma$ are non-crossing.
When $(p,q)=(2,2), (3,2)$, $\prec_{p,q}$ succeeds in making the
defining generators of $I_{\gamma}$ Gr\"{o}bner. 
This term order also succeeds for $(p,q)=(2,3)$ if one add some generators obtained by column operations on the $P_{i,j}$ matrices. 
The first interesting examples seem to be at $(p,q)=(3,3)$ where

\[
\begin{picture}(280,15)
\put(0,0){\epsfig{file=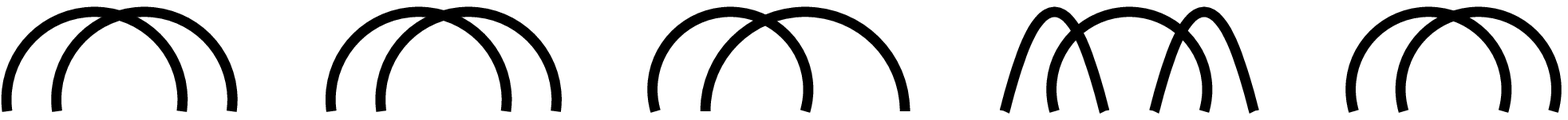, height=.8cm}}
\put(13,0){$-$}
\put(23,0){$+$}
\put(49,0){$,$}

\put(74,0){$+$}
\put(84,0){$-$}
\put(110,0){$,$}

\put(176,0){$,$}
\put(242,0){$,$}

\put(139,0){$+$}
\put(157,0){$-$}
\put(272,0){$+$}
\put(300,0){$-$}
\end{picture}
\]
are the instances where the defining generators (or the modification
alluded to above) are not Gr\"{o}bner with respect to $\prec_{p,q}$.\qed
\end{Example}

%Actually, we have never seen a Gr\"{o}bner limit of $I_{\gamma}$ %that is not reduced. 

\begin{Conjecture}
\label{conj:B}
Theorem~\ref{claim:main}(III) holds for all $\gamma$. 
\end{Conjecture}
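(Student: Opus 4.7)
The strategy is to emulate the conclusion of the non-crossing proof of Theorem~\ref{claim:main}(III) by establishing (a) that $I_\gamma$ cuts out $M_\gamma=\overline{\pi^{-1}(Y_\gamma)}$ scheme-theoretically, and (b) that the multidegree of $\mathbb{C}[Z]/I_\gamma$ expands as a $\mathbb{Z}[Y]$-linear combination of double Schubert polynomials indexed by $S_n$. Proposition~\ref{prop:tworeps} (and its $K$-theoretic analogue) then forces equality with $\Upsilon_\gamma(X;Y)$ and $\Upsilon^K_\gamma(X;Y)$ respectively.

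For (a), the set-theoretic equality $V(I_\gamma)=M_\gamma$ follows from Theorem~\ref{thm:wyserbruhat} together with the proof of Lemma~\ref{lemma:vanish}, which nowhere invokes the non-crossing hypothesis. Upgrading to scheme-theoretic equality amounts to showing $I_\gamma$ is radical: since $M_\gamma$ is irreducible, a radical ideal with zero locus $M_\gamma$ is automatically the prime ideal $I(M_\gamma)$. This is essentially the content of Conjecture~\ref{conj:A}: a Gr\"obner basis with squarefree leading terms (under some lexicographic order depending on $\gamma$) yields radicality by semicontinuity, exactly as in the proof of Claim~\ref{claim:grobfortilde}. Granting this, standard multidegree theory (cf.~\cite[Section~1.2]{Knutson.Miller:annals}) shows that $\mathrm{multidegree}_{\mathbb{Z}^{2n}}(\mathbb{C}[Z]/I_\gamma)$ represents $[Y_\gamma]_T$ under the Borel isomorphism.

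For (b), the generators of $I_\gamma$ involve only variables $z_{i,j}$ with $1\le i,j\le n$, so every prime component of any initial ideal is generated by a subset of these $z_{i,j}$ and contributes to the multidegree a product of $\binom{n}{2}-\ell(\gamma)$ factors of the form $x_j-y_i$ with $i,j\in[1,n]$. To invoke Proposition~\ref{prop:tworeps} one needs each monomial $\mathbf{x}^\kappa$ appearing in the expansion to satisfy $\kappa\le(n-1,n-2,\ldots,1,0)$ componentwise, i.e., to extend Claim~\ref{claim:staircase} to arbitrary $\gamma$. The essential ingredient is a bound on how many $+$'s in a pipe-diagram-like configuration can appear in a single row; this should follow from combining the rank conditions (i), (ii), (iii) defining $I_\gamma$ with the arc structure of $\gamma$, in the same spirit as the inductive argument of Claim~\ref{claim:staircase}.

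\textbf{Main obstacle.} The principal difficulty is Conjecture~\ref{conj:A}. In the non-crossing case, the generators of type~(iii) are redundant modulo those of types~(i) and~(ii), reducing $I_\gamma$ to the sum of two vexillary Schubert determinantal ideals handled by Lemma~\ref{lemma:easygrobner} and \cite[Theorem~3.8]{KMY}. For general $\gamma$ the (iii)-generators are genuinely needed, and the required term order depends intricately on the arc structure: as the authors note, even the modification of the generators by column operations fails to give a Gr\"obner basis under $\prec_{p,q}$ for certain $(p,q)=(3,3)$ clans. Constructing the correct $\gamma$-dependent lexicographic term order uniformly, or sidestepping Gr\"obner methods altogether by exhibiting an explicit free resolution or by proving that $\mathbb{C}[Z]/I_\gamma$ is Cohen--Macaulay of the expected codimension $pq-\ell(\gamma)$, is where I would expect the approach to get stuck.
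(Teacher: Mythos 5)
This statement is left open in the paper: Conjecture~\ref{conj:B} is precisely that, a conjecture, supported only by exhaustive computer verification through $p+q=7$, so there is no proof of record to compare against. Your proposal is, as you yourself say, a conditional strategy rather than a proof: it reduces Conjecture~\ref{conj:B} to Conjecture~\ref{conj:A} (open) plus an unproven extension of Claim~\ref{claim:staircase} to crossing clans, so it does not settle the statement.

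Beyond that honest conditionality, there is a concrete gap in your step (a). Lemma~\ref{lemma:vanish} only identifies $V(I_\gamma)\cap GL_n$ with $\pi^{-1}(Y_\gamma)$; it says nothing about points of $V(I_\gamma)$ lying in the complement of $GL_n$, so it does \emph{not} give the set-theoretic equality $V(I_\gamma)=M_\gamma$ for general $\gamma$. A priori $V(I_\gamma)$ could contain extra irreducible components supported entirely in the locus of singular matrices; such components of dimension at least $\dim M_\gamma$ would corrupt the multidegree, and extra components of any dimension would corrupt the $K$-polynomial, so even granting radicality of $I_\gamma$ you would only obtain that $I_\gamma$ is an intersection of the prime of $M_\gamma$ with possible spurious primes. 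In the paper's non-crossing argument this is excluded not by radicality but by showing $V(I_\gamma)=V(\widetilde I_\gamma)$ is irreducible --- a product of two matrix Schubert varieties --- which rests on the Richardson description $Y_\gamma=X_{u(\gamma)}^{v(\gamma)}$ from \cite{Wyser12}; no analogue is available for crossing $\gamma$, and the authors explicitly stop short of conjecturing that $I_\gamma$ is prime. So your plan needs, in addition to Conjecture~\ref{conj:A}, an argument excluding components outside $\overline{\pi^{-1}(Y_\gamma)}$ (e.g.\ primality, or an unmixedness/dimension count), and the staircase bound for arbitrary $\gamma$ must actually be proved before Proposition~\ref{prop:tworeps} can be invoked.
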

Equivalently, Conjecture~\ref{conj:B} claims that ${\rm multidegree}_{{\mathbb Z}^{2n}}({\mathbb C}[Z]/I_{\gamma})$ 
and ${\mathcal K}_{{\mathbb Z}^{2n}}({\mathbb C}[Z]/I_{\gamma})$ satisfy the divided difference and isobaric
divided difference recurrences.  This has been verified by exhaustive computer checks through $p+q=7$.  
%We have also verified additional cases for $p+q=8$.
%These assertions might not be too hard; cf. \cite[Section~3.2]{Knutson.Miller:annals}.
%
%Conjecture~\ref{conj:B} does not immediately imply the radicalness claim of Conjecture~\ref{conj:A}. Since
%$M_{\gamma}\subset V(I_{\gamma})$ one also needs to know
%that the $K$-polynomial of $M_{\gamma}$ satisfies the isobaric divided difference recurrence.

We are not yet confident to assert that $I_\gamma$ is prime, although further discussion may appear elsewhere.

These algebraic problems are closely related to two 
combinatorial questions:

\begin{Question}
\label{question:C}
Give a manifestly nonnegative combinatorial rule for the expansion of $\Upsilon^{(\beta)}_{\gamma}(X;Y)$ into monomials in
$x_i-y_j+\beta x_i y_j$.
\end{Question}

\begin{Question}
\label{question:D}
Give a manifestly nonnegative combinatorial rule for the expansion of $\Upsilon^{(\beta)}_{\gamma}(X;Y)$ into 
${\mathfrak S}^{(\beta)}_{\gamma}(X;Y)$.
\end{Question}
%Ideally, there is a collection ${\mathcal A}_{\gamma}$ of %assignments of $+$'s to the $n\times n$ grid such that
%\[\Upsilon^{\beta}_{\gamma}(X;Y)=\sum_{{\mathcal P}\in %{\mathcal A}_{\gamma}} {\tt wt}^{(\beta)}({\mathcal P}).\]

Brion's formula \cite{Brion} states:
\[[Y_{\gamma}]=\sum_{w\in S_n} c_{\gamma,w}[X_w]\in H^{\star}(GL_n/B),\]
for explicit, combinatorially defined coefficients $c_{\gamma,w}\in\{0,1\}$. 
In view of Proposition~\ref{prop:Snonly}, this formula implies a solution to Question~\ref{question:D} when $\beta=0$ and each $y_i=0$,  by
using any monomial expansion formula (e.g., \cite{Fomin.Kirillov, BB}) for ${\mathfrak S}_{w}(X)$.

A result of A.~Knutson \cite[Theorem~3]{Knutson:multfree} shows how to obtain the $K$-theoretic expansion of a multiplicity-free subvariety (such as $Y_{\gamma}$) given the cohomology expansion.
This provides answers to Questions~1 and~2 in ordinary $K$-theory.

However, we are not aware of any formula (in ordinary cohomology or $K$-theory) that is geometrically natural from the perspective of 
Gr\"{o}bner degenerations of $I_{\gamma}$.

Question~\ref{question:D} in the case of
$\Upsilon_{\gamma}(X;Y)$ for matchless $\gamma$ is equivalent to
certain (yet unsolved) Schubert calculus problems. Once the matchless
case is solved, a formula for the general case can be obtained by applying the operators $\partial_i$.
These expansions involve coefficients in ${\mathbb Z}_{\geq 0}[y_{2}-y_1,\ldots,y_n-y_{n-1}]$.

\excise{\begin{Example}
Use this example to illustrate that the $K$-polynomials satisfy the isobarics.

\noindent\emph{Checking equalities with Macaulay 2:} (Eventually this will turn into an example.) 
Impose ${\tt deg}(z_{ij})=1$. Then the Hilbert series of the ideal,
as computed by Macaulay (and with numerator $(1-t)^{n^2}$) is a polynomial $p(t)$. The substitution $p(1-t)$ should give the
product of  expected Grothendieck polynomials (where $x_i=1$). For example, when $\gamma=+-+-$ then the Hilbert series is
$(t-1)^4(t+1)^2/(1-t)^{16}$. Then $p(t)=(t-1)^4 (t+1)^2$. Then $p(1-t)=t^4(2-t)^2$. This predicts that there are $4$ degree $4$
"+" diagrams, $4$ degree $5$ diagrams and $1$ degree $6$ diagrams. This is true.
\end{Example}}

\section{Singularities of the orbit closures}

We use a modification of $I_{\gamma}$ to compute 
measures of the singularities of $p\in Y_{\gamma}$.

\subsection{Representative points}

We pick representative points of each ${\mathcal O}_{\gamma}$ to work with. Call a permutation
$\sigma$ {\bf $\gamma$-shuffled} if it is an assignment of 
\begin{itemize}
\item $1,2,\ldots,p$ (in any order) to the vertices of $\gamma$ that
have a ``$+$'' or are the left end of an arc; and
\item $p+1,p+2,\ldots,n$ (in any order) to the remaining positions.
\end{itemize}
Now let $F_{\bullet}^{\gamma,\sigma}=\langle {\vec v}_1,\ldots, {\vec v}_n\rangle$ be the flag given by
\[ {\vec v}_i =
\begin{cases}
	e_{\sigma(i)} & \text{ if vertex $i$ is a sign or the right end of an arc} \\
	e_{\sigma(i)} + e_{\sigma(j)} & \text{ if $i$ and $j$ form an arc and $i<j$}.
\end{cases}
\]
We recall the following easy facts for convenience:
\begin{Lemma}
Let $\gamma\in {\tt Clan}_{p,q}$ be given.
\begin{itemize}
\item[(I)] $F_{\bullet}^{\gamma,\sigma} \in {\mathcal O}_{\gamma}$ for any $\gamma$-shuffled $\sigma$.
\item[(II)] $v(\gamma)$ is $\gamma$-shuffled.
\item[(III)] If $\gamma$ is matchless then the $T$-fixed points ${\mathcal O}_{\gamma}^T=\{F_{\bullet}^{\gamma,\sigma}|\mbox{$\sigma$ is $\gamma$-shuffled}\}$.
\item[(IV)] If $\gamma$ is not matchless then ${\mathcal O}_{\gamma}$ contains no $T$-fixed points.
\item[(V)] Every point of $Y_{\gamma}$ is locally isomorphic to some $F_{\bullet}^{\beta,\sigma}$ for some $\beta\prec\gamma$ and $\beta$-shuffled $\sigma$.
\item[(VI)] Let ${\mathcal P}$ be any upper-semicontinuous property of points of $Y_{\gamma}$. Then $Y_{\gamma}$
globally has property ${\mathcal P}$ if and only if some $T$-fixed point $F_{\bullet}^{\tau,\sigma}$ has property ${\mathcal P}$ for every matchless $\tau \prec \gamma$.
\end{itemize}
\end{Lemma}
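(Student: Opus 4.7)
The plan is to prove (I)--(VI) in order, with (I) being the technical core---an explicit dimension count---while (II)--(VI) follow as standard consequences once (I) is in hand.

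For (I), the key is to verify that $F_\bullet^{\gamma,\sigma}$ satisfies the three rank conditions of Theorem~\ref{thm:wyserbruhat} with equality, placing the flag in $\mathcal O_\gamma$ itself rather than in a strictly smaller orbit of $Y_\gamma$. The spanning vectors $\vec v_k$ are coordinate vectors $e_{\sigma(k)}$ except at left endpoints of arcs, where $\vec v_k = e_{\sigma(k)} + e_{\sigma(j)}$ with $j$ the matching right end. A basis for $F_i \cap E_p$ is then given by $e_{\sigma(k)}$ for $k \leq i$ with $k$ either a $+$ or the left end of an arc wholly contained in $[1,i]$: an arc whose right end exceeds $i$ contributes the mixed vector $e_{\sigma(k)}+e_{\sigma(j)}$ to $F_i$ but no pure $E_p$-vector, since $e_{\sigma(j)}$ is unavailable. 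The count matches $\gamma(i;+)$ exactly. A symmetric analysis handles $F_i \cap E^q$, giving $\gamma(i;-)$. For the third condition, $\rho(F_i)+F_j$ picks up $j$ dimensions from $F_j$ and one extra dimension for each left end in $[1,i]$ whose match exceeds $j$---the projection $\rho$ kills the $E^q$-component of $\vec v_k$, creating an independent $E_p$-vector not in $F_j$---totalling $j + \gamma(i;j)$.

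Parts (II)--(IV) are then quick. (II) is an immediate check against the definitions of Section~\ref{sec:defs}. For (III), when $\gamma$ is matchless there are no arcs, so $F_\bullet^{\gamma,\sigma} = \langle e_{\sigma(1)}, \ldots, e_{\sigma(n)}\rangle$ is a coordinate flag and hence $T$-fixed; conversely every $T$-fixed flag has the coordinate form $\langle e_{w(1)},\ldots,e_{w(n)}\rangle$ for some $w \in S_n$, and the rank conditions of Theorem~\ref{thm:wyserbruhat} show it lies in $\mathcal O_\gamma$ exactly when $w$ is $\gamma$-shuffled. Part (IV) is then immediate: every $T$-fixed point is a coordinate flag, and coordinate flags automatically satisfy the rank conditions of the (matchless) clan determined by the signs of their labels, so no non-matchless orbit contains one.

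Part (V) follows from the orbit stratification $Y_\gamma = \coprod_{\beta \preceq \gamma} \mathcal O_\beta$ together with $K$-transitivity on each $\mathcal O_\beta$: any $p \in \mathcal O_\beta$ is $K$-translated to $F_\bullet^{\beta,\sigma}$ (which lies in $\mathcal O_\beta$ by (I)), giving a $K$-equivariant, hence \'etale-local, isomorphism. Part (VI) combines $K$-invariance of $\mathcal P$ with Borel's fixed point theorem: the locus on $Y_\gamma$ where $\mathcal P$ fails is a closed $K$-stable subvariety, so a union of orbit closures $Y_\beta$; if non-empty it must contain a closed orbit $\mathcal O_\tau$ with $\tau$ matchless, whose $T$-fixed points are exactly the $F_\bullet^{\tau,\sigma}$ by (III) and (IV). The $K$-equivalence of all $F_\bullet^{\tau,\sigma}$ for fixed $\tau$ then collapses the $\sigma$-quantifier to a single representative per $\tau$. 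The main obstacle is the careful bookkeeping in (I), particularly verifying equality (and not just inequality) in the third rank condition, which requires tracking exactly how left endpoints of arcs straddling the positions $i$ and $j$ contribute under $\rho$ to $\rho(F_i)+F_j$.
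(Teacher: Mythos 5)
Your argument is correct and, in outline, matches the paper's: (II), (V) and (VI) are handled exactly as in the paper ($GL_p\times GL_q$-translations give the local isomorphisms, and the failure locus of an upper-semicontinuous, $K$-stable property is a closed $K$-stable set, hence a union of orbit closures containing a closed orbit indexed by a matchless clan). Two points of comparison are worth recording. First, in (I) you anchor the dimension count to Theorem~\ref{thm:wyserbruhat}, which characterizes the \emph{closure} $Y_{\gamma}$ by inequalities; equality in those inequalities does not by itself rule out membership in a strictly smaller orbit. What you actually need (and what the paper invokes) is the orbit-level statement of Matsuki--Oshima \cite{Matsuki-Oshima} and Yamamoto \cite{Yamamoto} that ${\mathcal O}_{\gamma}$ is \emph{precisely} the set of flags satisfying the three conditions with equality; with that citation your explicit computation of $\dim(F_i\cap E_p)$, $\dim(F_i\cap E^q)$ and $\dim(\rho(F_i)+F_j)$ for $F_{\bullet}^{\gamma,\sigma}$ is a complete (and more detailed) version of the paper's ``follows easily.'' Second, for (III) and (IV) you argue directly: every $T$-fixed flag is a coordinate flag, and the rank conditions show a coordinate flag lies in the closed orbit of the matchless clan read off from its labels, so the $T$-fixed points of ${\mathcal O}_{\tau}$ are exactly the $F_{\bullet}^{\tau,\sigma}$ and no non-matchless orbit contains any. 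The paper instead counts: the $\gamma$-shuffled permutations form a coset of size $p!q!$, which equals the number of $T$-fixed points in a closed orbit (each being a flag variety for $K$), and $\binom{n}{p}\cdot p!q!=n!$ accounts for all $T$-fixed points. Your direct membership check is a bit more self-contained, while the paper's count leans on the identification of closed orbits with $K$-flag varieties; both are valid.
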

\begin{proof}
(I) follows easily from a theorem of T.~Matsuki-T.~Oshima \cite{Matsuki-Oshima}
and A.~Yamamoto \cite{Yamamoto} that $\mathcal{O}_{\gamma}$ is precisely the set of flags $F_{\bullet}$ such that
\begin{itemize}
\item ${\rm dim}(F_i\cap E_p)= \gamma(i;+)$;
\item ${\rm dim}(F_i\cap E^q)= \gamma(i;-)$;
\item ${\rm dim}(\pi(F_i)+F_j)= j+\gamma(i;j)$.
\end{itemize}
(II) is immediate from the definitions.
For (III) clearly the ``$\supset$'' inclusion is obvious.  On the other hand, the set of $\gamma$-shuffled permutations is clearly a left coset in 
$S_p\times S_q \backslash S_n$, and so has order $|S_p\times S_q| = p!q!$.  This is precisely the number of $T$-fixed points contained in any closed $K$-orbit, as each is isomorphic to the flag variety for the group $K$.  Thus the inclusion is an equality.  For (IV), simply note that there are $\binom{n}{p}$ closed orbits, each containing $p!q!$ $T$-fixed points (as just noted), for a total of $\binom{n}{p} \cdot p!q! = n!$ $T$-fixed points contained in the closed orbits.  This means that no non-closed orbit can contain a $T$-fixed point.  (Alternatively, (IV) follows directly from \cite[Corollary 6.6]{Springer}.)  For (V),
the elements of $GL_p\times GL_q$ provide the isomorphisms. Finally, for (VI), the matchless clans are the minimal elements
of the closure order.
\end{proof}

Part (IV) contrasts with Schubert varieties where every point is locally isomorphic to a $T$-fixed point. 
However, in view of (VI) these points of $Y_{\gamma}$ are still of special interest.

\subsection{The patch ideal}
Given a permutation $\sigma$, let $M_{n,\sigma}$ be the specialization of the generic matrix $M_n$ obtained by setting $z_{ij}=1$
if $i=\sigma(j)$ and $z_{ij}=0$ if $j>\sigma^{-1}(i)$. For example, if $\sigma=1324$ then (now writing the permutation matrix for $\pi$
with a $1$ in position $(\pi(i),i)$):
\[
M_{4,1324}=
\begin{pmatrix}
		1 & 0 & 0 & 0 \\
		z_{2,1} & z_{2,2} & 1 & 0 \\
		z_{31} & 1 & 0 & 0 \\
		z_{4,1} & z_{4,2} & z_{4,3} & 1
	\end{pmatrix}.
\]
For a clan $\beta$, let $v=v(\beta)$ and
let $L_{\beta}$ be the lower triangular unipotent matrix defined by having
$1$'s in positions $(v(j),v(i))$ whenever $i<j$ is matched in $\beta$. Now define $M_{n,\beta}=L_{\beta}M_{n,v(\beta)}$. So
if for example $\beta=
\begin{picture}(30,10)
\put(0,0){\epsfig{file=matchingsSection4.eps, height=.4cm}}
\put(11,1){$+-$}
\end{picture}$
then $v(\beta)=1324$,
\[
L_{\beta}=
\begin{pmatrix}
		1 & 0 & 0 & 0 \\
		0 & 1 & 0 & 0 \\
		1 & 0 & 1 & 0 \\
		0 & 0 & 0 & 1
	\end{pmatrix} 
 \mbox{ \ \ \ and  \ \ \ $M_{n,\beta}=
\begin{pmatrix}
		1 & 0 & 0 & 0 \\
		z_{2,1} & z_{2,2} & 1 & 0 \\
		z_{31}+1 & 1 & 0 & 0 \\
		z_{4,1} & z_{4,2} & z_{4,3} & 1
	\end{pmatrix}$.}
\]
Finally, define the {\bf patch ideal} $I_{\gamma,\beta}$ of $Y_{\gamma}$ at $\beta$ to be generated
by the same polynomials as the $K$-orbit determinantal ideal except that $M_n$ is replaced by $M_{n,\beta}$
in the definition.
\begin{Example}
Suppose $\gamma=\begin{picture}(27,10)
\put(0,0){\epsfig{file=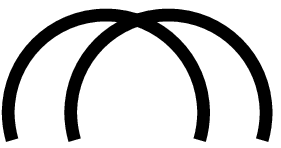, height=.4cm}}
\end{picture}$
and we continue with 
$\beta=
\begin{picture}(30,10)
\put(0,0){\epsfig{file=matchingsSection4.eps, height=.4cm}}
\put(11,1){$+-$}
\end{picture}$. Then the reader can check that $I_{\gamma,\beta}$ is generated by the determinant of 
\[
\begin{pmatrix}
		1 & 1 & 0 & 0 \\
		z_{2,1} & z_{2,1} & z_{2,2} & 1 \\
		0 & z_{31} & 1 & 0 \\
		0 & z_{4,1} & z_{4,2} & z_{4,3}
	\end{pmatrix}.
\]
\qed
\end{Example}

The following is standard; see the discussion of \cite{Insko.Yong}:
\begin{Proposition}
\label{prop:patch}
${\rm Spec}({\rm Fun}(M_{n,\beta})/I_{\gamma,\beta})$ is set-theoretically equal to a local neighbourhood of 
$Y_{\gamma}$ around the point $F_{\bullet}^{\beta,v(\beta)}$. (The point ${\bf 0}$ corresponds to $F_{\bullet}^{\beta,v(\beta)}$.)
\end{Proposition}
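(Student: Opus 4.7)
The plan is to identify $\mathrm{Spec}(\mathrm{Fun}(M_{n,\beta})/I_{\gamma,\beta})$ with the intersection of $Y_{\gamma}$ and a standard affine open chart on $GL_n/B$ around the flag $F_{\bullet}^{\beta,v(\beta)}$. The whole argument is essentially the combination of a chart construction with Lemma \ref{lemma:vanish}.

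First, I would exhibit such a chart. The matrix $M_{n,v(\beta)}$ is the familiar ``Rothe-type'' parameterization of an affine open neighborhood of the $T$-fixed flag $v(\beta)B$ in $GL_n/B$, as used in \cite{KMY}: the free entries $z_{ij}$ (those strictly to the left of the pivot $1$ in each row) give independent affine coordinates, and $z\mapsto M_{n,v(\beta)}(z)B$ is an open immersion. Since $L_{\beta}$ is a fixed invertible (lower triangular unipotent) matrix, left-multiplication by $L_{\beta}$ is an automorphism of $GL_n/B$. Hence $z\mapsto M_{n,\beta}(z)B = L_{\beta} M_{n,v(\beta)}(z)B$ is again an open immersion of affine space into $GL_n/B$, and at $z=0$ its value is $L_{\beta}\cdot v(\beta)\cdot B$, which is the flag $F_{\bullet}^{\beta,v(\beta)}$. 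In particular, the image is an open neighborhood of $F_{\bullet}^{\beta,v(\beta)}$.

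Next, I would apply Lemma \ref{lemma:vanish}: for any $g\in GL_n$, the coset $gB$ lies in $Y_{\gamma}$ if and only if $g$ (regarded as a matrix of numerical entries) vanishes on every generator (i), (ii), (iii) of $I_{\gamma}$. Substituting $g = M_{n,\beta}(z)$ converts each such generator into a polynomial in the coordinates $z_{ij}$, and these substituted polynomials are by construction exactly the generators of $I_{\gamma,\beta}$. Therefore the subset of $z$'s for which $M_{n,\beta}(z)B\in Y_{\gamma}$ coincides with $V(I_{\gamma,\beta})$. Combining this with the first step yields the claimed set-theoretic identification, under which $0\in\mathrm{Spec}(\mathrm{Fun}(M_{n,\beta})/I_{\gamma,\beta})$ corresponds to $F_{\bullet}^{\beta,v(\beta)}$.

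The only place any real care is needed is the verification that $z\mapsto M_{n,\beta}(z)B$ is an open immersion onto a neighborhood of $F_{\bullet}^{\beta,v(\beta)}$; this is standard (it is an invertible translate of a big-cell coordinate chart) and proceeds exactly as in the treatment of Schubert patch ideals in \cite{Insko.Yong}. With that in hand, the proposition follows immediately from Lemma \ref{lemma:vanish} and the definition of $I_{\gamma,\beta}$.
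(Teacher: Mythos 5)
Your proposal is correct and follows essentially the same route as the paper: identify $\mathrm{Spec}(\mathrm{Fun}(M_{n,\beta})/I_{\gamma,\beta})$ with the intersection of $Y_{\gamma}$ with the affine chart $L_{\beta}v(\beta)B_{-}B/B$ (coordinatized by $M_{n,\beta}$, with $z=0$ giving $F_{\bullet}^{\beta,v(\beta)}$), and then invoke Lemma \ref{lemma:vanish} (equivalently Theorem \ref{thm:wyserbruhat}) to translate membership in $Y_{\gamma}$ into vanishing of the generators, which after substituting $M_{n,\beta}$ are exactly the generators of $I_{\gamma,\beta}$. No substantive differences from the paper's argument.
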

\begin{proof}
Let $g=L_{\beta}v(\beta)$. Then $gB_{-}B/B\cap Y_{\gamma}$ is an affine open neighbourhood of $Y_{\gamma}$ around $gB$.
Coordinates for $gB_{-}B/B$ are given by $M_{n,\beta}$. In view of Theorem~\ref{thm:wyserbruhat}, any matrix of $M_{n,\beta}$
representing a flag in $Y_{\gamma}$ must vanish on generators of $I_{\gamma,\beta}$ and conversely, by
Lemma~\ref{lemma:vanish}.
\end{proof}

\begin{Conjecture}
\label{conj:D}
$I_{\gamma,\beta}$ is a radical ideal.
\end{Conjecture}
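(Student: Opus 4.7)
The plan is to deduce radicality of $I_{\gamma,\beta}$ from a Gröbner-basis statement analogous to Theorem~\ref{claim:main}(I), combined with the standard upper-semicontinuity principle: if some initial ideal $\mathrm{init}_\prec(I_{\gamma,\beta})$ is a squarefree monomial ideal, then $I_{\gamma,\beta}$ is radical (cf.~\cite[Section~15.8]{Eisenbud}). So the core task reduces to exhibiting a term order on the free entries of $M_{n,\beta}$ under which the defining minors form a Gröbner basis with squarefree leading terms.

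First I would handle the non-crossing case, where Theorem~\ref{claim:main}(I) already provides a squarefree Gröbner basis for $I_\gamma$ with respect to $\prec_{p,q}$. The patch ideal is obtained by the substitution $M_n \mapsto M_{n,\beta} = L_\beta M_{n,v(\beta)}$, which factors into two steps: specialization $M_n \to M_{n,v(\beta)}$ (setting a subset of variables to $0$ or $1$ according to the permutation matrix of $v(\beta)$), followed by left multiplication by the unipotent matrix $L_\beta$. Geometrically, the first step restricts $M_\gamma$ to the opposite Bruhat cell of $v(\beta)$, and the second is a unipotent change of coordinates. I would try to show that a block-refinement of $\prec_{p,q}$ on the surviving free variables (favoring entries away from the pivots of $v(\beta)$) propagates the Gröbner property across the first step; this kind of argument is closely modeled on the matrix-Schubert patch analyses in \cite{Wyser-13a,Insko.Yong}. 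The unipotent factor $L_\beta$ should then be absorbed by a further adjustment of the term order, so that its effect on leading terms is only to translate monomials by terms that are already smaller in $\prec$.

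For general $\gamma$, I would proceed conditionally on Conjecture~\ref{conj:A}: given a squarefree Gröbner basis for $I_\gamma$, the same specialization argument should carry over to $I_{\gamma,\beta}$. Independently, one could attempt a purely geometric route: since every $Y_\gamma$ is multiplicity-free, it has rational (hence normal, reduced) singularities by \cite[Theorem~6]{Brion} (as used in the proof of Theorem~\ref{thm:K}), so the local ring of $Y_\gamma$ at $F_\bullet^{\beta,v(\beta)}$ is reduced. By Proposition~\ref{prop:patch} this local ring is $(\mathrm{Fun}(M_{n,\beta})/\sqrt{I_{\gamma,\beta}})_{\mathbf 0}$, so radicality would follow from a Hilbert-series calculation matching $I_{\gamma,\beta}$ against $\sqrt{I_{\gamma,\beta}}$; the squarefree Gröbner basis gives exactly such a computation through the associated Stanley--Reisner complex.

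The main obstacle is the interaction between the $L_\beta$-translation and leading terms. The nonzero off-diagonal entries of $L_\beta$ replace certain coordinates by sums $z_{ij} + z_{i'j}$, so upon expansion of a generating minor one obtains extra terms which can in principle cancel the squarefree leading monomial inherited from the non-crossing global case, or merge with lower-order contributions in a way that destroys monomial squarefreeness. Proving combinatorially that a term order can be chosen to prevent such cancellation --- essentially, that each arc of $\beta$ can be accommodated without disturbing the essential-set structure of $u(\gamma)$ and $v(\gamma)$ --- is the real technical hurdle, and I would expect the argument to parallel, but substantially extend, the patch-ideal combinatorics of \cite{Insko.Yong}.
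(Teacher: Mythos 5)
You should first be aware that the paper does not prove this statement: Conjecture~\ref{conj:D} is left open and is supported only by computer verification (all $I_{\gamma,\beta}$ with $\beta\preceq\gamma$ through $p+q=6$, plus further checks with $\beta$ matchless for $n=7$ and a few larger $(p,q)$). So what you have written is a plan whose success would settle an open problem, and as it stands the plan does not close the gap. Your Gr\"{o}bner route needs a squarefree initial ideal for $I_{\gamma,\beta}$, but even for the \emph{global} ideal $I_\gamma$ this is only known in the non-crossing case (Theorem~\ref{claim:main}(I)); for general $\gamma$ it is exactly Conjecture~\ref{conj:A}, itself open, and the paper already exhibits $(3,3)$ clans for which $\prec_{p,q}$ (and its natural modifications) fails, so a ``block refinement of $\prec_{p,q}$'' is delicate before one even touches the patch. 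More seriously, the passage from $M_n$ to $M_{n,\beta}=L_\beta M_{n,v(\beta)}$ is not a variable restriction: it sets some $z_{ij}$ equal to the nonzero constant $1$ and then performs the unipotent translation, and Gr\"{o}bner bases with squarefree lead terms are not stable under such inhomogeneous specializations --- leading monomials of the minors can vanish or merge with lower-order terms. You name this cancellation problem yourself as ``the real technical hurdle,'' but you do not resolve it, and it is essentially the entire content of the conjecture; nothing in \cite{Insko.Yong} or \cite{Wyser-13a} supplies the needed combinatorics for these clan ideals.

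Your second, geometric route is circular. Rational singularities of $Y_\gamma$ (via Brion's multiplicity-freeness argument) do give that the open patch $Y_\gamma\cap gB_-B/B$ is reduced as a scheme, i.e.\ that ${\rm Fun}(M_{n,\beta})/\sqrt{I_{\gamma,\beta}}$ is reduced --- but that holds for the radical of any ideal and says nothing about $I_{\gamma,\beta}$ itself. Proposition~\ref{prop:patch} is only a \emph{set-theoretic} statement; the conjecture is precisely that the listed generators cut out the patch with its reduced structure. The ``Hilbert-series comparison'' you propose to certify $I_{\gamma,\beta}=\sqrt{I_{\gamma,\beta}}$ presupposes the squarefree Gr\"{o}bner basis from the first route, so the two halves of your argument lean on each other. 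If you want a tractable partial result, the honest target is the non-crossing case, where $Y_\gamma$ is a Richardson variety and one could try to match $I_{\gamma,\beta}$ against known reduced local equations of Richardson varieties (Kazhdan--Lusztig-type ideals, cf.\ \cite{WY,KWY}), handling the $L_\beta$ change of coordinates explicitly; but even that is not carried out in the paper and would need real work.
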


Conjecture \ref{conj:D} has been verified for all patch ideals $I_{\gamma,\beta}$ with $\gamma \geq \beta$ through $p+q=6$.
Additionally, it has been verified exhaustively for patch ideals $I_{\gamma,\tau}$ with $\tau$ a \textit{matchless} clan for $n=7$,
as well as for the cases $(p,q) = (2,6)$ and $(3,5)$.  Numerous other successful checks of $I_{\gamma,\tau}$ with
$\tau$ matchless in the case $(p,q) = (4,4)$ have also been performed.
%It seems plausible that the defining generators form a Gr\"{o}bner %basis with squarefree lead terms for some term order
%depending on $\gamma$ and $\tau$. 
%On the other hand, in most computations we are interested in, checking radicalness of the ideal is not a bottleneck.

\subsection{$H$-polynomials and Kazhdan-Lusztig-Vogan polynomials}\label{sec:h-poly}
We propose an analogy between two families of polynomials, one of which 
are the Kazhdan-Lusztig-Vogan (KLV) polynomials.  

Standard references on KLV polynomials are \cite{Vogan,Lusztig-Vogan}.  In their most general form, these
polynomials are indexed by pairs $(Q,\mathcal{L})$ and $(Q',\mathcal{L}')$, where $Q,Q'$ are $K$-orbits on $G/B$,
and $\mathcal{L},\mathcal{L}'$ are $K$-equivariant local systems on $Q,Q'$, respectively.  For the associated
polynomials to be nonzero, the pairs $(Q,\mathcal{L})$ and $(Q',\mathcal{L}')$ must be related in 
\textit{$\mathcal{G}$-Bruhat order}, defined in \cite{Vogan}.  Since all $K$-equivariant local systems on all
orbits are trivial in the example we are considering, for us the KLV polynomials will be indexed simply by pairs
of orbits (or rather, by the corresponding pairs of clans) $\beta,\gamma$ such that 
$\caO_{\beta} \subseteq \overline{\caO_{\gamma}}$.  Furthermore, the
coefficient of $q^i$ in the polynomial $P_{\beta,\gamma}(q)$ measures the dimension of the $2i$-th intersection
homology group of $\overline{\caO_{\gamma}}$ in a neighborhood of a point of $\caO_{\beta}$, as follows from
\cite[Theorem 1.12]{Lusztig-Vogan}. This mirrors the relationship between Schubert varieties and ordinary
Kazhdan-Lusztig polynomials.

Consider the ${\mathbb Z}$-graded Hilbert series of ${\rm gr}_{{\mathfrak m}_p}{\mathcal O}_{p,Z}$, 
the associated graded ring of the local ring ${\mathcal O}_{p,Z}$ of a variety $Z$. This is denoted by 
${\rm Hilb}({\rm gr}_{{\mathfrak m}_p}{\mathcal O}_{p,Z},q)$.
The {\bf $H$-polynomial} $H_{p,Z}(q)$ is defined by
\[{\rm Hilb}({\rm gr}_{{\mathfrak m}_p}{\mathcal O}_{p,Z},q)=\frac{H_{p,Z}(q)}{(1-q)^{\dim Z}},\]
and $H_{p,Z}(1)$ is the {\bf Hilbert-Samuel multiplicity} ${\rm mult}_{p,Z}$.

\begin{Conjecture}
\label{conj:Hpoly}
\begin{itemize}
\item[(i)] ${\rm gr}_{{\mathfrak m}_p}{\mathcal O}_{p,Y_{\gamma}}$ is Cohen-Macaulay.
\item[(ii)] $H_{p,Y_{\gamma}}(q)\in {\mathbb Z}_{\geq 0}[q]$.
\item[(iii)] $H_{p,Y_{\gamma}}(q)\in {\mathbb Z}_{\geq 0}[q]$ is upper-semicontinuous.
\end{itemize}
\end{Conjecture}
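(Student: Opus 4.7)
The plan is to prove all three parts simultaneously by exhibiting a Gr\"{o}bner degeneration of the tangent cone of $Y_\gamma$ at $p$ to the Stanley--Reisner ideal of a shellable simplicial complex, in the spirit of the strategy \cite{Li.Yong} applied to Schubert varieties. By Proposition~\ref{prop:patch} combined with Conjecture~\ref{conj:D}, the completed local ring $\widehat{\mathcal O}_{p,Y_{\gamma}}$ at a representative point $p = F_{\bullet}^{\beta,v(\beta)}$ is cut out scheme-theoretically by the patch ideal $I_{\gamma,\beta}$. The associated graded ring ${\rm gr}_{{\mathfrak m}_p}{\mathcal O}_{p,Y_{\gamma}}$ is then the quotient of the polynomial ring in the $z_{ij}$'s by the ideal ${\rm in}_0(I_{\gamma,\beta})$ of lowest-degree homogeneous parts of its generators, i.e., the tangent cone ideal, so all three statements become questions about ${\rm in}_0(I_{\gamma,\beta})$.

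Next I would Gr\"{o}bner-degenerate ${\rm in}_0(I_{\gamma,\beta})$ via a lexicographic order adapted to $\gamma$ and $\beta$; a $v(\beta)$-shuffled variant of $\prec_{p,q}$ from Section~3 is the natural candidate. The hope, following the paradigm of \cite{Knutson.Miller:annals, KMY}, is that the defining generators already form a Gr\"{o}bner basis with squarefree lead terms after passage to initial forms, producing a Stanley--Reisner ideal $I_{\Delta_{\gamma,\beta}}$. For parts (i) and (ii) it then suffices to show $\Delta_{\gamma,\beta}$ is shellable: Cohen--Macaulayness is preserved under Gr\"{o}bner degeneration, so Cohen--Macaulayness of the Stanley--Reisner ring implies (i), and for a Cohen--Macaulay Stanley--Reisner ring the $H$-polynomial equals the combinatorial $h$-vector of $\Delta_{\gamma,\beta}$, which is nonnegative by standard shelling theory, giving (ii). A combinatorial model for $\Delta_{\gamma,\beta}$, obtained by extending the pipe-dream description of Proposition~\ref{prop:KMY} to incorporate the $L_{\beta}$-correction hidden in $M_{n,\beta}$, should guide the shelling argument.

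For part (iii), upper-semicontinuity of the coefficients of $H_{p,Y_{\gamma}}(q)$ along the stratification by $K$-orbits would follow by a one-parameter family argument: given $\beta \prec \beta'$ in closure order, a $T$-equivariant flat curve in $Y_{\gamma}$ whose generic fiber lies in $\mathcal{O}_{\beta'}$ and whose special fiber is $F_{\bullet}^{\beta,v(\beta)}$ yields coefficient-wise semicontinuity of the numerator polynomial of the Hilbert series of the associated graded rings by flatness. The main obstacle, I expect, is the second step: explicitly identifying $\Delta_{\gamma,\beta}$ and proving its shellability. Unlike the pure Schubert case, one must simultaneously handle three classes of rank conditions --- the $R_+$, $R_-$, and $W(\gamma)$ minors --- and the coordinate matrix $M_{n,\beta}$ carries the nontrivial perturbation $L_{\beta}$, which couples variables across rows and so complicates both the Gr\"{o}bner analysis and any shelling strategy; a successful treatment will likely rest on a clever inductive reduction on the closure order $\beta \preceq \gamma$ that degenerates $L_{\beta}$ away.
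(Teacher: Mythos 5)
You are attempting to prove something the paper itself leaves open: Conjecture~\ref{conj:Hpoly} is stated as a conjecture, supported only by exhaustive computer checks (via the patch equations) through $p+q\leq 7$, plus the remark that (i) implies (ii) and the partial result Theorem~\ref{prop:PHineq}, which handles nonnegativity only for non-crossing $\gamma$ by reducing to Richardson varieties and the vexillary case of \cite{Li.Yong}. So there is no proof in the paper to compare against, and your proposal is a program rather than a proof: every load-bearing step is either open or asserted hopefully. Concretely, (a) you invoke Conjecture~\ref{conj:D} (radicality of the patch ideals), which is itself unproven; (b) the identification of ${\rm gr}_{{\mathfrak m}_p}{\mathcal O}_{p,Y_{\gamma}}$ with the quotient by the lowest-degree forms of the \emph{given generators} of $I_{\gamma,\beta}$ is false in general --- the tangent cone ideal is generated by the initial forms of all elements of the ideal, and controlling this discrepancy is exactly why the paper states Conjecture~\ref{conj:reduced} separately; (c) the hoped-for squarefree Gr\"{o}bner degeneration and the shellability of the resulting complex are precisely the missing content, and the paper's own evidence cuts against an easy version of this: already for crossing clans in $(p,q)=(3,3)$ the natural generators fail to be Gr\"{o}bner for $\prec_{p,q}$, so a term order ``adapted to $\gamma$ and $\beta$'' would have to be constructed case by case, with no candidate combinatorial model for $\Delta_{\gamma,\beta}$ once the $L_{\beta}$-perturbation couples the $R_+$, $R_-$ and $W(\gamma)$ conditions.

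Part (iii) is also not reduced correctly. A $T$-equivariant flat one-parameter family degenerating a point of ${\mathcal O}_{\beta'}$ to $F_{\bullet}^{\beta,v(\beta)}$ does not give coefficientwise inequality of the numerators of the local Hilbert series: flatness controls global invariants of the family, not the graded pieces of ${\rm gr}_{{\mathfrak m}_p}{\mathcal O}_{p,Y_\gamma}$ at the two points, and even the Hilbert--Samuel multiplicity requires a separate semicontinuity theorem rather than bare flatness; the full coefficientwise statement is the genuinely open part of (iii), just as the analogous statement for KLV polynomials required McGovern's recent theorem \cite{McGovern-preprint} and the Schubert-variety analogue in \cite{Li.Yong} was likewise conjectural. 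In short, your outline is a reasonable articulation of the Gr\"{o}bner/Stanley--Reisner strategy one would naturally try, and it correctly locates the hard points, but it does not close any of them, and one of its foundational identifications (generators' initial forms presenting the tangent cone) needs repair before the strategy could even begin.
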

% checked i,ii for (3,2),(2,3),(2,2),(3,3),(4,2), (5,2), (4,3)
% checked iii for (3,3),(2,2),(2,3),(3,2) ,(4,3)
In fact (i) implies (ii), by standard facts from commutative algebra. However, (i) and (ii) seem to be logically independent
of (iii). 

Properties (ii) and (iii) are true for the KLV polynomial $P_{\beta,\gamma}(q)$.  Property (ii) 
follows from \cite[Theorem 1.12]{Lusztig-Vogan}, while property (iii) holds due to recent work of W.M. McGovern
\cite{McGovern-preprint}.  Thus the above conjecture is our rationale for drawing an analogy between
$H_{\beta,\gamma}(q)$ and $P_{\beta,\gamma}(q)$. (Here, $H_{\beta,\gamma}(q)$ is the $H$-polynomial $H_{p,Y_{\gamma}}(q)$
where $p$ is any point of $\caO_{\beta} \subseteq Y_{\gamma}$.)  An analogous
analogy and conjecture was proposed in the Schubert variety setting 
in \cite{Li.Yong}. 

\begin{Theorem} 
\label{prop:PHineq}
If $\gamma$ is non-crossing then $H_{\beta,\gamma}(q)
\in {\mathbb Z}_{\geq 0}[q]$ and 
$P_{\beta,\gamma}(q)\leq 
H_{\beta,\gamma}(q)$ (coefficient-wise inequality).
\end{Theorem}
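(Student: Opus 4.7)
The plan is to reduce to the Schubert variety analogue proved in \cite{Li.Yong}. By \cite{Wyser12} (as invoked in the proof of Theorem~\ref{claim:main}), when $\gamma$ is non-crossing, $Y_\gamma = X_{v(\gamma)}\cap X^{u(\gamma)}$ is a Richardson variety. I would begin by analyzing the patch ideal $I_{\gamma,\beta}$ at a matchless $\beta=\tau$: by exactly the argument that gave $I_\gamma = \widetilde{I}_\gamma$ in the proof of Theorem~\ref{claim:main}, this patch ideal is the sum of two Kazhdan--Lusztig-style patch ideals, one for $X_{v(\gamma)}$ and one for $X^{u(\gamma)}$, in disjoint collections of entries of $M_{n,\tau}$ (the upper-left $p\times n$ block versus the lower-left $q\times n$ block). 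Lemma~\ref{lemma:easygrobner} and Claim~\ref{claim:grobfortilde} then show that the generators are a Gr\"obner basis with squarefree lead terms under $\prec_{p,q}$.

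For the positivity assertion $H_{\tau,\gamma}(q)\in\mathbb{Z}_{\ge 0}[q]$: the Gr\"obner degeneration preserves the Hilbert series, so the $H$-polynomial equals the $h$-polynomial of the associated Stanley--Reisner ring at the origin. Since matrix Schubert varieties and their opposite analogues have Cohen--Macaulay tangent cones at $T$-fixed points (Knutson--Miller, \cite{Knutson.Miller:annals}), and a sum of ideals in disjoint variables corresponds to a tensor product of coordinate rings, the Stanley--Reisner ring is Cohen--Macaulay. Standard Stanley--Reisner theory then delivers nonnegativity of its $h$-vector.

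For the inequality at matchless $\tau$, the tensor-product decomposition of the two local rings gives
\[ H_{\tau,\gamma}(q)\;=\;H_{\tau, X_{v(\gamma)}}(q)\cdot H_{\tau, X^{u(\gamma)}}(q), \]
and an analogous factorization of the KLV polynomial as a product of two ordinary Kazhdan--Lusztig polynomials, using that locally in the chosen affine chart the Richardson variety is a transverse intersection in $GL_n/B$ (so its local intersection cohomology is the tensor product of those of the two factors). The Schubert-case inequality of \cite{Li.Yong}, applied to each factor and multiplied, yields $P_{\tau,\gamma}(q)\le H_{\tau,\gamma}(q)$. For a general (non-matchless) $\beta\preceq\gamma$, the statement follows from the matchless case by upper-semicontinuity as $p$ varies: for $P$ this is known from \cite{McGovern-preprint}, and for $H$ one degenerates a representative $F_\bullet^{\beta,v(\beta)}$ to a $T$-fixed representative $F_\bullet^{\tau,v(\tau)}$ of a minimal matchless $\tau\preceq\beta$, where both ends of the inequality behave semicontinuously.

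The main obstacle will be making rigorous the tensor-product factorization of $H$ and of the KLV polynomial at the level demanded. While geometrically transparent---this is Kleiman-type transversality of the Schubert and opposite Schubert varieties in the opposite big cell---turning it into a precise filtered isomorphism $\operatorname{gr}_{\mathfrak{m}_p}\mathcal{O}_{p,Y_\gamma}\cong \operatorname{gr}_{\mathfrak{m}_p}\mathcal{O}_{p,X_{v(\gamma)}}\otimes \operatorname{gr}_{\mathfrak{m}_p}\mathcal{O}_{p,X^{u(\gamma)}}$, and a corresponding multiplicativity of KLV stalks for the two Richardson factors, is the technical heart of the argument. The patch ideal description in Section~4.2, together with our explicit disjoint-variable decomposition of $I_{\gamma,\tau}$, is exactly the tool needed to execute this decomposition.
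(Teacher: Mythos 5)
Your overall strategy (exploit $Y_\gamma=X_{v(\gamma)}^{u(\gamma)}$ and reduce to the Schubert-variety results of \cite{Li.Yong}) is the same as the paper's, but two of your key steps do not hold up. First, the nonnegativity argument: Cohen--Macaulayness of the tangent cone ${\rm gr}_{{\mathfrak m}_p}{\mathcal O}_{p,\cdot}$ at a $T$-fixed point is not a theorem of \cite{Knutson.Miller:annals}; what they prove concerns matrix Schubert varieties and Gr\"obner limits for a term order, which controls the graded Hilbert series of the affine patch, not the Hilbert series of the associated graded ring with respect to the maximal ideal at the origin unless the patch ideal happens to be homogeneous. In fact CM-ness of these tangent cones is exactly part (i) of Conjecture~\ref{conj:Hpoly} here, and in the Schubert setting it is still open in general; the theorem of \cite{Li.Yong} establishes nonnegativity of $H$ and the bound $P\leq H$ only for (co)vexillary Schubert varieties. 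This is why the paper's proof needs Lemma~\ref{lemma:isvex}: the vexillarity of $u(\gamma)$ and $v(\gamma)$ is what licenses applying \cite{Li.Yong} to each factor, and you never invoke it. The paper's argument is short: by \cite{Wyser12}, $Y_\gamma$ is the Richardson variety $X_{v(\gamma)}^{u(\gamma)}$; by \cite{KWY}, at any point both the KLV polynomial and the $H$-polynomial factor as the products of the corresponding invariants of $X_{v(\gamma)}$ and $X^{u(\gamma)}$; vexillarity plus \cite{Li.Yong} then give nonnegativity and $P\leq H$ for each factor, and both properties are preserved under multiplication. Your plan to prove the local tensor-product factorization yourself via transversality and patch ideals is exactly the step \cite{KWY} already supplies, and you acknowledge it as an unexecuted ``technical heart.''

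Second, your reduction from matchless $\tau$ to general $\beta\preceq\gamma$ via semicontinuity is logically backwards. Upper semicontinuity (even granting it) gives $P_{\beta,\gamma}\leq P_{\tau,\gamma}$ and $H_{\beta,\gamma}\leq H_{\tau,\gamma}$ when specializing from $\beta$ to the more special $\tau$; combined with $P_{\tau,\gamma}\leq H_{\tau,\gamma}$ this says nothing about $P_{\beta,\gamma}$ versus $H_{\beta,\gamma}$, since the inequality does not descend from special points to more generic ones. Worse, semicontinuity of the $H$-polynomial is itself only Conjecture~\ref{conj:Hpoly}(iii) of this paper (the KLV side is \cite{McGovern-preprint}, but that does not rescue the argument), so this step rests on an unproven conjecture in addition to the directional error. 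The remedy is not to work only at matchless points: the local product structure of \cite{KWY} holds at an arbitrary point of the Richardson variety, so the factorization of both $P_{\beta,\gamma}$ and $H_{\beta,\gamma}$ is available for every $\beta\preceq\gamma$ directly, which is how the paper handles general $\beta$ without any semicontinuity.
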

\begin{proof}
When $\gamma$ is non-crossing $Y_{\gamma}=X_{v(\gamma)}^{u(\gamma)}$. The KLV polynomial is the $IH$-Poincar\'{e} polynomial
at a point of $X_{v(\gamma)}^{u(\gamma)}$.
By \cite{KWY}, this is therefore the product of Kazhdan-Lusztig
polynomials for $X_{v(\gamma)}$ and for $X^{u(\gamma)}$.
The same is true for the $H$-polynomial.
However, $v(\gamma)$ and $u(\gamma)$ are vexillary. It is a theorem of 
\cite{Li.Yong} that for the Schubert varieties
involved, the $H$-polynomials have nonnegative coefficients and bound the
Kazhdan-Lusztig polynomials. Nonnegativity and this bound on polynomials 
is preserved by multiplication.
\end{proof}

\begin{Example}
The inequality of Theorem~\ref{prop:PHineq} does not always hold when $\gamma$ is not non-crossing. 
For example, if $\gamma=\begin{picture}(27,10)
\put(0,0){\epsfig{file=matchingsSection4KLV.eps, height=.4cm}}
\put(8,1){\tiny $+$}
\end{picture}$
then $P_{-+++-,\gamma}(q)=1+q^2$, as one can verify using ATLAS (\url{http://www.liegroups.org}).  However, we have $H_{-+++-,\gamma}(q)=1+q$.\qed
\end{Example}

A.~Woo and the first author have found an explicit combinatorial rule for $P_{\beta,\gamma}(q)$
when $\gamma$ is non-crossing.

The following also seems true:

\begin{Conjecture} 
\label{conj:reduced}
${\rm Spec}({\rm gr}_{{\mathfrak m}_p}{\mathcal O}_{p,Y_{\gamma}})$ is reduced.
\end{Conjecture}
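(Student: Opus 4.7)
The plan is to work locally via the patch ideal $I_{\gamma,\beta}$ of Section 4.2, using Proposition~\ref{prop:patch} to identify a local neighborhood of $Y_\gamma$ at $p=F_\bullet^{\beta,v(\beta)}$ with $\mathrm{Spec}(\mathrm{Fun}(M_{n,\beta})/I_{\gamma,\beta})$, placing $p$ at the origin. Let $\omega$ be the weight function giving each coordinate $z_{ij}$ weight $1$, and write $\mathrm{in}_\omega$ for the operation of taking lowest-degree homogeneous components. Then $\mathrm{gr}_{\mathfrak{m}_p}\mathcal{O}_{p,Y_\gamma} \cong \mathbb{C}[z_{ij}]/\mathrm{in}_\omega(I_{\gamma,\beta})$, so Conjecture~\ref{conj:reduced} is equivalent to asserting that $\mathrm{in}_\omega(I_{\gamma,\beta})$ is a radical ideal.

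The approach I would take is to produce a monomial order $\prec$ refining $\omega$ (ties in $\omega$-weight broken by $\prec$) such that $\mathrm{in}_\prec(I_{\gamma,\beta})$ is a squarefree monomial ideal. Since $\mathrm{in}_\prec(I_{\gamma,\beta}) = \mathrm{in}_\prec(\mathrm{in}_\omega(I_{\gamma,\beta}))$, squarefreeness of this iterated degeneration forces $\mathrm{in}_\omega(I_{\gamma,\beta})$ to itself be radical (radicality is preserved under reverse Gr\"obner specialization). So the problem reduces to the combinatorial commutative algebra task of finding such a $\prec$ and exhibiting a Gr\"obner basis of $I_{\gamma,\beta}$ with squarefree leading terms compatible with lowest-degree forms.

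For non-crossing $\gamma$ this should be tractable by exploiting the Richardson decomposition $Y_\gamma = X_{v(\gamma)}\cap X^{u(\gamma)}$ with $u(\gamma), v(\gamma)$ vexillary (Lemma~\ref{lemma:isvex}). At the representative point, $I_{\gamma,\beta}$ becomes the sum of two patch ideals, one for each Schubert variety (analogous to the global decomposition in Claim~\ref{claim:grobfortilde}), so by a variant of Lemma~\ref{lemma:easygrobner} it suffices to establish the squarefree-initial property for the vexillary Schubert patches separately. For such patches at $T$-fixed points this follows from known Gr\"obner results (e.g.\ along the lines of \cite{KMY}, or the tangent cone analyses underlying \cite{Li.Yong}), and the main technical step is to extend from $T$-fixed points to the non-fixed representative points $F_\bullet^{\beta,v(\beta)}$ by means of the $L_\beta$-shift.

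The main obstacle is the general (crossing) case. Here the Richardson structure is unavailable, and we do not even know Conjecture~\ref{conj:D}. The defining generators of $I_{\gamma,\beta}$, particularly the minors coming from the matrices $P_{i,j}$ in (iii), which involve repeated columns of $M_{n,\beta}$, do not obviously form a Gr\"obner basis under any natural order. One possible workaround is to prove Conjecture~\ref{conj:A} first and then attempt to transport the resulting Gr\"obner basis along the non-homogeneous affine change of variables defining $M_{n,\beta}$; however, this shift by $L_\beta v(\beta)$ mixes $\omega$-degrees, so a key sub-problem is to align the two filtrations. Alternatively, one might try to construct the squarefree degeneration directly by a pipe-dream style bijection, using multiplicity-freeness of $[Y_\gamma]$ (cited in the proof of Theorem~\ref{thm:K}) to predict which squarefree monomial ideal should emerge. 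In either case, the principal difficulty is the interplay between the Gr\"obner-theoretic and $\mathfrak{m}$-adic filtrations on $\mathrm{Fun}(M_{n,\beta})$.
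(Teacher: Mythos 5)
You should first be aware that the statement you are proving is not a theorem of the paper: Conjecture~\ref{conj:reduced} is left open there, supported only by exhaustive computational verification (via the patch equations) for $p+q\leq 7$ and some larger cases. So there is no proof in the paper to compare against, and your text does not close the question either: it is a strategy outline whose decisive steps are missing. The reduction itself also needs a correction at the outset. Proposition~\ref{prop:patch} is only a \emph{set-theoretic} statement, so the identification $\mathrm{gr}_{\mathfrak{m}_p}\mathcal{O}_{p,Y_\gamma}\cong \mathbb{C}[z_{ij}]/\mathrm{in}_\omega(I_{\gamma,\beta})$, and hence your claimed equivalence of Conjecture~\ref{conj:reduced} with radicality of $\mathrm{in}_\omega(I_{\gamma,\beta})$, presupposes that $I_{\gamma,\beta}$ is radical near the origin, i.e.\ Conjecture~\ref{conj:D}, which is itself open. (The direction you actually need does survive: a squarefree initial ideal for a refinement of $\omega$ would force both $I_{\gamma,\beta}$ and the tangent cone to be reduced --- but this only underlines that everything hinges on the Gr\"obner step.)

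That Gr\"obner step is exactly what is not supplied, even in the non-crossing case. The results you invoke do not apply as stated: \cite{KMY} concerns the global Schubert determinantal ideals in their natural matrix coordinates, and the tangent-cone/$H$-polynomial analysis of \cite{Li.Yong} is carried out at $T$-fixed points via Kazhdan--Lusztig ideals; your representative points $F_\bullet^{\beta,v(\beta)}$ are $T$-fixed only when $\beta$ is matchless, and the $L_\beta$-shift is an inhomogeneous change of coordinates that scrambles both any candidate term order and the $\omega$-grading --- precisely the ``alignment of filtrations'' difficulty you name but do not resolve. Likewise, the asserted local splitting of $I_{\gamma,\beta}$ into two Schubert patch pieces (so that a variant of Lemma~\ref{lemma:easygrobner} would apply) is plausible in view of $Y_\gamma=X_{v(\gamma)}^{w_0u(\gamma)}$ and the product arguments behind Theorem~\ref{prop:PHineq} and \cite{KWY}, but it is asserted rather than proved, and unlike Claim~\ref{claim:grobfortilde} the two pieces do not sit in disjoint sets of variables after the shift. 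Finally, for crossing $\gamma$ --- where the conjecture actually lives, since the Richardson structure is unavailable --- you offer only possible workarounds contingent on Conjecture~\ref{conj:A} or on an unconstructed pipe-dream-style degeneration. In short, the proposal is a reasonable research plan consistent with the paper's point of view, but it does not constitute a proof of Conjecture~\ref{conj:reduced}, nor does the paper contain one.
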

% checked for (3,3), (4.3)

Using the patch equations one can exhaustively check Conjectures~\ref{conj:Hpoly} and~\ref{conj:reduced}  for all $(p,q)$ where $p+q\leq 7$. We have also done checks for some larger cases. 

\excise{
\subsection{More conjectures}
\comment{AY is willing to possibly just junk this subsection}
McGovern \cite{McGovern} has given a characterization of which $Y_{\gamma}$'s are singular (equivalently in this case, rationally singular). These are those $\gamma$'s that contain at least one of the eight patterns
\[1+-1, 1-+1, 1212, 1+221, 1-221, 122+1, 122-1, 122331.\]

It is natural to ask whether McGovern's pattern avoidance can be used to describe those $Y_{\gamma}$ that globally have any ``reasonable'' property ${\mathcal P}$. Unfortunately, this is not
true for ${\mathcal P}=$``non-Gorenstein'':

\begin{Example}
$Y_{\gamma}$ need not be Gorenstein. 
One checks that $Y_{1+--1}$ and $Y_{1++-1}$ are non-Gorenstein and yet
$Y_{1++--1}$ is Gorenstein, although $1++--1$ contains both $1+--1$ and $1++-1$.\qed
\end{Example}
We do not have any pattern-avoidance type framework that provably
characterizes all reasonable properties. For Schubert varieties, this
is achieved using \emph{interval pattern avoidance} \cite{WY}.

The first author and A.~Woo have given a characterization of Gorenstein $Y_{\gamma}$ when $\gamma$ is non-crossing. 
While the example above shows that pattern avoidance is insufficient
to describe Gorensteinness in general, the following is sufficient to characterize \emph{crossing} $\gamma$ such that $Y_{\gamma}$ is Gorenstein $p+q\leq 8$ \comment{BW is that true?}:
\[
\begin{array}{cccccccc}\nonumber
12+-12 & 12-+12 & 1233+12 & 1233-12 & 12+3312 & 12-3312 & 12334412 & 12343412  \\ \nonumber
1212- & 1212+  & 121233 & 121323   & 123123  & 123132    & 112323  & 123213   \\ \nonumber
1+212  & 1-212 & 121+2 & 121-2 &  122313 & 121332 &  1+23231 &  1-23231 \ \nonumber
\end{array}
\]
It is possible that this list, together with the non-crossing characterization characterizes Gorenstein $Y_{\gamma}$ in general.

We now turn to the study of describing the ${\mathcal P}$-locus:
those points of $Y_{\gamma}$ that possess ${\mathcal P}$.
No conjectural description of the singular locus is available. We only offer:
\begin{Conjecture}
Suppose ${\mathcal O}_{\alpha}$ appears in the maximal singular locus of $Y_{\gamma}$. Then 
\begin{itemize}
\item all arcs $\alpha$ match adjacent vertices
\item the number of  arcs of $\alpha$, and thus the dimension of
${\mathcal O}_{\alpha}$, is weakly bounded above by
the number of arcs of $\gamma$. 
\end{itemize}
\end{Conjecture}

The following conjecture asserts that the non-Gorenstein locus can be determined from the Gorensteinness or non-Gorensteinness of the generic singularities of $Y_{\gamma}$.

\begin{Conjecture}
Suppose the maximal components of the singular locus of $Y_{\gamma}$ are 
${\mathcal O}_{\alpha_1},\ldots, {\mathcal O}_{\alpha_k}$.  
Then ${\mathcal O}_{\beta}$ appears in the non-Gorenstein locus
of $Y_{\gamma}$ if and only if $\beta\leq \alpha_i$ for some $i$ such that $Y_{\gamma}$ is non-Gorenstein along ${\mathcal O}_{\alpha_i}$.
\end{Conjecture}

\comment{I think that there is a way to check Gorensteinness without a grading, by looking at Ext groups.}}

\section*{Appendix}
Below we give the polynomials $\Upsilon_{\gamma}(X;Y)$ for all $\gamma\in {\tt Clans}_{2,2}$.

\begin{center}
\begin{tabular}{|c|l|}
\hline
$\gamma$ & $\Upsilon_{\gamma}(X;Y)$\\
\hline
$--++$ & $(x_2 - y_2) (x_2 - y_1) (x_1 - y_2) (x_1 - y_1)$\\
$-+-+$ & $(x_1 - y_2) (x_1 - y_1) (x_1 - y_4 - y_3 + x_2) (x_2 - y_1 + x_3 - y_2)$\\
$-++-$ &  $(x_1 - y_2) (x_1 - y_1) (-x_1 y_3 + y_4 y_3 + y_3^2  - x_2 y_3 + x_1 x_3 - y_4 x_3$\\
\ &  $- x_3 y_3 + x_2 x_3 + x_2 x_1 - y_4 x_2 - y_4 x_1 + y_4^2)$\\
$+--+$ & $(x_1 - y_4) (x_1 - y_3) (-x_1 y_2 + y_1 y_2 + y_2^2  - x_2 y_2 + x_1 x_3 - x_3 y_1$\\
\ &    $- x_3 y_2 + x_2 x_3 + x_2 x_1 - x_2 y_1 - y_1 x_1 + y_1^2)$\\
$+-+-$ & $(x_1 - y_4) (x_1 - y_3) (x_1 - y_1 - y_2 + x_2) (x_3 - y_3 - y_4 + x_2)$\\
$++--$ & $(x_2 - y_4) (x_2 - y_3) (x_1 - y_4) (x_1 - y_3)$\\
$\begin{picture}(30,10)
\put(10,0){\epsfig{file=matchingsSection4.eps, height=.4cm}}
\put(0,1){$-$}
\put(21,1){$+$}
\end{picture}$ & $(x_1 - y_2) (x_1 - y_1) (x_2 - y_1 + x_3 - y_2)$\\
$\begin{picture}(30,10)
\put(18,0){\epsfig{file=matchingsSection4.eps, height=.4cm}}
\put(0,1){$-+$}
\end{picture}$ & $(x_1 - y_2) (x_1 - y_1) (x_1 - y_4 - y_3 + x_2)$\\
$\begin{picture}(30,12)
\put(0,0){\epsfig{file=matchingsSection4.eps, height=.4cm}}
\put(11,1){$-+$}
\end{picture}$ & $(x_1 - y_4 - y_3 + x_2) (-x_1 y_2 + y_1 y_2 + y_2^2  - x_2 y_2 + x_1 x_3 - x_3 y_1$\\
\ & $- x_3 y_2 + x_2 x_3 + x_2 x_1 - x_2 y_1 - y_1 x_1 + y_1^2)$\\
$\begin{picture}(30,10)
\put(0,0){\epsfig{file=matchingsSection4.eps, height=.4cm}}
\put(11,1){$+-$}
\end{picture}$ & $(x_1 - y_1 - y_2 + x_2) (-x_1 y_3 + y_4 y_3 + y_3^2  - x_2 y_3 + x_1 x_3 - y_4 x_3$\\
\ & $-x_3 y_3 + x_2 x_3 + x_2 x_1 - y_4 x_2 - y_4 x_1 + y_4^2)$\\
%\hline
%\end{tabular}
%\end{center}
%\begin{center}
%\begin{tabular}{|c|l|}
%\hline
%$\gamma$ & $\Upsilon_{\gamma}(X;Y)$\\
%\hline
$\begin{picture}(30,12)
\put(18,0){\epsfig{file=matchingsSection4.eps, height=.4cm}}
\put(0,1){$+-$}
\end{picture}$ & $(x_1 - y_4) (x_1 - y_3) (x_1 - y_1 - y_2 + x_2)$\\
$\begin{picture}(30,10)
\put(10,0){\epsfig{file=matchingsSection4.eps, height=.4cm}}
\put(0,1){$+$}
\put(21,1){$-$}
\end{picture}$ & $(x_1 - y_4) (x_1 - y_3) (x_3 - y_3 - y_4 + x_2)$\\
$\begin{picture}(30,10)
\put(10,0){\epsfig{file=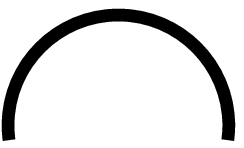, height=.4cm}}
\put(0,1){$-$}
\put(15,1){$+$}
\end{picture}$ & $(x_1 - y_2) (x_1 - y_1)$\\
$\begin{picture}(30,10)
\put(1,0){\epsfig{file=matchingstableA.eps, height=.4cm}}
\put(6,0){$-$}
\put(21,1){$+$}
\end{picture}$ & $-x_1 y_2 + y_1 y_2 + y_2^2  - x_2 y_2 + x_1 x_3 - x_3 y_1
- x_3 y_2 + x_2 x_3 + x_2 x_1 - x_2 y_1 - y_1 x_1 + y_1^2$\\
$\begin{picture}(30,10)
\put(1,0){\epsfig{file=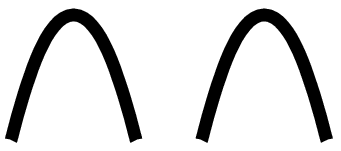, height=.4cm}}
\end{picture}$ & $(x_1 - y_4 - y_3 + x_2) (x_1 - y_1 - y_2 + x_2)$\\
$\begin{picture}(30,10)
\put(1,0){\epsfig{file=matchingstableA.eps, height=.4cm}}
\put(6,0){$+$}
\put(21,1){$-$}
\end{picture}$ & $-x_1 y_3 + y_4 y_3 + y_3^2  - x_2 y_3 + x_1 x_3 - y_4 x_3 - x_3 y_3 + x_2 x_3 + x_2 x_1
     - y_4 x_2 - y_4 x_1 + y_4^2$\\
$\begin{picture}(30,10)
\put(10,0){\epsfig{file=matchingstableA.eps, height=.4cm}}
\put(0,1){$+$}
\put(15,1){$-$}
\end{picture}$ & $(x_1 - y_4) (x_1 - y_3)$\\
$\begin{picture}(30,10)
\put(0,0){\epsfig{file=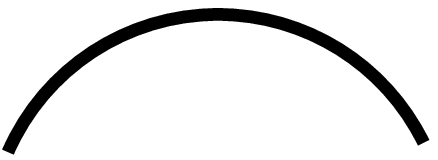, height=.4cm}}
\put(7,1){$-+$}
\end{picture}$ & $x_1 - y_1 - y_2 + x_2$\\
$\begin{picture}(30,10)
\put(5,0){\epsfig{file=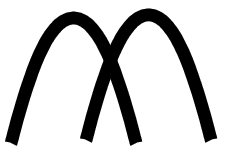, height=.4cm}}
\end{picture}$ & $x_2 - y_1 - y_3 + 2 x_1 - y_4 - y_2 + x_3$\\
$\begin{picture}(30,10)
\put(0,0){\epsfig{file=matchingstableB.eps, height=.4cm}}
\put(7,1){$+-$}
\end{picture}$ & $x_1 - y_4 - y_3 + x_2$\\
$\begin{picture}(30,10)
\put(0,0){\epsfig{file=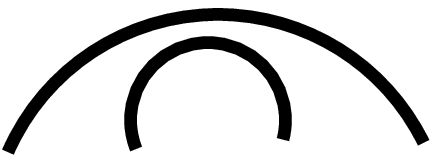, height=.4cm}}
\end{picture}$ & $1$\\
\hline
\end{tabular}
\end{center}

\section*{Acknowledgements}
We wish to thank Bill Graham, Allen Knutson, William McGovern, Oliver Pechenik, 
Hal Schenck, Peter Trapa, Hugh Thomas and Alexander Woo for helpful correspondence. We also thank the anonymous referee for his/her useful suggestions.
AY was supported by NSF grants. This text was completed while AY 
was a Helen Corley Petit scholar at UIUC.


\begin{thebibliography}{9999999999}
\bibitem[AnGrMi08]{AGM} D.~Anderson, S.~Griffeth and
E.~Miller, \emph{Positivity and Kleiman transversality in equivariant K-theory of homogeneous spaces},
J.~European Mathematical Society, vol. 13 (2011), pp. 57--84.
%\bibitem[BaEv94]{BE} D.~Barbasch and S.~Evens, \emph{$K$-orbits on {G}rassmannians and a {PRV} conjecture for real groups}, J. Algebra, {\bf 167}(1994), no. 2, 258--283.
\bibitem[BeBi92]{BB} N.~Bergeron and S.~Billey, \emph{RC-graphs and Schubert poylnomials}, Experimental Math., {\bf 2}(1993), 257--269.
\bibitem[BeGeGe73]{BGG} I.~N.~Bernstein, I.~M.~Gelfand and S.~I.~Gelfand,
\emph{Schubert cells, and the cohomology of the spaces $G/P$},
Uspehi Mat.~Nauk, {\bf 28}(3(171))(1973), 3--26.
\bibitem[Bo53]{Borel} A.~Borel, \emph{Sur la cohomologie des espaces fibr\'{e}s principaux et des espaces
homoge\`{e}nes de groupes de Lie compacts}, Ann.~Math. {\bf 57}(1953), 115--207.
\bibitem[Br01]{Brion} M.~Brion, \emph{On orbit closures of
spherical subgroups in flag varieties}, Comment.~Math.~Helv., {\bf 76}(2) (2001), 263--299.
\bibitem[Ei95]{Eisenbud} D.~Eisenbud, \emph{Commutative algebra with a view toward algebraic geometry}, Springer-Verlag, 1995.
\bibitem[FoGePo97]{FGP} S.~Fomin, S.~Gelfand and A.~Postnikov,
\emph{Quantum Schubert polynomials}, J.~Amer.~Math.~Soc., {\bf 10}(1997), no.~3, 565--596.
\bibitem[FoKi96]{Fomin.Kirillov} S.~Fomin and A.~Kirillov, \emph{The Yang-Baxter equation, symmetric functions,
and Schubert polynomials}, Discrete Math. {\bf 153}(1996), 123--143, Proc. Fifth Conf. Formal Power Series and
Algebraic Combinatorics (Florence, 1993).
\bibitem[FoKi94]{Fomin.Kirillov:groth} \bysame, \emph{Grothendieck polynomials and the Yang-Baxter equation},
Proc. 6th Intern. Conf. on Formal Power Series and Algebraic Combinatorics, DIMACS, 1994, 183--190.
\bibitem[Fu99]{Fulton} W.~Fulton, \emph{With applications to representation theory and geometry}, London Mathematical Society Student Texts, 35. 
Cambridge University Press, Cambridge, 1997.
\bibitem[FuLa94]{Fulton.Lascoux} W.~Fulton and A.~Lascoux, \emph{A Pieri formula in the Grothendieck ring of the
flag bundle}, Duke Math.~J., Vol 76(3)(1994), 711--729.
\bibitem[HaLa07]{Harada} M.~Harada and G.~Landweber,
\emph{Surjectivity for Hamiltonian $G$-spaces in $K$-theory},
Transactions of the AMS {\bf 359} (2007), 6001--6025.
\bibitem[InYo12]{Insko.Yong} E.~Insko and A.~Yong, \emph{Patch ideals and Peterson varieties}, 
Transform. Groups {\bf 17}(2012), no. 4, 1011--1036.
\bibitem[Kn09]{Knutson:multfree} A.~Knutson,
\emph{Frobenius splitting and M\"{o}bius inversion},
preprint 2009. \textsf{arXiv:1209.4146} 
\bibitem[KnMi05]{Knutson.Miller:annals} A.~Knutson and E.~Miller, \emph{Gr\"{o}bner geometry of Schubert polynomials}, Annals.~Math. {\bf 161}(2005), 1245--1318.
\bibitem[KnMiYo09]{KMY} A.~Knutson, E.~Miller and A.~Yong,
\emph{Gr\"{o}bner geometry of vertex decompositions and of flagged tableaux},
J. Reine Angew. Math. {\bf 630} (2009), 1--31.
\bibitem[KnWoYo12]{KWY} A.~Knutson, A.~Woo and A.~Yong, \emph{Singularities of Richardson varieties},
Math. Res. Letters, to appear, 2013. \textsf{arXiv:1209.4146}
\bibitem[KoKu90]{KK} B.~Kostant and S.~Kumar, \emph{$T$-equivariant $K$-theory of generalized flag varieties}, J. Differential Geom. {\bf 32}(1990), no. 2, 549--603.

\bibitem[LaSh82]{Lascoux.Schutzenberger} A.~Lascoux and M.-P. Sch\"{u}tzenberger, \emph{Polyn\^{o}mes de
Schubert}, C.~R.~Acad.~Sci.~Paris S\'{e}r. I Math. {\bf 295}(1982), 629--633.
\bibitem[LeSo03]{Lenart.Sottile} C.~Lenart and
F.~Sottile, \emph{Skew Schubert polynomials},
Proc.~Amer.~Math.~Soc., {\bf 131}(2003), 3319--3328.
\bibitem[LiYo11]{Li.Yong} L.~Li and A.~Yong, \emph{Kazhdan-Lusztig polynomials
and drift configurations}, Algebra and Number Theory J., Vol 5(2011), no.~5, 595--626.
\bibitem[LuVo83]{Lusztig-Vogan} G.~Lusztig and D.~Vogan, \emph{Singularities of closures of {$K$}-orbits on flag manifolds}, Invent. Math. {\bf 71}(1983), no.~2, 365--379.
\bibitem[Ma01]{Manivel} L.~Manivel, \emph{Symmetric functions, Schubert polynomials and degeneracy loci}, American Mathematical
Society, Providence, RI, 2001.
\bibitem[Mat79]{Matsuki} T.~Matsuki, \emph{The orbits of affine symmetric spaces under the action of minimal parabolic subgroups}, J. Math. Soc. Japan, {\bf 31}(1979), no.~2, 331--357.
\bibitem[MaOs90]{Matsuki-Oshima} T.~Matsuki and T.~Oshima,
\emph{Embeddings of discrete series into principal series}. In
``The Orbit method in representation theory (Copenhagen, 1988)'',
volume 82 of Progr. Math., 147--175. Birkh\"{a}user, Boston, MA, 1990.
\bibitem[McGo09]{McGovern} W.~M.~McGovern, \emph{Closures of {$K$}-orbits in the flag variety for {$U(p,q)$}}, J. Algebra {\bf 322}(2009), no.~8, 2709--2712.
\bibitem[McGo13]{McGovern-preprint} \bysame, \emph{Upper semicontinuity of KLV polynomials for certain blocks of Harish-Chandra modules}, preprint, 2013. \textsf{arXiv:1311.0911}
\bibitem[McGoTr09]{McGovern-Trapa} W.~M.~McGovern and P.~Trapa, \emph{Pattern avoidance and smoothness of closures for orbits of a symmetric subgroup in the flag variety}, J. Algebra 
{\bf 322}(2009), no.~8, 2713--2730.
\bibitem[MiSt04]{Miller.Sturmfels} E.~Miller and B.~Sturmfels, \emph{Combinatorial commutative algebra}, Graduate Texts in Mathematics
Vol. {\bf 227}, Springer-Verlag, New York, 2004.
\bibitem[Ri92]{Richardson} R.W.~Richardson, \emph{Intersections of double cosets in algebraic groups}, Indag. Math. (N.S.) {\bf 3}(1992), no.~1, 69-77.
\bibitem[RiSp90]{Richardson-Springer} R.W.~Richardson and T.A.~Springer, \emph{The {B}ruhat order on symmetric varieties}, Geometriae Dedicata {\bf 35}(1990), 389--436.
\bibitem[Sp85]{Springer} T.A.~Springer, \emph{Some results on algebraic groups with involutions}, Algebraic groups and related topics, Vol. 6 of Adv. Stud. Pure Math, 525--543.
\bibitem[Vo83]{Vogan} D.~Vogan, \emph{Irreducible characters of semisimple Lie groups.  III.  Proof of Kazhdan-Lusztig conjecture in the integral case.}, Invent. Math. {\bf 71}(1983), no.~2, 381--417.
\bibitem[WoYo09]{WY} A.~Woo and A.~Yong, \emph{A Gr\"{o}bner basis for Kazhdan-Lusztig ideals},
American Journal of Mathematics {\bf 134}(2012), 1089--1137.
\bibitem[WoYo08]{WY:governing} \bysame, \emph{Governing singularities of Schubert varieties}, J.~Algebra {\bf 320}(2008),
no.~2, 495--520.
\bibitem[Wy13a]{Wyser-13a} B.~Wyser, \emph{$K$-orbit closures on $G/B$ as universal degeneracy loci for flagged vector bundles splitting as direct sums}, preprint, 2013. \textsf{arXiv:1301.1713}
\bibitem[Wy13b]{Wyser-13b} \bysame, \emph{{$K$}-orbit closures on $G/B$ as universal degeneracy loci for flagged vector bundles with symmetric or skew-symmetric bilinear form}, Transform. Groups {\bf 18}(2013), no.~2, 557--594.
\bibitem[Wy12]{Wyser12} \bysame, \emph{Schubert calculus of Richardson varieties stable under spherical Levi subgroups},  J.~Algebraic Combinatorics {\bf 38}(2013), no.~4, 829-850.
\bibitem[WyYo13]{WyYo2} B.~Wyser and A.~Yong \emph{Polynomials for symmetric orbit closures in the flag variety},
preprint, 2013. \textsf{arXiv:1310.7271}
\bibitem[Ya97]{Yamamoto} A.~Yamamoto, \emph{Orbits in the
flag variety and images of the moment map for
classical groups. I.}, Representation Theory, 1:329--404(electronic), 1997.
\end{thebibliography}
\end{document}